\theoremstyle{plain}
\newtheorem{theorem}{Theorem}[section]
\newtheorem{lemma}{Lemma}[section]
\theoremstyle{definition}
\numberwithin{equation}{section}
\begin{document}

\title{Small-mass solutions in a two-dimensional logarithmic
chemotaxis-Navier-Stokes system with indirect nutrient  consumption}

\author{Ai Huang\\
 School of Mathematics and Statistics, Beijing Institute of Technology \\
  Beijing 100081, People's Republic of China\\
  Peter Y.~H.~Pang\\
Department of Mathematics, National University of Singapore\\
10 Lower Kent Ridge Road, Republic of Singapore 119076
\\
Yifu Wang\thanks{
Corresponding author. Email: {\tt wangyifu@bit.edu.cn}}\\
School of Mathematics and Statistics, Beijing
Institute of Technology\\
Beijing 100081, People's Republic of China}

\date{}
\maketitle
\vspace{0.3cm}
\noindent
\begin{abstract}
 This paper is concerned with the singular chemotaxis-fluid system with indirect nutrient consumption:
$
		n_{t}+u\cdot\nabla n=\Delta n-\nabla\cdot(n S(x,n,v)\cdot \nabla v);\ 
		v_{t}+u\cdot\nabla v=\Delta v-vw;\
		w_{t}+u\cdot\nabla w=\Delta w-w+n;\
        u_t+(u\cdot\nabla) u=\Delta u-\nabla P+n\nabla\Phi;\
\nabla\cdot u=0\
$
 in a smooth bounded domain $\Omega\subset\mathbb{R}^2$
  under
no-flux/Neumann/Neumann/Dirichlet boundary conditions, where $\Phi\in W^{2,\infty}(\Omega)$, and
$S: \overline{\Omega}\times [0,\infty) \times (0,\infty)\rightarrow\mathbb{R}^{2\times 2}$
is a suitably smooth function that satisfies
$|S(x,n,v)|\leq
S_0(v)
/v
$
for all
$(x,n,v) \in \Omega\times (0,\infty)^2$
with some nondecreasing $S_0: (0,\infty)\rightarrow(0,\infty)$.

For all reasonably regular initial data with a smallness assumption merely involving  the quantity   $\int_\Omega n_0$,
     it is shown that the problem possesses a  globally bounded  classical solution, which, inter alia,  exponentially stabilizes
    toward the spatially homogeneous state $( \frac{1}{|\Omega|}\int_{\Omega}n_0,0,\frac{1}{|\Omega|}\int_{\Omega}n_0,0)$ with respect to the norm in  $L^\infty(\Omega)$.
   This rigorously confirms that, at least in the two-dimensional setting,  in comparison to the direct mechanism of nutrient consumption, an indirect
mechanism can induce much more regularity of solutions to the chemotaxis--fluid system even with a singular tensor-valued sensitivity.
\end{abstract}

\vspace{0.3cm}
\noindent {\bf\em Key words:}~Chemotaxis, singular sensitivity, indirect consumption mechanism, global existence, asymptotic behavior.

\noindent {\bf\em 2020 Mathematics Subject Classification}:~35K55, 35B40, 35K20, 35Q35, 92C17.

\newpage
\section{Introduction}

Chemotaxis,  the oriented movement of biological cells or organisms in response to an external chemical stimulation,
is known as a fundamental mechanism 
in a variety of biological processes such as pattern formation, wound healing, embryonic development and angiogenesis in tumor progression, and thus has attracted great interest at the level of theoretical understanding (\cite{BBTW,Budrene,Hillen(2009)}).

As  a prototypical version of nutrient-taxis models proposed in the second pioneering work of Keller and Segel (\cite{Keller(1971)}), the chemotaxis-consumption system
\begin{equation}\label{1.1}
\begin{cases}
n_t=\Delta n-\nabla\cdot(nS(x,n,v)\cdot\nabla v),\\
v_t=\Delta v-nv,
\end{cases}
\end{equation}
with the unknown functions   $n=n(x,t)$ and $v=v(x,t)$  representing the density of bacteria and   their nutrient concentration respectively,
has  been extensively studied over the past decades. A particular focus in this study is on the role of the absorptive term
$-nv$ in the $v$-equation toward the possible absence of finite-time mass accumulation. For  instance, when posed in bounded domains in $\mathbb{R}^d$ with arbitrary initial data $(n_0, v_0)$, the Neumann initial-boundary
 value problem of  \eqref{1.1}
 possesses globally bounded classical solutions in the case $d=2$ and global weak solutions when $d=3$ (\cite{Tao(2012)});
 and when $d\geq3$,  globally bounded classical solutions are warranted  whenever $\|v_0\|_{L^{\infty}(\Omega)}$ is suitably small (\cite{Tao{2011}}), but only global generalized solutions are constructed for large initial data (\cite{Winkler(2015)}).

In the prototypical case $S=id$,  \eqref{1.1} enjoys
 certain energy-like features due to  some favorable cancellation of the respective cross-diffusive contributions, which
  leads to a  energy-type inequality and thereby brings about the respective regularity of solutions. However in the case of general matrix-valued  sensitivities, a favorable quasi-energy structure of \eqref{1.1} seems no longer
present and the respective analysis thereof thus becomes significantly more complex (\cite{Winkler(2015),LiSuenWinkler(2015)}).
 Furthermore,   
 if the consumptive chemotaxis model
 incorporates the effect that small changes in a stimulus affect the response of a biological agent more acutely
 at a low signal level
 than that in the presence of high signal concentrations
  (the so-called Weber-Fechner law of stimulus perception),  
   then the sensitivity tensor $S(x,n,v)$ in
  \eqref{1.1}
   exhibits a singular behavior near $v=0$.  
   An early example of this phenomenon is the logarithmic chemotaxis system of the form
      \begin{equation}\label{1.2}
\begin{cases}
n_t=\Delta n-\nabla\cdot(\frac n v \nabla v),\\
v_t=\Delta v-nv,
\end{cases}
\end{equation}
 which exhibits the wave-like structured behavior (\cite{Keller(1971)}).
  Intuitively, the mathematical analysis of \eqref{1.2} is more challenging due to
the fact that  the absorptive
contribution to the second equations 
induces the preference for small values of the attractive signal $v$,
 and accordingly 
 intensifies the destabilizing potential of the taxis interaction, 
resulting in quite complex forms of the structure formation of the biological agents (\cite{Li(2009),Li(2010),Jin(2013),Wang(2013)}).
As far as we know, results on global smooth solvability of \eqref{1.2} seem essentially limited to spatially one-dimensional
problems (\cite{Taowangwang(2013)}) or classes of suitably small initial data (\cite{WXY(2016),Winkler(2016)}). Even in two-dimensional settings
 global solutions can be constructed only within considerably generalized frameworks, but they
can  become eventually smooth and converge to the homogeneous steady state if the initial mass $\|n_0\|_{L^1(\Omega)}$
 is small
  (\cite{Winkler(20163M)}). In particular, under an explicit smallness condition on $n_{0}\ln n_{0}\in L^{1}(\Omega)$ and $\nabla\ln v_{0}\in L^{2}(\Omega)$, \eqref{1.2} possesses a global classical solution (\cite{Winkler(2016)}).
 Apart from that, in the radially symmetric setting,  a normalized solution of \eqref{1.2} was constructed
 in high-dimensional domains without any restriction
on the size of the initial data (\cite{Winkler(2018)}). As for the case of the sub-logarithmic sensitivity, that is $S(x,n,v)=\frac1 {v^\vartheta} id$
with $\vartheta\in (0,1)$, it  turns out that  nevertheless  \eqref{1.1}  admits  a global
 classical solution   under a smallness assumption on $\|v_0\|_{L^\infty(\Omega)}$ (\cite{Viglialoro}),
 and possesses globally  bounded solutions
 whenever
  $\|n_0\|^{\frac 2d}_{L^1(\Omega)}(\|v_0\|_{L^\infty(\Omega)} +\|v_0\|^\theta_{L^\infty(\Omega)})$ with some $\theta>0$  is suitably small (\cite{WinklerDCDSB(2022)}).
\vskip2mm

  Some experimental observation reveals that when the chemotactic motion of bacteria occurs in liquid environments,
bacterial aggregation can be substantially suppressed  by the fluid-driven transport, and vice versa  bacteria  potentially influence the
fluid flow through buoyant forces.  A  prototypical model for the above processes  is given by
\begin{equation}\label{1.3}
\begin{cases}
n_t+u\cdot\nabla n=\Delta n^m-\nabla\cdot(nS(x,n,v)\cdot\nabla v),\\
v_t+u\cdot\nabla v=\Delta v-nv,\\
u_t+ \kappa (u\cdot\nabla) u=\Delta u+n\nabla\Phi-\nabla P,\\
\nabla\cdot u=0,
\end{cases}
\end{equation}
 where the evolution of the fluid velocity $u$ is governed by a (Navier--)Stokes
equation ($\kappa\in \{0,1\}$) with pressure $P$ and gravitational potential
 $\Phi$ (\cite{Tuval(2005)}).
 In order to achieve an understanding of the mutual chemotaxis-fluid interaction,
 considerable efforts have been made to examine
  the balancing effect of porous medium-type diffusion  on the tendency toward cell
accumulation due to chemotactic cross-diffusion  in \eqref{1.3}. For example, the global existence  of bounded weak solutions
 to \eqref{1.3} with $\kappa=0$ and the tensor function $S\in C^2(\overline{\Omega} \times [0,\infty)^2; \mathbb{R}^{3\times 3})$ satisfying
\begin{equation}\label{1.4}
|S(x,n,v)|\leq
S_0(v)\,\, \hbox{for all}\, (x,n,v) \in \Omega\times [0,\infty)^2\, \hbox{with some nondecreasing }\,S_0:[0,\infty)\rightarrow \mathbb{R}
\end{equation}
is warranted when $m>\frac7 6$ (\cite{Winkler(cvpde2015)}), which is subsequently extended to $m>\frac {10}9 $ in \cite{ZhengKe(2021)} and $m>\frac{65}{63}$ in \cite{Tianxiang}, respectively.
In the linear borderline case $m = 1$, only global existence of  possibly
unbounded weak solutions to  \eqref{1.3} is available in the case of $d=3,\kappa=1$ and scalar-valued $S$ (\cite{Winkler Poincare(2016)});
or $d=2,\kappa=0$ and tensor-valued $S$ (\cite{Winkler(JEE2018)});
   moreover, such weak solutions enjoy some regularity features at least after suitably large waiting times
(\cite{WinklerTAMS(2017),Winkler(IMRN2021),Heihoff}). Apart from these,  there are some results on
particular versions of  \eqref{1.3} involving a singular chemotactic sensitivity. For instance,
for a possibly singular sensitivity $S$
  fulfilling
   \begin{equation}\label{1.5}
|S(x,n,v)|\leq \frac{S_0(v)}{v^{\frac 12}}\qquad \hbox{for all}\,\, (x,n,v) \in \Omega\times (0,\infty)^2
\end{equation}
   the no-flux-Dirichlet initial-boundary value problem
   of \eqref{1.3} with $\kappa=0,m\in (1,2]$  admits a globally bounded weak solution
in a smooth bounded convex domain $\Omega\subset\mathbb{R}^3$ (\cite{WinklerANS(2022)}).  In two-dimensional settings
 with the scalar sensitivity $S=\frac 1{v^\vartheta} id$ with $\vartheta\in (0,1)$,
the global classical  solvability of \eqref{1.3} is warranted for small initial data (\cite{LiuM3AS(2023)}); whereas for $S=\frac 1{v} id$,
the system \eqref{1.3} possesses a generalized solution  which is eventually smooth and
approaches the unique spatially homogeneous steady state in the large time limit if the initial data  is appropriately small (\cite{Black(2018),Liu(2021)}),
which  can be removed  
when $-nv$ in the second equation of  \eqref{1.3} is replaced by $-nv^\theta$  with $\theta>1$ (\cite{FuestDCDSB(2023)}) or
the logistic-type kinetic  term is taken into account in the first equation of  \eqref{1.3} (\cite{Peterwangyin}).

In the classical Keller--Segel system,  chemical signals  partially directing the  movement of cells are self-secreted.
However, in some realistic biological circumstances
there are various  kinds of indirect-taxis mechanisms, such as the indirect prey-taxis in predator--prey ecosystems,
where the foraging  predator moves according to the gradient of some
chemical signal  released by the prey
 (see \cite{TelloWrzosek(2017),Strohm(2013),Surulescu(2021)}). In a considerable part of  the  literature it has shown that the  indirect-taxis mechanism can actually bring a substantial change in the rigorous analysis of the related models. (\cite{Fujie(2017),Daicvpde(2023),TaoWinkler(2017),Fuest(2019),Liu Li Huang(2020)}).

Accordingly, an interesting  natural  question is:
when the indirect-taxis mechanism is present, whether the singular chemotaxis-fluid system \eqref{1.3}  possesses
a global classical solution
 displaying a certain favorable decay behavior for all suitably regular initial data,
under a smallness condition  exclusively
involving the initial data through 
 the biologically relevant quantity $\int_{\Omega}n_{0}dx$ at least in the two-dimensional setting.
To this end, the present paper shall 
consider the two-dimensional logarithmic
chemotaxis-Navier-Stokes system  of  the form
\begin{equation}\label{1.6}
\begin{cases}
		n_{t}+u\cdot\nabla n=\Delta n-\nabla\cdot(nS(x,n,v)\cdot\nabla v),
&\quad x\in\Omega,t>0,\\
		v_{t}+u\cdot\nabla v=\Delta v-vw,
&\quad x\in\Omega,t>0,\\
		w_{t}+u\cdot\nabla w=\Delta w-w+n,
&\quad x\in\Omega,t>0,\\
        u_t+(u\cdot \nabla)  u=\Delta u-\nabla P+n\nabla\Phi,
&\quad x\in\Omega,t>0,\\
\nabla\cdot u=0,
&\quad x\in\Omega,t>0,\\
(\nabla n-nS(x,n,v)\cdot\nabla v)\cdot \nu=\nabla v\cdot \nu=\nabla w\cdot \nu=0,u=0,
&\quad x\in\partial\Omega,t>0,\\
n(x,0)=n_0(x),v(x,0)=v_0(x),w(x,0)=w_0(x),u(x,0)=u_0(x),
&\quad x\in\Omega,
	\end{cases}
\end{equation}
where  the  nutrient  $v$ is consumed by the agent  $w$ secreted by the bacteria $n$, rather than by the bacteria directly, in a fluid environment.
Throughout the paper we  will assume that the given gravitational potential function $\Phi$ fulfills
$\Phi\in W^{2,\infty}(\Omega)$ and the chemotactic sensitivity tensor $S=(S_{ij})_{i,j\in \{1,2\}}$ satisfies
\begin{equation}\label{1.7a}
S_{ij}\in C^2(\overline \Omega \times [0,\infty)\times (0,\infty)) \quad \hbox{for}\, i,j\in \{1,2\}
\end{equation}
as well as
\begin{equation}\label{1.7b}
|S(x,n,v)|\leq \frac{S_0(v)}{v}\, \hbox{for all}\, (x,n,v) \in \Omega\times (0,\infty)^2
\hbox{ with some nondecreasing } S_0:[0,\infty)\rightarrow \mathbb{R}.
\end{equation}
As for  the initial data $(n_0,v_0,w_0,u_0)$ in \eqref{1.6}, we shall suppose that
\begin{equation}\label{1.8}
\left\{\begin{array}{l}
n_0\in C^\alpha(\bar{\Omega})\mbox{ for some }\alpha\in(0,1),\mbox{ with }n_0\geq0\mbox{ and }n_0\not\equiv0\mbox{ in }\bar{\Omega},\\
v_0\in W^{1,\infty}(\Omega)\mbox{ with }v_0>0\mbox{ in }\bar{\Omega},\\
w_0\in W^{1,\infty}(\Omega)\mbox{ with }w_0>0\mbox{ in }\bar{\Omega},\\
u_0\in D(A^\beta)\mbox{ for all }\beta\in(\frac1 2,1),
\end{array}
\right.
\end{equation}
where $A=-\mathcal{P}\Delta$ denotes the realization of the Stokes operator in $L^2(\Omega;\mathbb{R}^2)$ defined on the domain $D(A):=W^{2,2}(\Omega;\mathbb{R}^2)\cap W_0^{1,2}(\Omega;\mathbb{R}^2)\cap L_\sigma^{2}(\Omega;\mathbb{R}^2)$, with $L_\sigma^2(\Omega;\mathbb{R}^2):=\{\varphi\in L^{2}(\Omega;\mathbb{R}^2)|\nabla\cdot\varphi=0\}$ and $\mathcal{P}$ being the Helmholtz projection of $L^2(\Omega;\mathbb{R}^2)$ onto $L_\sigma^2(\Omega;\mathbb{R}^2)$, and for $\beta>0$,
$A^{\beta}$  denotes the corresponding sectorial fractional powers.

In this setting,  known results on \eqref {1.6} so far seem restricted to (\cite{Dai Liu(2023)}):
 for any $(n_0, v_0, w_0, u_0)$ satisfying \eqref{1.8},   \eqref {1.6} admits a  global classical solution 
when  $S(x,n,v)=\frac 1v id$.  When $S(x,n,v)$ is not a scalar function but rather attains general values  in $\mathbb{R}^{2\times 2}$,  beyond the  global classical solvability  of \eqref{1.6},
 we intend  to establish the  global boundedness  of
   classical solutions, as well as their  exponentially  stabilization under a smallness assumption merely involving  the total mass
   $\int_\Omega n_0$. Our main result is stated as follows:
\begin{theorem}
	Let $\Omega\subset\mathbb{R}^2$ be a bounded domain with smooth boundary, and assume that $S$ satisfies
\eqref{1.7a}, \eqref{1.7b}. Then for all $K>0$ there is $\delta(K)>0$ with  the property that
 whenever the initial data $(n_{0},v_{0},w_{0},u_0)$ fulfills \eqref{1.8} as well as $\|v_{0}\|_{L^{\infty}(\Omega)}<K$ and
	\begin{equation}\label{1.10}
		\int_{\Omega}n_0< \delta(K),
	\end{equation}
 the system \eqref{1.6}  possesses a global classical solution which is bounded, and
\begin{equation}\label{1.11}
	n(\cdot,t)\to\overline{n_{0}}~\mbox{ in } L^{\infty}(\Omega),\quad v(\cdot,t)\to 0~\mbox{ in } L^{\infty}(\Omega),~~
w(\cdot,t)\to\overline{n_{0}}\mbox{ in } L^{\infty}(\Omega)\end{equation}
as well as
\begin{equation}\label{1.12}
\frac{\nabla v(\cdot,t)}{v(\cdot,t)}\to 0\mbox{ in } L^\infty(\Omega),\quad A^\beta u(\cdot,t)\to 0~\mbox{ in } L^{2}(\Omega)
\end{equation}
 exponentially as $t\to\infty$.
\end{theorem}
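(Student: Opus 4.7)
The plan is to combine a logarithmic transformation that regularizes the singular chemotactic flux with a coupled entropy-energy inequality that closes under the smallness of $\int_\Omega n_0$. First, a standard Banach fixed-point argument yields a maximal classical solution on some interval $[0,T_{\max})$ together with the customary extensibility criterion. Since $\nabla\cdot u=0$ and $w\geq 0$, mass conservation $\int_\Omega n(\cdot,t)=\int_\Omega n_0$ and the pointwise bounds $0<v(\cdot,t)\leq K$ are immediate, and comparison keeps $w$ positive. Setting
\[
z(x,t):=\ln\!\frac{K}{v(x,t)}\geq 0,
\]
the $v$-equation becomes
\[
z_t+u\cdot\nabla z=\Delta z-|\nabla z|^2+w,\qquad \partial_\nu z|_{\partial\Omega}=0,
\]
and the chemotactic flux rewrites as $-\nabla\cdot(nS\cdot\nabla v)=\nabla\cdot(n\tilde S\cdot\nabla z)$ with $\tilde S:=vS$, which by \eqref{1.7b} satisfies $|\tilde S|\leq S_0(K)$ uniformly. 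This removes the singularity entirely.

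Next I would work with a coupled functional of the form
\[
\mathcal{E}(t):=\int_\Omega n\ln n+\kappa_1\int_\Omega|\nabla z|^2+\kappa_2\int_\Omega|u|^2,
\]
with weights $\kappa_1,\kappa_2>0$ to be fixed. Testing the $n$-equation with $\ln n$ produces $-\int|\nabla n|^2/n$ plus the mixed term $\int\nabla n\cdot\tilde S\nabla z$, which by Young's inequality is absorbed up to $\frac{S_0(K)^2}{2}\int n|\nabla z|^2$. Multiplying the $z$-equation by $-\Delta z$ yields the dissipation $\int(\Delta z)^2$ together with the cubic $\int|\nabla z|^2\Delta z$ and couplings coming from $u\cdot\nabla z$ and $w$, while the Navier-Stokes part contributes $-\int|\nabla u|^2+\int n\,u\cdot\nabla\Phi$. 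The dangerous contributions $\int n|\nabla z|^2$, $\int|\nabla z|^4$, and $\int|u|^2|\nabla z|^2$ are controlled by the two-dimensional Gagliardo-Nirenberg inequality
\[
\|\nabla z\|_{L^4}^4\leq C\|\Delta z\|_{L^2}^2\|\nabla z\|_{L^2}^2+C\|\nabla z\|_{L^2}^4,
\]
combined, for the cross-term $\int n|\nabla z|^2$, with a Trudinger-Moser/logarithmic Sobolev estimate that relies precisely on $\int_\Omega n$ being small. This is where the threshold $\delta(K)$ emerges: requiring $\int_\Omega n_0<\delta(K)$ renders all bad contributions absorbable, so $\mathcal{E}$ satisfies a differential inequality of the form $\mathcal{E}'+c\mathcal{D}\leq 0$ with $\mathcal{D}:=\int|\nabla n|^2/n+\int(\Delta z)^2+\int|\nabla u|^2$, and thus stays uniformly bounded while $\mathcal{D}$ is time-integrable.

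With $\mathcal{E}$ uniformly bounded, a bootstrap based on heat-semigroup smoothing for $n$, $v$, $w$ and Stokes-semigroup smoothing for $u$ successively yields $\|n\|_{L^p}$, $\|w\|_{L^\infty}$, $\|\nabla v\|_{L^\infty}$, $\|A^\beta u\|_{L^2}$, and finally $\|n\|_{L^\infty}$ uniformly in $t$, thereby excluding $T_{\max}<\infty$. For the asymptotic statement, time-integrability of $\mathcal{D}$ together with Poincar\'e-type inequalities for the mean-zero quantities $n-\overline{n_0}$, $w-\overline{n_0}$, $u$ first yields exponential $L^2$ decay, which semigroup smoothing upgrades to $L^\infty$; once $w(\cdot,t)\to\overline{n_0}>0$ uniformly, a comparison argument applied to the $v$-equation furnishes the exponential decay of $v$ in $L^\infty$, and parabolic regularity together with the control of $\|\Delta z\|_{L^2}$ gives $\|\nabla v/v\|_{L^\infty}=\|\nabla z\|_{L^\infty}\to 0$. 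The \textbf{main obstacle} I anticipate is the second step: engineering the coupled energy so that the cross-terms generated by the singular taxis, the viscous Hamilton-Jacobi nonlinearity $|\nabla z|^2$ in the transformed $v$-equation, and the advective coupling to the Navier-Stokes flow can all be absorbed simultaneously under one common mass-smallness threshold depending \emph{only} on $K$; this requires careful weighting of $\kappa_1,\kappa_2$ and sharp use of the two-dimensional Gagliardo-Nirenberg/Moser-Trudinger framework.
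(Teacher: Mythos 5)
Your overall roadmap (logarithmic change of variables, a coupled entropy--gradient--kinetic-energy functional, Gagliardo--Nirenberg and Moser--Trudinger to absorb cross-terms, then a semigroup bootstrap for $L^\infty$ bounds and decay) matches the architecture of the paper's proof in outline, but there are two substantive gaps that prevent your version of the argument from closing.

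First, your functional $\mathcal{E}=\int n\ln n+\kappa_1\int|\nabla z|^2+\kappa_2\int|u|^2$ omits the indirect-signal component. Testing $z_t+u\cdot\nabla z=\Delta z-|\nabla z|^2+w$ with $-\Delta z$ produces the source term $-\int (w-\overline w)\,\Delta z$, and the $u$-equation is forced by $\int n\,u\cdot\nabla\Phi$ whose control requires $\int(n-m)^2$. Neither of these can be relaxed into your $\mathcal{E}$ without the term $\int(w-\overline w)^2$ present in the Lyapunov functional; the paper's $\mathcal{F}$ carries exactly this extra block, and its evolution is what ties the $n$-, $z$-, $w$-, $u$-equations together into a single dissipation inequality.

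Second, and more fundamentally, you assert that the smallness of $\int_\Omega n_0$ alone makes the differential inequality $\mathcal{E}'+c\mathcal{D}\le 0$ hold globally. That cannot be correct for the given class of initial data. The viscous Hamilton--Jacobi nonlinearity produces the cubic term $\int|\nabla z|^2\Delta z$, which after Gagliardo--Nirenberg yields a contribution of the form $\bigl(C\|\nabla z\|_{L^2}^2\bigr)\int|\Delta z|^2$; absorbing this into the left-hand dissipation $\int|\Delta z|^2$ requires $\|\nabla z(\cdot,t)\|_{L^2}^2$ itself to be small, not $\int n_0$. Hypothesis \eqref{1.8} only demands $v_0\in W^{1,\infty}$ with $v_0>0$, so $\|\nabla z_0\|_{L^2}$ (and likewise $\|w_0\|_{L^2}$, $\|u_0\|_{L^2}$) can be arbitrarily large; the energy inequality is therefore \emph{conditional}, valid only once the full functional is already below a threshold. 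The paper resolves this by first establishing temporal-average estimates (Lemmas 3.1--3.4, exploiting only $\int n_0$ small) of the form
\begin{equation*}
\frac{1}{t_2-t_1}\int_{t_1}^{t_2}\mathcal{F}(t)\,dt\le \delta,
\end{equation*}
which produces a single time $t_0^*$ at which $\mathcal{F}(t_0^*)<\delta$; only thereafter does the conditional Lyapunov inequality (Lemma 3.5) propagate smallness and deliver the exponential decay. Your proposal skips this waiting-time mechanism entirely, and without it the argument breaks at the very first step. A smaller technical omission: the boundary condition $(\nabla n - nS\cdot\nabla v)\cdot\nu=0$ is nonlinear, so the Banach fixed-point and the energy testing cannot be applied directly; the paper first passes to the regularized system \eqref{2.4} via $S_\varepsilon=\rho_\varepsilon S$, which restores a homogeneous Neumann condition for $n_\varepsilon$, carries out all estimates $\varepsilon$-uniformly, and then sends $\varepsilon\to0$.
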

\vskip3mm
As compared to the case of  scalar sensitivities, a more delicate analysis  is required for tensor-valued ones
with regard to qualitative properties of solutions for the problem \eqref{1.6}.  In the present situation,
 the global boundedness of classical  solutions of  \eqref{1.6} is  based on  a priori information on
  approximate solutions $(n_\varepsilon, v_\varepsilon, w_\varepsilon,  u_\varepsilon, P_\varepsilon) $
  of the appropriately regularized problem \eqref{2.4}. The analysis of \eqref{2.4} relies on the evolution of the quantity
    \begin{equation*}
\mathcal{F}(n_\varepsilon,z_\varepsilon,w_\varepsilon,u_\varepsilon)(t)
:=\int_\Omega n_\varepsilon\ln\frac{ n_\varepsilon}{\overline{n_0}}
+\frac1 2\int_\Omega|\nabla z_\varepsilon|^2
+\int_\Omega(w_\varepsilon-\overline{w_\varepsilon})^2+\int_\Omega |u_\varepsilon|^2.
\end{equation*}
with $z_\varepsilon(x,t):=-\ln\frac{v_\varepsilon(x,t)}{\|v_{0}\|_{L^{\infty}(\Omega)}}$.
In fact, thanks to the  Moser--Trudinger inequality (Lemma \ref{lemma2.4}), $\mathcal{F}(n_\varepsilon,z_\varepsilon,w_\varepsilon,u_\varepsilon)(t)$
will become an energy functional after time $t_0^*>0$  under the smallness condition involving
$\int_\Omega n_0$ (Lemma \ref{lem3.6}).  Our strategy is to resort to a priori estimates for  the temporal average
$$
\frac1{t_2-t_1}\int_{t_1}^{t_2}
\int_\Omega (n\ln\frac n{\overline{n_0}}+
|\nabla z|^2+
(w-\overline{w})^2+
|u|^2)
$$
instead of  $
\frac1{t}\int_0^t
\int_\Omega (n\ln\frac n{\overline{n_0}}+
|\nabla z|^2+
(w-\overline{w})^2+
|u|^2),
$ which is different from  \cite{Black(2018),Liu(2021),Winkler(2020)}.
Thereupon, we use the resulting a priori bounds and decay properties of the approximate solutions
to establish regularity in a large time regime $t^*_2>0$ 
through a bootstrap procedure (Lemma \ref{lem3.7}--Lemma \ref{lem3.10}).
It is the $\varepsilon$-independence of   $t^*_2>0$ that makes it possible to achieve a priori bounds for
some higher regularity  of the approximate solutions on the interval $(0, t^*_2)$.
Thereafter,  by way of compactness arguments,  this enables us to complete the first part of our
 main result that asserts
the global existence of  bounded classical solutions to \eqref{1.6},  along
with some decay properties (Lemma \ref{lemma3.11}). Furthermore,  on the basis of the
 decay properties of the global classical solution $(n,v,w,u)$, the exponential convergence properties asserted in the last part of Theorem 1.1
can be shown by employing a series of suitably organized bootstrap arguments.

The rest of this paper is organized as follows: In Section 2, we
 lay down some basic results and also recall some known properties of the Neumann heat semigroup and variants of the
  Moser--Trudinger inequality.
In Section 3, we employ a conditional energy functional $\mathcal{F}(t)$ to establish  a priori estimates
for approximate solutions  of \eqref{1.6}, and thereby obtain the global existence of  bounded classical solutions to \eqref{1.6}
along with favorable decay properties.  In
Section 4, the exponential convergence of classical solutions to \eqref{1.6}  is proved by means of  bootstrap arguments.


\section{Preliminaries}
In this section, we provide some preliminary results that will be used in the subsequent sections. We begin by recalling the important $L^p- L^q $ estimates for the  heat semigroup
under Neumann boundary conditions as given in \cite{Winkler(2010),CaoLankeit}.

\begin{lemma}\label{lemma2.1} 
	Let $(e^{t\Delta})_{t\geq0}$ be the Neumann heat semigroup in $\Omega$, and let $\lambda_{1}>0$ denote the first nonzero eigenvalue of $-\Delta$ in $\Omega$ under Neumann boundary conditions. Then there exist constants $c_{1},\dots,c_{4}$  depending upon $\Omega$  only which have the following properties.
\\
	(i) If $1\leq q\leq p\leq\infty$, then
	$$\|e^{t\Delta}\omega\|_{L^{p}(\Omega)}\leq c_{1}(1+t^{-\frac{n}{2}(\frac{1}{q}-\frac{1}{p})})e^{-\lambda_{1}t}\|\omega\|_{L^{q}(\Omega)}~\mbox{ for all } t>0$$
	holds for all $\omega\in L^{q}(\Omega)$ satisfying $\int_{\Omega}\omega=0.$\\
	(ii) If $1\leq q\leq p\leq\infty$, then
	$$\|\nabla e^{t\Delta}\omega\|_{L^{p}(\Omega)}\leq c_{2}(1+t^{-\frac{1}{2}-\frac{n}{2}(\frac{1}{q}-\frac{1}{p})})e^{-\lambda_{1}t}\|\omega\|_{L^{q}(\Omega)}~\mbox{ for all } t>0$$
	holds for each $\omega\in L^{q}(\Omega)$.\\
	(iii) If $2<q\leq p<\infty$, then
	$$\|\nabla e^{t\Delta}\omega\|_{L^{p}(\Omega)}\leq c_3(1+t^{-\frac{n}{2}(\frac{1}{q}-\frac{1}{p})})e^{-\lambda_{1}t}\| \nabla\omega\|_{L^{q}(\Omega)}~\mbox{ for all } t>0$$
	holds for each $\omega\in W^{1,q}(\Omega)$.\\
(iv) If $1<q\leq p\leq\infty$, then
	$$\|e^{t\Delta}\nabla\cdot\omega\|_{L^{p}(\Omega)}\leq c_{4}(1+t^{-\frac 12-\frac{n}{2}(\frac{1}{q}-\frac{1}{p})})e^{-\lambda_{1}t}\|\omega\|_{L^{q}(\Omega)}~\mbox{ for all } t>0$$
	holds for all $\omega\in (L^q(\Omega))^n$.
\end{lemma}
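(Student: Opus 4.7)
The plan is to prove all four assertions by a two-regime analysis: pointwise Gaussian heat kernel bounds will handle short times, while the spectral gap of the Neumann Laplacian will handle long times. The central ingredients are upper bounds of the form $p(t,x,y)\leq Ct^{-n/2}\exp(-c|x-y|^{2}/t)$ on the Neumann heat kernel over $\Omega$ for $t\in(0,1]$, together with the analogous gradient kernel estimate $|\nabla_{x}p(t,x,y)|\leq Ct^{-1/2-n/2}\exp(-c|x-y|^{2}/t)$, and the fact that under Neumann boundary conditions $-\Delta$ has discrete nonnegative spectrum $0=\mu_{0}<\lambda_{1}\leq\mu_{2}\leq\cdots$ with positive spectral gap $\lambda_{1}>0$.

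For small $t\in(0,1]$, the Gaussian bound above --- obtained by a parametrix or reflection construction against the whole-space Gaussian, or by De~Giorgi--Nash--Moser theory in conjunction with the $C^{1,\alpha}$ regularity of $\partial\Omega$ --- produces, via Young's convolution inequality applied to the kernel, the ultracontractive estimate $\|e^{t\Delta}\omega\|_{L^{p}(\Omega)}\leq Ct^{-\frac{n}{2}(\frac{1}{q}-\frac{1}{p})}\|\omega\|_{L^{q}(\Omega)}$. The gradient kernel bound supplies the extra $t^{-1/2}$ factor required for (ii), and (iv) follows by duality in $(L^{p'},L^{q'})$, using that $e^{t\Delta}$ is self-adjoint and that the Neumann condition on $e^{t\Delta}\varphi$ annihilates the boundary terms arising from integrating $\nabla\cdot\omega$ by parts.

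For large $t>1$, the device is the semigroup decomposition $e^{t\Delta}=e^{(t-1)\Delta}\circ e^{\Delta}$: the factor $e^{\Delta}$ performs the $L^{q}\to L^{p}$ smoothing with an absolute constant supplied by the previous paragraph, while the factor $e^{(t-1)\Delta}$ carries the exponential decay. Decomposing $\omega=\bar{\omega}\mathbf{1}+(\omega-\bar{\omega}\mathbf{1})$ with $\bar{\omega}:=|\Omega|^{-1}\int_{\Omega}\omega$, the identity $e^{s\Delta}\bar{\omega}\mathbf{1}=\bar{\omega}\mathbf{1}$ together with the spectral gap yields, by spectral calculus, $\|e^{s\Delta}(\omega-\bar{\omega}\mathbf{1})\|_{L^{2}(\Omega)}\leq e^{-\lambda_{1}s}\|\omega-\bar{\omega}\mathbf{1}\|_{L^{2}(\Omega)}$; bootstrapping this through the smoothing of $e^{\Delta}$ upgrades it to the desired $L^{p}$--$L^{q}$ bound with weight $e^{-\lambda_{1}s}$. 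In (i) the hypothesis $\int_{\Omega}\omega=0$ deletes the constant part altogether; in (ii) and (iv) the outer differentiation annihilates constants automatically, so in both cases only the mean-zero component survives and the exponential factor appears uniformly in $t$.

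The main obstacle will be item (iii), where the regularity of $\omega$ is provided through $\|\nabla\omega\|_{L^{q}}$ rather than $\|\omega\|_{L^{q}}$. A direct commutation of $\nabla$ past $e^{t\Delta}$ fails because $\nabla\omega$ need not satisfy the Neumann compatibility required by the semigroup, so the naive route of writing $\nabla e^{t\Delta}\omega=e^{t\Delta}\nabla\omega$ is unavailable. The restriction $q>2$ is precisely the range in which an integration-by-parts representation of $\nabla e^{t\Delta}\omega$ in terms of the gradient kernel paired against $\nabla\omega$ remains absolutely integrable up to $\partial\Omega$, so that the boundary traces can be controlled; this is the refinement developed in \cite{CaoLankeit}. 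Once the short-time version of (iii) has been secured by this kernel argument, the same splitting $e^{t\Delta}=e^{(t-1)\Delta}\circ e^{\Delta}$ together with the spectral gap extends the bound to all $t>0$ and produces the exponential weight $e^{-\lambda_{1}t}$.
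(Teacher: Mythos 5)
The paper offers no proof of this lemma at all: it is quoted as a known result with a pointer to \cite{Winkler(2010),CaoLankeit}, so the comparison here is really with the proofs in those references. Your two-regime strategy --- Gaussian upper bounds on the Neumann heat kernel (and its gradient) for $t\le 1$ giving the ultracontractive rates, plus the spectral gap $\lambda_1>0$ acting on the mean-zero component for $t>1$, glued by the semigroup property --- is exactly the standard argument and is correct in outline for (i), (ii) and (iv). One small imprecision: to convert the $L^2$ spectral-gap decay into an $L^q$--$L^p$ bound you need the three-factor splitting $e^{t\Delta}=e^{\Delta}\circ e^{(t-2)\Delta}\circ e^{\Delta}$ (one smoothing step into $L^2$, decay in $L^2$, one smoothing step out), not the two-factor one you wrote; your word ``bootstrapping'' suggests you know this, but as written the outer factor $e^{(t-1)\Delta}$ would have to decay on $L^p$ rather than $L^2$.

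Two of your justifications do not match how the argument actually runs. In (iv), the boundary term produced by integrating $\int_\Omega(\nabla\cdot\omega)\,e^{t\Delta}\varphi$ by parts is $\int_{\partial\Omega}(\omega\cdot\nu)\,e^{t\Delta}\varphi$; it involves the normal trace of $\omega$, and is \emph{not} killed by the Neumann condition $\partial_\nu e^{t\Delta}\varphi=0$. The correct statement is that $e^{t\Delta}\nabla\cdot$ is first defined on compactly supported (or tangential) vector fields, where this boundary term is absent, and then extended to all of $(L^q(\Omega))^n$ by density using the bound obtained by duality from (ii). In (iii), the restriction $q>2$ does not come from integrability of a kernel pairing up to the boundary; in the cited proofs the case $p=q$ is obtained by differentiating $\int_\Omega|\nabla e^{t\Delta}\omega|^q$ in time and exploiting a pointwise Bochner-type identity together with control of the boundary term $\partial_\nu|\nabla u|^2$ via trace embeddings, and it is this testing procedure that requires $q\ge 2$; the case $q<p$ then follows by composing with (ii). Neither issue invalidates your overall plan --- the statements are true and the two-regime architecture is the right one --- but as written these two steps would not go through in the form you describe.
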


\begin{lemma}\label{lemma2.2}
	The Stokes operator $A$  is sectorial and generates an analytical semigroup semigroup $(e^{-tA})_{t\geq0}$ in
$L_\sigma^{p}(\Omega;\mathbb{R}^2)$. If we fix $0<\mu<\inf Re \sigma(A)$ where  $\sigma(A)$ denotes  the spectrum of $A$, then for any $p>1$ and $\beta>0$ there is $c(p,\beta)>0$ such that
	$$\|A^{\beta}e^{-tA}u\|_{L^{p}(\Omega)}\leq c(p,\beta)t^{-\beta}e^{-\mu t}\|u\|_{L^{p}(\Omega)}$$
	 for any $u\in L_\sigma^{p}(\Omega)$.
\end{lemma}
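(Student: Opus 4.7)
The plan is to derive the lemma by combining the classical $L^p$-theory of the Stokes operator on bounded smooth domains with the standard functional calculus of sectorial operators, and then to recover the exponential decay factor through a spectral-shift argument.

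First, I would invoke the fundamental result of Giga (and Giga--Miyakawa) that on a smooth bounded domain, the Helmholtz projection $\mathcal{P}$ is bounded on $L^p(\Omega;\mathbb{R}^2)$ for every $1<p<\infty$, and that the Stokes operator $A=-\mathcal{P}\Delta$, viewed on the natural $L^p$-analogue of the domain $D(A)$ stated in the paper, is densely defined, closed, and sectorial on $L^p_\sigma(\Omega;\mathbb{R}^2)$. Sectoriality then immediately gives, through the standard theory of analytic semigroups (Pazy, Henry), both the generation of an analytic semigroup $(e^{-tA})_{t\geq 0}$ and the short-time estimate
$$\|A^{\beta}e^{-tA}u\|_{L^p(\Omega)} \leq C_0(p,\beta)\, t^{-\beta}\, \|u\|_{L^p(\Omega)}\qquad\text{for all } t\in(0,1],$$
obtained by differentiating the Dunford representation of $e^{-tA}$ along a sector-avoiding contour.

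For the exponential decay I would use a spectral-gap argument. Because $\Omega$ is bounded and smooth, the resolvent of $A$ is compact via Rellich--Kondrachov, so $\sigma(A)$ consists of isolated eigenvalues of finite multiplicity; the $p$-independence of the Stokes spectrum on bounded domains (Giga, Borchers--Sohr) reduces the location of $\sigma(A)$ to the $L^2$-case, where $A$ is self-adjoint and strictly positive, giving $\inf\text{Re}\,\sigma(A)>0$. For any fixed $\mu$ below this infimum, the shifted operator $A_\mu:=A-\mu I$ remains sectorial, and the semigroup $(e^{-tA_\mu})_{t\geq 0}=(e^{\mu t}e^{-tA})_{t\geq 0}$ is uniformly bounded on $L^p_\sigma$. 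Applying the short-time bound to $A_\mu$ in place of $A$, and handling $t\geq 1$ via the identity $e^{-tA_\mu}=e^{-A_\mu}e^{-(t-1)A_\mu}$ together with the uniform boundedness of $(e^{-tA_\mu})_{t\geq 0}$ and of $A^{\beta}e^{-A_\mu}$, one obtains
$$\|A^{\beta}e^{-tA_\mu}u\|_{L^p(\Omega)} \leq c(p,\beta)\, t^{-\beta}\, \|u\|_{L^p(\Omega)}\qquad\text{for all } t>0,$$
and multiplying by $e^{-\mu t}$ yields the stated inequality.

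The main technical obstacle will be the $p$-independence of $\sigma(A)$: for $p=2$ everything reduces to elementary Hilbert-space spectral theory, but for $p\neq 2$ one must invoke nontrivial resolvent estimates and the boundedness of the imaginary powers of $A$ to exclude spurious spectrum. In a paper of this style it is natural to cite the standard references (Giga's 1981 and 1985 papers and Sohr's monograph \emph{The Navier--Stokes Equations}) rather than to reproduce these arguments.
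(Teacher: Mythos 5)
The paper does not prove this lemma at all: it is recalled as a standard fact about the Stokes semigroup, so your task was essentially to reconstruct the classical argument, and your outline does so correctly and along the expected lines (Giga/Giga--Miyakawa/Sohr for sectoriality of $A$ on $L^p_\sigma$ and the $p$-independence of the spectrum, self-adjoint positivity on $L^2$ for the spectral gap, and the Dunford calculus for the $t^{-\beta}$ singularity). One small repair is needed in your large-time step: for $t\geq 1$ the factorization $A^{\beta}e^{-tA_\mu}=A^{\beta}e^{-A_\mu}e^{-(t-1)A_\mu}$ together with uniform boundedness of the shifted semigroup only yields $\|A^{\beta}e^{-tA_\mu}u\|\leq C\|u\|$, hence $\|A^{\beta}e^{-tA}u\|\leq Ce^{-\mu t}\|u\|$, which is \emph{weaker} than the claimed $Ct^{-\beta}e^{-\mu t}\|u\|$ since $t^{-\beta}\to 0$. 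The fix is to use the strictness of $\mu<\inf\mathrm{Re}\,\sigma(A)$: shift instead by some $\mu'\in(\mu,\inf\mathrm{Re}\,\sigma(A))$, obtain $\|A^{\beta}e^{-tA}u\|\leq Ce^{-\mu' t}\|u\|$ for $t\geq 1$, and absorb $e^{-(\mu'-\mu)t}\leq C_{\beta}t^{-\beta}$ into the algebraic factor. With that adjustment the proposal is a complete and standard proof, consistent with the references the paper implicitly relies on.
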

 The following two inequalities play an important role in the derivation of the associated  functional inequality and thereby  the global boundedness of solutions under consideration. Their proofs follow from the Moser--Trudinger inequality,
and we refer the interested reader to  \cite{Winkler(2020)} for the details.

\begin{lemma}\label{lemma2.4}
	Suppose that $\Omega \subset\mathbb{R}^{2}$ is a smooth bounded domain. Then
for all $\varepsilon>0$
there exists $M=M (\varepsilon, \Omega)>0$ such that if $0 \not\equiv \phi \in C^{0}(\overline\Omega )$ is nonnegative and $\psi\in W^{1,2}(\Omega)$, then for each $a>0$,
\begin{equation}\label{2.1}
		\begin{split}
			\int_{\Omega}\phi | \psi | &\leq \frac{1}{a}\int_{\Omega}\phi \ln\frac{ \phi}{\overline{\phi}}+ \frac{(1 + \varepsilon )a}{8\pi} \cdot \biggl\{\int_{\Omega}\phi \biggr\} \cdot\int_{\Omega} | \nabla \psi |^{2}\\
			&\quad+M a\cdot \biggl\{\int_{\Omega} \phi \biggr\} \cdot \biggl\{ \int_{\Omega}| \psi | \biggr\}^{2}+\frac{M}{a}\int_{\Omega}\phi ,
		\end{split}
	\end{equation}
and
	\begin{equation}\label{2.2}
		\begin{split}		 \int_{\Omega}\psi\ln(\psi+1)&
\leq\frac{1+\varepsilon}{2\pi}
\biggl\{\int_{\Omega}\psi\biggr\}\cdot\int_{\Omega}\frac{|\nabla\psi|^{2}}{(\psi+1)^{2}}
\\			 &+4M\cdot\biggl\{\int_{\Omega}\psi\biggr\}^{3}+
\biggl\{M-\ln\big\{\frac{1}{|\Omega|}\int_{\Omega}\psi\big\}\biggr\}\cdot\int_{\Omega}\psi
		\end{split}
	\end{equation}
where $\overline{\phi}:= \frac{1}{|\Omega |}\int_{\Omega} \phi$.
\end{lemma}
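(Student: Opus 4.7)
The central tool for both \eqref{2.1} and \eqref{2.2} is Cherrier's sharp Moser--Trudinger inequality on bounded domains in $\mathbb{R}^{2}$: for $\chi\in W^{1,2}(\Omega)$ with $\overline{\chi}=0$ one has $\int_{\Omega}e^{2\pi\chi^{2}/\|\nabla\chi\|_{L^{2}}^{2}}\le C|\Omega|$, the sharp exponent being $2\pi$ rather than the Dirichlet value $4\pi$ because the boundary condition has been dropped. Completing the square via $t\chi\le \frac{t^{2}\|\nabla\chi\|^{2}}{8\pi}+\frac{2\pi\chi^{2}}{\|\nabla\chi\|^{2}}$ turns this into the linearized form
\[
\int_{\Omega}e^{t(\chi-\overline{\chi})}\,dx \;\le\; C_{\varepsilon}|\Omega|\exp\!\left(\frac{(1+\varepsilon)t^{2}}{8\pi}\int_{\Omega}|\nabla\chi|^{2}\right),\qquad t>0,\ \chi\in W^{1,2}(\Omega),
\]
from which the constant $\tfrac{1}{8\pi}$ of \eqref{2.1} and, via rescaling, the $\tfrac{1}{2\pi}$ of \eqref{2.2} both originate.

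To establish \eqref{2.1}, I would first apply the Young--Fenchel inequality $sy\le s\ln(s/r)-s+re^{y}$ associated with the convex function $s\mapsto s\ln s$ (equivalently, the Donsker--Varadhan variational representation of relative entropy) with $s=\phi$, $r=\overline{\phi}$, $y=a|\psi|$, treating $\phi\,dx/\int_{\Omega}\phi$ as a probability density against the uniform reference $dx/|\Omega|$. This yields
\[
\int_{\Omega}\phi|\psi|\;\le\;\frac{1}{a}\int_{\Omega}\phi\ln\frac{\phi}{\overline{\phi}}+\frac{\int_{\Omega}\phi}{a}\ln\!\left(\frac{1}{|\Omega|}\int_{\Omega}e^{a|\psi|}\right).
\]
Then, decomposing $a|\psi|=a\overline{|\psi|}+a(|\psi|-\overline{|\psi|})$ and applying the exponential Cherrier bound above to $\chi=|\psi|$ (using $\|\nabla|\psi|\|_{L^{2}}\le\|\nabla\psi\|_{L^{2}}$) converts the second term on the right into the gradient contribution $\tfrac{(1+\varepsilon)a}{8\pi}\bigl(\int_{\Omega}\phi\bigr)\int_{\Omega}|\nabla\psi|^{2}$, a linear-in-mean piece $\overline{|\psi|}\int_{\Omega}\phi=\tfrac{1}{|\Omega|}\bigl(\int_{\Omega}\phi\bigr)\bigl(\int_{\Omega}|\psi|\bigr)$, and a residue $\tfrac{\log C_{\varepsilon}}{a}\int_{\Omega}\phi$. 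A final application of Young's inequality $xy\le \tfrac{1}{4Ma|\Omega|^{2}}x^{2}+Ma\,y^{2}$ redistributes the linear-in-mean piece into the $Ma\bigl(\int_{\Omega}\phi\bigr)\bigl(\int_{\Omega}|\psi|\bigr)^{2}$ and $\tfrac{M}{a}\int_{\Omega}\phi$ parts of \eqref{2.1}.

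To deduce \eqref{2.2} I would apply \eqref{2.1} itself with $\phi\leftarrow\psi$ and $|\psi|\leftarrow\ln(\psi+1)$ (nonnegative since $\psi\ge 0$, and with $|\nabla\ln(\psi+1)|^{2}=|\nabla\psi|^{2}/(\psi+1)^{2}$), choosing $a=4$ so that $\tfrac{(1+\varepsilon)\cdot 4}{8\pi}=\tfrac{1+\varepsilon}{2\pi}$. The cross-term $Ma\bigl(\int_{\Omega}\psi\bigr)\bigl(\int_{\Omega}\ln(\psi+1)\bigr)^{2}$ produced by \eqref{2.1} is majorized by $4M\bigl(\int_{\Omega}\psi\bigr)^{3}$ via $\ln(1+t)\le t$, while the entropy term $\tfrac{1}{4}\int_{\Omega}\psi\ln(\psi/\overline{\psi})$ is handled through $\psi\ln\psi\le\psi\ln(\psi+1)$ together with $\int_{\Omega}\psi\ln(\psi/\overline{\psi})=\int_{\Omega}\psi\ln\psi-\ln\overline{\psi}\cdot\int_{\Omega}\psi$, producing the $-\ln\overline{\psi}\cdot\int_{\Omega}\psi$ contribution of the last term of \eqref{2.2}.

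The delicate point is the sharp constant $\tfrac{1+\varepsilon}{2\pi}$ in \eqref{2.2}. A naive substitution introduces a summand $\tfrac{1}{4}\int_{\Omega}\psi\ln(\psi+1)$ on the right which, if absorbed into the left-hand side, inflates the gradient coefficient by a factor $\tfrac{4}{3}$ to $\tfrac{2(1+\varepsilon)}{3\pi}$. To preserve the sharp $\tfrac{1+\varepsilon}{2\pi}$ I would apply \eqref{2.1} to the mean-zero shift $\ln(\psi+1)-\overline{\ln(\psi+1)}$ instead and treat the compensating quantity $\overline{\ln(\psi+1)}\int_{\Omega}\psi$ separately via the Jensen bound $\overline{\ln(\psi+1)}\le\ln(\overline{\psi}+1)$, arranging matters so that the entropy term from \eqref{2.1} is absorbed into the Moser--Trudinger gradient output rather than into the left-hand side; the ensuing bookkeeping of the $\ln\overline{\psi}$ contributions is precisely what yields the term $\{M-\ln\overline{\psi}\}\int_{\Omega}\psi$ in \eqref{2.2} and is the main technical hurdle of the argument.
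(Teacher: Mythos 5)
First, note that the paper itself offers no proof of this lemma; it simply refers to Winkler (2020), where these are Lemmas 3.1--3.2 and are proved exactly along the lines you take for \eqref{2.1}: the dual (Donsker--Varadhan) form of the relative entropy combined with the Moser--Trudinger/Cherrier inequality in its linearized form, followed by a Young-inequality redistribution of the mean term. Your argument for \eqref{2.1} is correct and is essentially the cited proof.

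For \eqref{2.2}, however, there is a genuine gap, and you have located it yourself but not repaired it. With your choice $a=4$ the entropy term $\frac14\int_\Omega\psi\ln\frac{\psi}{\overline\psi}\le\frac14\int_\Omega\psi\ln(\psi+1)-\frac14\ln\overline\psi\int_\Omega\psi$ must be absorbed into the left-hand side, inflating the gradient coefficient to $\frac{2(1+\varepsilon)}{3\pi}$, and since this exceeds $\frac{1+\varepsilon'}{2\pi}$ for every admissible relabeling of $\varepsilon$, the constant cannot be recovered afterwards. Your proposed remedy --- applying \eqref{2.1} to the mean-zero shift $\ln(\psi+1)-\overline{\ln(\psi+1)}$ and ``absorbing the entropy term into the Moser--Trudinger gradient output'' --- does not work: the shift only affects the harmless terms $\bigl(\int_\Omega|\cdot|\bigr)^2$ and $\overline{\ln(\psi+1)}\int_\Omega\psi$, while the entropy term $\frac1a\int_\Omega\psi\ln\frac{\psi}{\overline\psi}$ reappears unchanged and is in no way dominated by $\int_\Omega\frac{|\nabla\psi|^2}{(\psi+1)^2}$ with a small constant. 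The correct and much simpler fix is to keep the absorption into the left-hand side but choose $a=2$: applying \eqref{2.1} with $\phi:=\psi$, $\psi:=\ln(\psi+1)$, $a=2$ gives
\begin{equation*}
\int_\Omega\psi\ln(\psi+1)\le\frac12\int_\Omega\psi\ln(\psi+1)-\frac12\ln\overline\psi\int_\Omega\psi
+\frac{1+\varepsilon}{4\pi}\Bigl\{\int_\Omega\psi\Bigr\}\int_\Omega\frac{|\nabla\psi|^2}{(\psi+1)^2}
+2M\Bigl\{\int_\Omega\psi\Bigr\}^3+\frac{M}{2}\int_\Omega\psi,
\end{equation*}
and multiplying by $2$ yields \eqref{2.2} exactly. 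The point is that after absorption the gradient coefficient is $\frac{a}{a-1}\cdot\frac{(1+\varepsilon)a}{8\pi}=\frac{(1+\varepsilon)a^2}{8\pi(a-1)}$, which is minimized precisely at $a=2$ with value $\frac{1+\varepsilon}{2\pi}$; your $a=4$ sits away from the optimum. With this one change your argument closes, and no mean-zero shift or Jensen step is needed.
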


The following elementary lemma provides some useful information on both the short-time
and the large-time behavior of certain integrals, which is used in the proof of Theorem 1.1.
\begin{lemma}\label{lemma2.5}
Let $\alpha<1$, $\beta<1$, and $\gamma$ and $\delta$ be positive constants such that $\gamma\neq\delta$. Then there exists $C>0$ such that
\begin{equation}
\int_0^t(1+(t-s)^{-\alpha})e^{-\gamma(t-s)}\cdot(1+s^{-\beta})e^{-\delta s}ds\leq C(1+t^{\min\{0,1-\alpha-\beta\}})e^{-\min\{\gamma,\delta\}t}
\end{equation}
for all $t>0$.
\end{lemma}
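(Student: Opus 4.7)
The plan is to pull out the dominant exponential $e^{-\mu t}$ with $\mu:=\min\{\gamma,\delta\}$ and then bound the remaining factor by separating the small-time regime (where the only danger is the endpoint singularities of $(t-s)^{-\alpha}$ and $s^{-\beta}$) from the large-time regime (where the residual exponential $e^{-|\gamma-\delta|s}$ absorbs any polynomial growth). By the change of variables $s\mapsto t-s$ the integrand is symmetric under the simultaneous swap $(\alpha,\gamma)\leftrightarrow(\beta,\delta)$, so I may assume $\gamma<\delta$ and set $\mu=\gamma$, $\eta:=\delta-\gamma>0$. Writing $e^{-\gamma(t-s)}e^{-\delta s}=e^{-\mu t}e^{-\eta s}$ factors out the targeted decay, so it suffices to show
\begin{equation*}
I(t):=\int_{0}^{t}\bigl(1+(t-s)^{-\alpha}\bigr)\bigl(1+s^{-\beta}\bigr)e^{-\eta s}\,ds\le C\bigl(1+t^{\min\{0,\,1-\alpha-\beta\}}\bigr)\quad\text{for all }t>0.
\end{equation*}

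In the short-time regime $t\in(0,1]$ I simply drop the factor $e^{-\eta s}\le 1$ and expand the product into the four elementary integrals
\begin{equation*}
\int_{0}^{t}\!1\,ds,\quad \int_{0}^{t}\!(t-s)^{-\alpha}\,ds,\quad \int_{0}^{t}\!s^{-\beta}\,ds,\quad \int_{0}^{t}\!(t-s)^{-\alpha}s^{-\beta}\,ds.
\end{equation*}
The first three equal $t$, $t^{1-\alpha}/(1-\alpha)$, $t^{1-\beta}/(1-\beta)$ and are uniformly bounded on $(0,1]$ because $\alpha,\beta<1$; the last is the Beta integral $B(1-\alpha,1-\beta)\,t^{1-\alpha-\beta}$, finite by the same hypothesis. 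On $(0,1]$ the power $t^{1-\alpha-\beta}$ is dominated by $t^{\min\{0,1-\alpha-\beta\}}$, while $e^{-\mu t}\ge e^{-\mu}$ on the same interval, yielding the claim for $t\le 1$.

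For $t\ge 1$ I split $I(t)=I_{1}(t)+I_{2}(t)$ at $s=t/2$. On $(0,t/2)$ the factor $(t-s)^{-\alpha}\le(t/2)^{-\alpha}$ is bounded by a constant, so $I_{1}(t)\le C\int_{0}^{\infty}(1+s^{-\beta})e^{-\eta s}\,ds$, and this integral is finite because $\beta<1$ and $\eta>0$. On $(t/2,t)$ we have $s^{-\beta}\le(t/2)^{-\beta}\le C$ and $e^{-\eta s}\le e^{-\eta t/2}$, so the substitution $r=t-s$ gives
\begin{equation*}
I_{2}(t)\le Ce^{-\eta t/2}\int_{0}^{t/2}\bigl(1+r^{-\alpha}\bigr)\,dr\le Ce^{-\eta t/2}\bigl(t+t^{1-\alpha}\bigr),
\end{equation*}
which is bounded uniformly in $t\ge 1$ since the exponential dominates the polynomial. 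Hence $I(t)\le C$ for $t\ge 1$, and since $\min\{0,1-\alpha-\beta\}\le 0$ forces $t^{\min\{0,1-\alpha-\beta\}}\le 1$ on that range, this matches the asserted bound.

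Combining the two regimes and multiplying back by $e^{-\mu t}$ delivers the lemma. The only mildly delicate point is how the two endpoint singularities interact through the Beta integral, but this is handled cleanly by the split at $s=t/2$ and the hypothesis $\alpha,\beta<1$; no further idea is needed, and the exponent $\min\{0,1-\alpha-\beta\}$ simply records whether those singularities overlap strongly enough to force blow-up as $t\downarrow 0$. The hypothesis $\gamma\ne\delta$ is used only to guarantee $\eta>0$, so that the residual $e^{-\eta s}$ is genuinely integrable on $(0,\infty)$.
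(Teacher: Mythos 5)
The paper states Lemma \ref{lemma2.5} without proof (it is introduced as ``elementary'' and invoked directly in Section 4), so there is no in-paper argument to compare against; your proof is the standard one for this type of convolution estimate, and it is correct in the regime in which the lemma is actually applied. The factorization $e^{-\gamma(t-s)}e^{-\delta s}=e^{-\mu t}e^{-\eta s}$ after the WLOG reduction $\gamma<\delta$, the Beta-integral computation for $t\le 1$, and the split at $s=t/2$ for $t\ge 1$ are all sound, and the symmetry $s\mapsto t-s$ correctly justifies the WLOG step since the right-hand side is invariant under $(\alpha,\gamma)\leftrightarrow(\beta,\delta)$.

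One caveat worth recording: in the large-time regime you use ``$(t-s)^{-\alpha}\le(t/2)^{-\alpha}\le C$'' on $(0,t/2)$ and ``$s^{-\beta}\le(t/2)^{-\beta}\le C$'' on $(t/2,t)$, and both steps silently require $\alpha\ge 0$ and $\beta\ge 0$ (for $\alpha<0$ the map $r\mapsto r^{-\alpha}$ is increasing, so the inequality reverses and $I_1(t)$ in fact grows like $t^{|\alpha|}$). This is not really your fault: the lemma as literally stated, with only $\alpha<1$, is false for negative exponents --- e.g.\ $\alpha=-1$, $\beta=0$, $\gamma=1$, $\delta=2$ gives a left-hand side exactly of order $te^{-t}$, which is not $O(e^{-t})$. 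Since every application in the paper (e.g.\ \eqref{4.6}, \eqref{4.7}, \eqref{4.17}) uses exponents in $[0,1)$, your argument proves exactly what is needed; you should just state the hypothesis $\alpha,\beta\in[0,1)$ (or restrict to that case explicitly) where you invoke monotonicity of $r\mapsto r^{-\alpha}$.
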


Due to the nonlinear no-flux boundary condition rather than the homogeneous Neumann boundary condition for $n$ in \eqref{1.6},
  it will be convenient to deal with  \eqref{1.6} by means of certain appropriate approximations.
  Following the approaches
pursued in \cite{Winkler(cvpde2015),LiSuenWinkler(2015)}, we fix a family of smooth functions
$\rho_\varepsilon\in C_0^\infty(\Omega)$, where $0\leq\rho_\varepsilon(x)\leq1$ for $\varepsilon\in(0,1)$ and
$\rho_\varepsilon(x)\nearrow 1$ in $\Omega$ as $\varepsilon\searrow 0$. For each $\varepsilon>0$, we define
$S _\varepsilon(x,n,v)=\rho_\varepsilon(x) S(x,n,v)$, and then consider the regularized variant of  \eqref{1.6} as follows:
\begin{equation}\label{2.4}
\begin{cases}
		n_{\varepsilon t}+u_\varepsilon\cdot\nabla n_\varepsilon=\Delta n_\varepsilon-\nabla\cdot(n_\varepsilon S_\varepsilon(x,n_\varepsilon,v_\varepsilon)\cdot\nabla v_\varepsilon),
&\quad x\in\Omega,t>0,\\
		v_{\varepsilon t}+u_\varepsilon\cdot\nabla v_\varepsilon=\Delta v_\varepsilon-v_\varepsilon w_\varepsilon,
&\quad x\in\Omega,t>0,\\
		w_{\varepsilon t}+u_\varepsilon\cdot\nabla w_\varepsilon=\Delta w_\varepsilon-w_\varepsilon+n_\varepsilon,
&\quad x\in\Omega,t>0,\\
        u_{\varepsilon t}+(u_\varepsilon\cdot \nabla)  u_\varepsilon=\Delta u_\varepsilon-\nabla P_\varepsilon+n_\varepsilon\nabla\Phi,
&\quad x\in\Omega,t>0,\\
\nabla\cdot u_\varepsilon=0,
&\quad x\in\Omega,t>0,\\
\nabla n_\varepsilon\cdot\nu=\nabla v_\varepsilon\cdot\nu=\nabla w_\varepsilon\cdot \nu=0,u_\varepsilon=0,
&\quad x\in\partial\Omega,t>0,\\
n_\varepsilon(x,0)=n_0(x),v_\varepsilon(x,0)=v_0(x),w_\varepsilon(x,0)=w_0(x),u_\varepsilon(x,0)=u_0(x),
&\quad x\in\Omega,
	\end{cases}
\end{equation}
which is globally solvable in the classical sense:
\begin{lemma}\label{lemma2.6}
For all $\varepsilon\in (0,1)$, there exist functions
 \begin{equation*}
		\left\{\begin{array}{l}
			n_\varepsilon\in C^{0}(\bar{\Omega}\times[0,\infty))\cap C^{2,1}(\bar{\Omega}\times(0,\infty)),\\
			v_\varepsilon\in C^{0}(\bar{\Omega}\times[0,\infty))\cap C^{2,1}(\bar{\Omega}\times(0,\infty)),\\
			w_\varepsilon\in C^{0}(\bar{\Omega}\times[0,\infty))\cap C^{2,1}(\bar{\Omega}\times(0,\infty)),\\
u_\varepsilon\in C^{0}(\bar{\Omega}\times[0,\infty); \mathbb{R}^2)\cap C^{2,1}(\bar{\Omega}\times(0,\infty); \mathbb{R}^2)\,\, \hbox{and}\\
P_\varepsilon\in C^{1,0}(\bar{\Omega}\times (0,\infty)),
		\end{array}
		\right.
	\end{equation*}
 such that $(n_\varepsilon,v_\varepsilon,w_\varepsilon,u_\varepsilon,P_\varepsilon)$  solves \eqref{2.4}
classically in $\Omega\times (0,\infty)$ and  $n_\varepsilon>0,v_\varepsilon>0,w_\varepsilon >0$ in $
 \bar{\Omega}\times (0,\infty)$. Furthermore,
\begin{equation}\label{2.5}
\int_\Omega  n_\varepsilon(\cdot,t)=\int_\Omega  n_0 \quad\hbox{for all}  \,\, t>0
\end{equation}
as well as \begin{equation} \label{2.6}
 \|v_\varepsilon(\cdot,t)\|_{L^\infty(\Omega)}\leq\|v_{0}\|_{L^{\infty}(\Omega)}
 \quad\hbox{for all}  \,\, t>0.
\end{equation}
\end{lemma}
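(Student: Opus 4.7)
The strategy is to (i) establish local-in-time classical solvability on a maximal interval $[0,T_{\max,\varepsilon})$ together with a standard extensibility criterion, (ii) derive the pointwise positivity and the identities \eqref{2.5}--\eqref{2.6}, and (iii) rule out finite-time blow-up via a bootstrap that is possible precisely because of the cut-off. The key simplification offered by the approximation is that $\rho_\varepsilon \in C_0^\infty(\Omega)$ vanishes in a neighbourhood of $\partial \Omega$, so that the nonlinear no-flux condition $(\nabla n_\varepsilon - n_\varepsilon S_\varepsilon \cdot \nabla v_\varepsilon)\cdot\nu = 0$ collapses to the homogeneous Neumann condition $\nabla n_\varepsilon \cdot \nu = 0$ on $\partial\Omega$; this is what allows a standard linear parabolic theory to be used at the boundary.

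For local existence I would mimic the Banach fixed-point scheme of \cite{Winkler(cvpde2015),LiSuenWinkler(2015)}. On a small interval $[0,T]$, and in a closed ball of $C^{0}([0,T];C^{0}(\bar\Omega)\times W^{1,q}(\Omega)\times W^{1,q}(\Omega)\times D(A^\beta))$ for some $q>2$ and $\beta\in(\tfrac12,1)$, one solves the four subproblems successively: the Stokes subproblem for $u_\varepsilon$ driven by $n_\varepsilon\nabla\Phi$ (using the semigroup bounds of Lemma \ref{lemma2.2}), the linear parabolic subproblems for $v_\varepsilon$ and $w_\varepsilon$ with drift $u_\varepsilon$, and the linear drift-diffusion subproblem for $n_\varepsilon$ with given $v_\varepsilon,u_\varepsilon$. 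Standard smoothing and contraction estimates produce a unique classical solution $(n_\varepsilon,v_\varepsilon,w_\varepsilon,u_\varepsilon,P_\varepsilon)$ with the regularity stated and a maximal time $T_{\max,\varepsilon}\in(0,\infty]$ satisfying the criterion
\begin{equation*}
T_{\max,\varepsilon}<\infty \;\Longrightarrow\;
\limsup_{t\nearrow T_{\max,\varepsilon}}\!\Bigl(\|n_\varepsilon(\cdot,t)\|_{L^\infty(\Omega)}
+\|v_\varepsilon(\cdot,t)\|_{W^{1,q}(\Omega)}
+\|w_\varepsilon(\cdot,t)\|_{W^{1,q}(\Omega)}
+\|A^\beta u_\varepsilon(\cdot,t)\|_{L^2(\Omega)}\Bigr)=\infty.
\end{equation*}
Strict positivity of $n_\varepsilon,v_\varepsilon,w_\varepsilon$ then follows from the strong parabolic maximum principle (using $n_0\not\equiv 0$ with $n_0\geq 0$ together with $v_0,w_0>0$). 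Integrating the first equation of \eqref{2.4} over $\Omega$, exploiting $\nabla\cdot u_\varepsilon=0$ and the no-flux condition, yields \eqref{2.5}; the maximum principle applied to the second equation, in which $-v_\varepsilon w_\varepsilon\leq 0$ and $u_\varepsilon$ is divergence-free with $u_\varepsilon=0$ on $\partial\Omega$, yields \eqref{2.6}.

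To exclude $T_{\max,\varepsilon}<\infty$, I would argue on an arbitrary $[0,T]$ with $T<T_{\max,\varepsilon}$. The key point, which removes the singularity in $S_\varepsilon$ for fixed $\varepsilon$, is that the strong maximum principle applied to the linear parabolic equation solved by $v_\varepsilon$ produces, for any compact subset $K\Subset\Omega$, a positive lower bound $v_\varepsilon\geq \eta(K,\varepsilon,T)>0$ on $K\times[0,T]$; choosing $K\supset\mathrm{supp}\,\rho_\varepsilon$ and combining with \eqref{2.6} and the monotonicity of $S_0$ renders $S_\varepsilon$ bounded on $\Omega\times[0,T]$ by a constant depending on $\varepsilon,T$. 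From this point on the system is essentially a chemotaxis-Navier-Stokes problem with a bounded sensitivity, and one can close the bounds by the usual two-dimensional chain: $L^2$ energy estimate for $u_\varepsilon$ together with the Stokes semigroup bounds, an $L^p$-testing / Moser iteration on the $n_\varepsilon$ equation using the boundedness of $S_\varepsilon\cdot\nabla v_\varepsilon$ obtained from parabolic regularity of $v_\varepsilon$, and standard parabolic regularity for $w_\varepsilon$ forced by $n_\varepsilon$. The main obstacle is the apparent circularity — a lower bound on $v_\varepsilon$ on $\mathrm{supp}\,\rho_\varepsilon$ requires control on $w_\varepsilon$, which requires $n_\varepsilon$, which in turn uses boundedness of $S_\varepsilon$. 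I would resolve it by first applying the maximum principle on a short subinterval where the pointwise lower bound of $v_\varepsilon$ is a small fraction of $\min_{\bar\Omega}v_0$ independently of $w_\varepsilon$, then iterating the argument on consecutive subintervals whose lengths do not shrink to zero before $T_{\max,\varepsilon}$, thereby contradicting the extensibility criterion and yielding $T_{\max,\varepsilon}=\infty$.
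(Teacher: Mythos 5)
Your overall skeleton (local existence via fixed point, positivity and the two identities via the maximum principle and mass conservation, then extension) matches the standard template and the treatment of \eqref{2.5}--\eqref{2.6} agrees with the paper. However, your mechanism for removing the singularity of $S_\varepsilon$ --- and hence your proof that $T_{\max,\varepsilon}=\infty$ --- is different from the paper's and has a genuine gap.

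You propose to bound $S_\varepsilon$ by producing a positive lower bound $v_\varepsilon \geq \eta(K,\varepsilon,T)$ on $K\times[0,T]$ with $K\supset\mathrm{supp}\,\rho_\varepsilon$, and you acknowledge a circularity ($\eta$ requires control on $w_\varepsilon$, which requires $n_\varepsilon$, which uses boundedness of $S_\varepsilon$) which you claim to resolve by iterating over short subintervals on which ``the pointwise lower bound of $v_\varepsilon$ is a small fraction of $\min_{\bar\Omega}v_0$ independently of $w_\varepsilon$.'' This last claim is false: since $v_{\varepsilon t}\geq \Delta v_\varepsilon - u_\varepsilon\cdot\nabla v_\varepsilon - \|w_\varepsilon\|_{L^\infty}v_\varepsilon$, the best one gets from comparison on $[t_0,t_0+\tau]$ is a lower bound that decays like $e^{-\tau\|w_\varepsilon\|_{L^\infty([t_0,t_0+\tau])}}$, so the length of the admissible subinterval shrinks to zero as soon as $\|w_\varepsilon\|_{L^\infty}$ grows --- which is exactly what one cannot exclude a priori near a putative blow-up time. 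The iteration therefore does not close, and this step of the extension argument is incomplete. Note also that the role of the cut-off $\rho_\varepsilon$ is to convert the nonlinear no-flux condition into a homogeneous Neumann condition, as you correctly observe at the start; it is \emph{not} what controls the singularity of $S$.

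The paper avoids the circularity entirely with a simpler observation: from \eqref{1.7b} and \eqref{2.6}, $|v_\varepsilon S_\varepsilon(x,n_\varepsilon,v_\varepsilon)|\leq S_0(v_\varepsilon)\leq S_0(\|v_0\|_{L^\infty(\Omega)})$ uniformly, with \emph{no} lower bound on $v_\varepsilon$ required. Rewritten in the variable $z_\varepsilon=-\ln(v_\varepsilon/\|v_0\|_{L^\infty})$ (so that $\nabla v_\varepsilon=-v_\varepsilon\nabla z_\varepsilon$), the cross-diffusive flux becomes $n_\varepsilon v_\varepsilon S_\varepsilon\cdot\nabla z_\varepsilon$ with a tensor coefficient bounded independently of $(\varepsilon,t)$, so that \eqref{2.4} is, in these variables, a nonsingular chemotaxis--Navier--Stokes system to which the arguments of Dai--Liu \cite[Theorem 1.1]{Dai Liu(2023)} apply with only minor modifications. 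If you want to repair your proposal, replace the lower-bound-on-$v_\varepsilon$ step by this uniform bound on $v_\varepsilon S_\varepsilon$; the remainder of your bootstrap chain ($L^2$ for $u_\varepsilon$, Moser iteration for $n_\varepsilon$, parabolic regularity for $v_\varepsilon,w_\varepsilon$) then goes through without the circularity.
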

\begin{proof}[Proof]
 From \eqref{1.7b}, it follows that  $|vS_\varepsilon(x,n,v)|\leq S_0(\|v_0\|_{L^\infty(\Omega)})$. Thereupon, by the arguments in
 \cite [Theorem 1.1]{Dai Liu(2023)} with possible minor modifications, \eqref{2.4} admits a unique global classical solution
 $(n_\varepsilon, v_\varepsilon, w_\varepsilon,  u_\varepsilon, P_\varepsilon) $, up to the addition of constants
  to the pressure $P_\varepsilon$. The identity \eqref{2.5} directly results upon integration of the first equation in \eqref{2.4} with respect to $x\in\Omega$, whereas
  \eqref{2.6} is a straightforward consequence of the maximum principle applied to the second equation in \eqref{2.4}, due to $w_\varepsilon\geq 0$.
    \end{proof}

\section{Global Boundedness}
In this section, under the condition that $m:=\int_{\Omega}n_{0}$  is appropriately small, we shall establish the global  boundedness of the solutions to \eqref{1.6} in planer domains,
which is a consequence of $L^\infty(\Omega)$-estimates of solutions $(n_\varepsilon, v_\varepsilon, w_\varepsilon,  u_\varepsilon, P_\varepsilon) $
 obtained in Lemma \ref{lemma2.6}.
In order to derive the relevant estimates thereof,
we will first look at a de-singularized version of \eqref{2.4}.
In view of the properties of the logarithmic function and \eqref{2.6}, we 
introduce  the following nonnegative function (see e.g. 
\cite{Winkler(2016),Black(2018)}):
\begin{equation}\label{3.1}
	z_\varepsilon(x,t):=-\ln\frac{v_\varepsilon(x,t)}{\|v_{0}\|_{L^{\infty}(\Omega)}}\mbox{ for all }(x,t)\in\Omega\times[0,T_{max}),\quad z_0(x)=-\ln\frac{v_0(x)}{\|v_{0}\|_{L^{\infty}(\Omega)}},
\end{equation}
and turn \eqref{2.4} into the non-singular chemotaxis system
\begin{equation}\label{3.2}
	\left\{\begin{array}{l}
		n_{\varepsilon t}+u_\varepsilon\cdot\nabla n_\varepsilon=\Delta n_\varepsilon+\nabla\cdot(n_\varepsilon v_\varepsilon S(x,n_\varepsilon,v_\varepsilon)\cdot \nabla z_\varepsilon),\quad x\in\Omega,t>0,\\
		z_{\varepsilon t}+u_\varepsilon\cdot\nabla z_\varepsilon=\Delta z_\varepsilon-|\nabla z_\varepsilon|^{2}+w_\varepsilon,\quad x\in\Omega,t>0,\\
		w_{\varepsilon t}+u_\varepsilon \cdot\nabla w_\varepsilon=\Delta w_\varepsilon-w_\varepsilon+n_\varepsilon,\quad x\in\Omega,t>0,\\
u_{\varepsilon t}+u_\varepsilon\cdot\nabla u_\varepsilon=\Delta u_\varepsilon-\nabla P_\varepsilon+n_\varepsilon \nabla\Phi,
\quad\nabla\cdot u_\varepsilon=0,\quad x\in\Omega,t>0,\\
		\frac{\partial n_\varepsilon}{\partial\nu}=\frac{\partial z_\varepsilon}{\partial\nu}=\frac{\partial w_\varepsilon}{\partial\nu}=0,\quad u_\varepsilon=0,\quad x\in\partial\Omega,t>0,\\ n_\varepsilon(\cdot,0)=u_{0},z_\varepsilon(\cdot,0)=z_{0},w_\varepsilon(\cdot,0)=w_{0},u_\varepsilon(\cdot,0)=u_0,\quad x\in\Omega.
	\end{array}\right.
\end{equation}

 In what follows, unless otherwise stated, without further explicit mentioning, we shall drop the index $\varepsilon$  of the global classical solution
    $(n_\varepsilon, z_\varepsilon, w_\varepsilon,  u_\varepsilon, P_\varepsilon) $ provided by Lemma \ref{lemma2.6} for simplicity.

Now we will derive some basic properties of the solutions to \eqref{3.2} as a starting point of the subsequent analysis.
\begin{lemma}\label{lemma3.1}
	Let $(n,z,w,u,P)$ be the global classical solution of \eqref{3.2} and $K>0$.
 If $\|v_{0}\|_{L^{\infty}(\Omega)}<K$,  then
\begin{equation}\label{3.3}
\int_\Omega w(\cdot,t)\leq m+e^{-t}\int_\Omega w_0 \,\,  \mbox{ for all} \ t\geq 0,
\end{equation}
and for any $0\leq t_1<t_2$, we have
\begin{equation}\label{3.4}
\frac1{t_2-t_1}\int_{t_1}^{t_2}\int_\Omega|\nabla z|^2\leq\frac{C_0(t_1+1)}{t_2-t_1}+m+e^{-t_1}\int_\Omega w_0,
\end{equation}
as well as
\begin{equation}\label{3.5}
\frac1{t_2-t_1}\int_{t_1}^{t_2}\int_\Omega\frac{|\nabla n|^{2}}{(n+1)^2}\leq\frac{C_1(t_1+1)}{t_2-t_1}+mS_0^2(K)
+S_0^2(K)e^{-t_1}\int_\Omega w_0,
\end{equation}
where $C_0:=
\int_\Omega \ln \frac{\|v_0\|_{L^\infty(\Omega)}}{v_0}+ m+\int_\Omega w_0$ and $C_1:=2m+S_0^2(K)C_0$.
\end{lemma}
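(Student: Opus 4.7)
The three bounds will be obtained by testing the respective equations of \eqref{3.2} against suitable multipliers and invoking the mass conservation $\int_\Omega n(\cdot,t)=m$ from \eqref{2.5}.

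For \eqref{3.3}, integrate the $w$-equation of \eqref{3.2} over $\Omega$: the convective term vanishes since $\nabla\cdot u=0$ and $u|_{\partial\Omega}=0$, while $\int_\Omega \Delta w=0$ by the Neumann condition. One obtains the linear ODE $\frac{d}{dt}\int_\Omega w + \int_\Omega w = m$, whose explicit solution immediately gives \eqref{3.3}.

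For \eqref{3.4}, integrating the $z$-equation in \eqref{3.2} over $\Omega$ produces
\[
\frac{d}{dt}\int_\Omega z + \int_\Omega |\nabla z|^2 = \int_\Omega w.
\]
Integrating from $t_1$ to $t_2$ and discarding the nonnegative term $\int_\Omega z(\cdot,t_2)$ yields $\int_{t_1}^{t_2}\int_\Omega |\nabla z|^2 \leq \int_\Omega z(\cdot,t_1) + \int_{t_1}^{t_2}\int_\Omega w$. The first term is controlled by integrating the same identity on $[0,t_1]$, dropping its coercive term, and using \eqref{3.3} to bound $\int_0^{t_1}\int_\Omega w \leq m t_1 + \int_\Omega w_0$; together with $\int_\Omega z_0 = \int_\Omega \ln(\|v_0\|_{L^\infty(\Omega)}/v_0)$ this gives a bound of order $(t_1+1)$. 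The second term is estimated by $(t_2-t_1)(m + e^{-t_1}\int_\Omega w_0)$ directly from \eqref{3.3}. Dividing by $t_2-t_1$ delivers \eqref{3.4} with the stated constant $C_0$.

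For \eqref{3.5}, test the $n$-equation against $\frac{1}{n+1}$. The convective term vanishes, the diffusion contributes $+\int_\Omega |\nabla n|^2/(n+1)^2$ after integration by parts using $\partial_\nu n=0$ from \eqref{2.4}, and the taxis term produces the cross contribution
\[
\int_\Omega \frac{n}{(n+1)^2}\,(v S(x,n,v)\cdot \nabla z)\cdot \nabla n.
\]
The key cancellation $|vS(x,n,v)|\leq S_0(v)\leq S_0(K)$ afforded by \eqref{1.7b} together with \eqref{2.6}---the very motivation for the substitution $z=-\ln(v/\|v_0\|_{L^\infty(\Omega)})$ introduced in \eqref{3.1}---enables Young's inequality to dominate this cross term by $\tfrac12\int_\Omega |\nabla n|^2/(n+1)^2 + \tfrac12 S_0^2(K)\int_\Omega |\nabla z|^2$, leaving a coercive $\tfrac12\int_\Omega |\nabla n|^2/(n+1)^2$ on the right. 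Integrating the resulting differential inequality over $(t_1,t_2)$, discarding $-\int_\Omega \ln(n(\cdot,t_1)+1)\leq 0$, using $\ln(n+1)\leq n$ to bound $\int_\Omega \ln(n(\cdot,t_2)+1)\leq m$, and finally plugging in \eqref{3.4} for the $|\nabla z|^2$-integral, yields \eqref{3.5} with $C_1 = 2m + S_0^2(K)C_0$.

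The only genuine difficulty is the handling of the singular taxis contribution in the last step: the singularity $|S|\leq S_0(v)/v$ is precisely compensated by the factor $v$ generated by the change of variable, and without this cancellation the cross term could not be absorbed in $L^2$. The remainder is careful bookkeeping of $C_0$ and $C_1$.
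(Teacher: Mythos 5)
Your proposal is correct and follows essentially the same route as the paper: integrate the $w$-equation for the mass ODE, integrate the $z$-equation (dropping the transport term via solenoidality) to control $\int\int|\nabla z|^2$ through $\int_\Omega z(\cdot,t_1)$ and the bound \eqref{3.3}, and test the $n$-equation against $\frac{1}{n+1}$ so that the singular tensor is neutralized by $v|S|\leq S_0(v)\leq S_0(K)$ and Young's inequality absorbs the cross term. The bookkeeping of $C_0$ and $C_1$ matches the paper's as well.
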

\begin{proof}[Proof] Since integrating the third equation in \eqref{3.2} shows that
\begin{equation*}
\frac d{dt}\int_\Omega w
=- \int_\Omega w+\int_\Omega n =m-\int_\Omega w,
\end{equation*}
the inequality \eqref{3.3} results from an ODE argument immediately.

Upon the integration  of the second equation in \eqref{3.2} and recalling the solenoidality of $u$, we obtain
\begin{equation}\label{3.6}
\frac d{dt}\int_\Omega z+\int_\Omega|\nabla z|^2
=\int_\Omega w-\int_\Omega u\cdot\nabla z
\leq m+e^{-t}\int_\Omega w_0,
\end{equation}
as well as
\begin{equation}\label{3.7}
\int_\Omega z(\cdot,t)\leq \int_\Omega z(\cdot,0)+\int_\Omega w_0+m t.
\end{equation}
Furthermore, we get
\begin{equation*}
\begin{split}
\frac1{t_2-t_1}\int_{t_1}^{t_2}\int_{\Omega}|\nabla z(\cdot,s)|^{2}ds
&\leq\frac1{t_2-t_1}\int_\Omega z(\cdot,t_1)
+m+e^{-t_1}\int_{\Omega}w_0\\
&\leq \frac{C_0 (t_1+1) }{t_2-t_1}+m+e^{-t_1}\int_\Omega w_0
\end{split}
\end{equation*}
for all $0\leq t_1<t_2$,  which is the desired inequality \eqref{3.4}.

Now, multiplying the first equation in (\ref{3.2}) by $\frac1{n+1}$, integrating by parts over $\Omega$ and using the solenoidality of $u$,
we derive that
\begin{equation}\label{3.8}
\begin{split}
\frac d{dt}\int_\Omega\ln(n+1)
&=\int_\Omega\frac1{n+1}
(\Delta n+\nabla\cdot(nS(\cdot,n,v)v\cdot\nabla z)-u\cdot\nabla n)\\
&=\int_\Omega\frac{|\nabla n|^2}{(n+1)^2}
+\int_\Omega\frac {nv}{n+1}(S\cdot\nabla z)\cdot\frac{\nabla n}{n+1}\\
&\geq \frac1 2\int_\Omega\frac{|\nabla n|^2}{(n+1)^2}
-\frac{S_0^2(\|v_{0}\|_{L^{\infty}(\Omega)})}{2}\int_\Omega|\nabla z|^2.
\end{split}
\end{equation}
Integrating \eqref{3.8} in time and by the fact $0\leq\ln(n+1)\leq n$, we obtain
\begin{equation*}
\begin{split}
&\frac1{t_2-t_1}
\int_{t_1}^{t_2}\int_{\Omega}\frac{|\nabla n|^2}{(n+1)^2}\\
&\leq\frac{S_0^2(\|v_{0}\|_{L^{\infty}(\Omega)})}{t_2-t_1}\int_{t_1}^{t_2}\int_{\Omega}|\nabla z|^2
+\frac2{t_2-t_1}\int_{\Omega}\ln(n(\cdot,t_2)+1)
-\frac2{t_2-t_1}\int_{\Omega}\ln(n(\cdot,t_1)+1)\\
&\leq\frac {S_0^2(\|v_{0}\|_{L^{\infty}(\Omega)})}{t_2-t_1}
\int_{t_1}^{t_2}\int_{\Omega}|\nabla z|^2
+\frac {2m} {t_2-t_1}\\
&\leq\frac{S_0^2(K)C_0 (t_1+1) }{t_2-t_1}
+mS_0^2(K)+S_0^2(K) e^{-t_1}\int_\Omega w_0+\frac {2m}{t_2-t_1},
\end{split}
\end{equation*}
which readily yields  \eqref{3.5}.
\end{proof}
\begin{lemma}For all $0\leq t_1<t_2$, we have
\begin{equation}\label{3.9}
\frac1{t_2-t_1}\int_{t_1}^{t_2}
\int_{\Omega}n\ln\frac n{\overline{n_0}}
\leq \frac m{\pi}\frac{C_1(1+t_1)}{t_2-t_1}
+(4Mm^2+M)m
+
(\frac{m^2}{\pi}+\frac{m\int_\Omega w_0}{\pi}e^{-t_1}
)S_0^2(K)
+2m\ln_{+}\frac{|\Omega|}{m},
\end{equation}
 where  $M:=M(1,\Omega)$ and $C_1$ are provided by Lemma \ref{lemma2.4} and Lemma \ref{lemma3.1}, respectively.
\end{lemma}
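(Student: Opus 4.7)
The plan is to apply the second Moser--Trudinger inequality \eqref{2.2} from Lemma \ref{lemma2.4} to the bacterial density $n$, then couple the result with the already-established averaged dissipation bound \eqref{3.5}. More precisely, I would take $\psi=n$ and $\varepsilon=1$ in \eqref{2.2}, noting that $\int_\Omega n=m$ by the mass conservation \eqref{2.5}, to obtain
\begin{equation*}
\int_{\Omega}n\ln(n+1)\le \frac{m}{\pi}\int_{\Omega}\frac{|\nabla n|^{2}}{(n+1)^{2}}+4Mm^{3}+\Bigl(M-\ln\frac{m}{|\Omega|}\Bigr)m.
\end{equation*}

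Next I would convert the left-hand side to the desired entropy form. Using $\overline{n_0}=m/|\Omega|$ and the pointwise estimate $n\ln n\le n\ln(n+1)$ valid for $n\ge0$, one gets
\begin{equation*}
\int_{\Omega}n\ln\frac{n}{\overline{n_0}}=\int_{\Omega}n\ln n+m\ln\frac{|\Omega|}{m}\le \int_{\Omega}n\ln(n+1)+m\ln_{+}\frac{|\Omega|}{m},
\end{equation*}
where I freely enlarge $m\ln(|\Omega|/m)$ to its positive part. Combining the two displays and likewise bounding the $m\ln(|\Omega|/m)$ term coming from the Moser--Trudinger estimate by $m\ln_{+}(|\Omega|/m)$ produces the clean pointwise-in-time inequality
\begin{equation*}
\int_{\Omega}n\ln\frac{n}{\overline{n_0}}\le \frac{m}{\pi}\int_{\Omega}\frac{|\nabla n|^{2}}{(n+1)^{2}}+(4Mm^{2}+M)m+2m\ln_{+}\frac{|\Omega|}{m}.
\end{equation*}
The factor $2m\ln_{+}(|\Omega|/m)$ in the statement is thus accounted for as one copy from the entropy/Moser--Trudinger conversion and one copy from the $-\ln(m/|\Omega|)$ produced inside \eqref{2.2}.

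Finally I would integrate this inequality over $t\in(t_1,t_2)$, divide by $t_2-t_1$, and insert the bound \eqref{3.5} of Lemma \ref{lemma3.1} for the time-averaged Dirichlet-type quantity $\frac{1}{t_2-t_1}\int_{t_1}^{t_2}\int_{\Omega}|\nabla n|^{2}/(n+1)^{2}$. Regrouping the three resulting terms $\frac{m}{\pi}\cdot\frac{C_1(1+t_1)}{t_2-t_1}$, $\frac{m}{\pi}\cdot mS_0^{2}(K)$ and $\frac{m}{\pi}\cdot S_0^{2}(K)e^{-t_1}\int_\Omega w_0$ in the form displayed in \eqref{3.9} yields the lemma. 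There is no genuine obstacle here; the only item demanding care is the sign bookkeeping for $\ln(m/|\Omega|)$, which is why one passes to $\ln_{+}$ to obtain a statement uniform in the possibly small or large total mass $m$.
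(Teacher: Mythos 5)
Your proof is correct and follows essentially the same route as the paper: apply \eqref{2.2} with $\psi=n$, $\varepsilon=1$, convert $\int_\Omega n\ln(n+1)$ to the entropy $\int_\Omega n\ln(n/\overline{n_0})$ via $n\ln n\le n\ln(n+1)$ and $\overline{n_0}=m/|\Omega|$ (which, together with the $-\ln(m/|\Omega|)$ term already present in \eqref{2.2}, accounts for the factor $2m\ln_+\frac{|\Omega|}{m}$), then integrate on $(t_1,t_2)$ and invoke \eqref{3.5}. The only cosmetic difference is that you apply \eqref{2.2} first and then convert the left-hand side, and you pass to $\ln_+$ at each step rather than at the very end; the content is identical.
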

\begin{proof}[Proof]
Observing that
\begin{equation*}
	\begin{split}
		\int_\Omega n\ln\frac n{\overline{n_0}}
&=\int_\Omega n\ln n-\int_\Omega n\ln\overline{n_0}\\
&\leq\int_\Omega n\ln(n+1)-m\ln\frac m{|\Omega|},
	\end{split}
\end{equation*}
we apply \eqref{2.2} with $\varepsilon=1$ to the first integral on the right-hand side to obtain
\begin{equation}\label{3.10}
	\begin{split}
\int_\Omega n\ln\frac n{\overline{n_0}}
&\leq\frac1{\pi}
\biggl\{\int_\Omega n\biggr\}
\cdot\biggl\{\int_\Omega\frac{|\nabla n|^2}{(n+1)^2}\biggr\}
+4M\biggl\{\int_\Omega n\biggr\}^3\\	
&\quad+\biggl\{M-\ln\{\frac1{|\Omega|}
       \int_\Omega n\}\biggr\}
\cdot\int_\Omega n-m\ln\frac m{|\Omega|}\\
&=\frac m{\pi}\int_\Omega\frac{|\nabla n|^2}{(n+1)^2}
+4Mm^3
+\bigl\{M-\ln\frac m{|\Omega|}\bigr\}
\cdot m+m\ln\frac{|\Omega|}{m}\\
&=\frac m{\pi}\int_\Omega\frac{|\nabla n|^2}{(n+1)^2}
+4Mm^3+Mm+2m\ln\frac{|\Omega|}{m}.
	\end{split}
\end{equation}
Then, integrating (\ref{3.10}) on $(t_1,t_2)$ along with (\ref{3.5}) entails
\begin{equation*}
	\begin{split}
\frac1{t_2-t_1}\int_{t_1}^{t_2}
\int_\Omega n\ln\frac n{\overline{n_0}}
&\leq\frac m{\pi}\frac1{t_2-t_1}
\int_{t_1}^{t_2}\int_\Omega\frac{|\nabla n|^2}{(n+1)^2}
+4Mm^3+Mm+2m\ln\frac{|\Omega|}{m}\\
&\leq \frac m{\pi}\frac{C_1(1+t_1)}{t_2-t_1}+(\frac{m^2}{\pi}
+\frac m{\pi}e^{-t_1}\int_\Omega w_0)S_0^2(K)
\\
&\quad+4Mm^3+Mm+2m\ln_+\frac{|\Omega|}{m},\\
	\end{split}
\end{equation*}
which readily shows (\ref{3.9}).
\end{proof}

The following result on the evolution of $\int_\Omega|u|^2$ will  play an important role in
establishing the global boundedness and asymptotic behavior of smooth solutions.
\begin{lemma}\label{lemma3.3}
There exists $C_2:=C_2(M)>0$ such that
\begin{equation}\label{3.11}
\frac d{dt}\int_\Omega|u|^2
+\int_\Omega|\nabla u|^2
\leq C_2 m\int_\Omega n\ln\frac n{\overline{n_0}}+C_2m^2
\end{equation}for all $t>0$, where $M:=M(1,\Omega)$ is provided by Lemma \ref{lemma2.4}.
\end{lemma}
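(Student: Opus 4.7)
The plan is to carry out the standard $L^2$ energy estimate for the $u$-equation in \eqref{3.2} and then to use the Moser--Trudinger-type inequality from Lemma \ref{lemma2.4} in order to convert the forcing $\int_\Omega n\nabla\Phi\cdot u$ into the quantity $m\int_\Omega n\ln\frac{n}{\overline{n_0}}$ plus absorbable remainders. Testing the fourth equation of \eqref{3.2} with $u$ and using $\nabla\cdot u=0$ together with $u|_{\partial\Omega}=0$ to eliminate both the pressure term and the convective term, I arrive at the identity
\begin{equation*}
\frac{1}{2}\frac{d}{dt}\int_\Omega|u|^2+\int_\Omega|\nabla u|^2=\int_\Omega n\nabla\Phi\cdot u\leq\|\nabla\Phi\|_{L^\infty(\Omega)}\int_\Omega n|u|,
\end{equation*}
so that everything reduces to controlling $\int_\Omega n|u|$.

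For that I would apply the inequality \eqref{2.1} with $\phi:=n(\cdot,t)$ and $\psi:=|u(\cdot,t)|$, invoking the pointwise Kato-type bound $|\nabla|u||\leq|\nabla u|$ and the mass identity $\int_\Omega n=m$ from \eqref{2.5}, to conclude that for any $a>0$ and any $\varepsilon>0$
\begin{equation*}
\int_\Omega n|u|\leq\frac{1}{a}\int_\Omega n\ln\frac{n}{\overline{n_0}}+\frac{(1+\varepsilon)am}{8\pi}\int_\Omega|\nabla u|^2+Mam\Big(\int_\Omega|u|\Big)^{\!2}+\frac{Mm}{a}.
\end{equation*}
The decisive tuning is $a:=\eta/m$ for a small constant $\eta>0$ to be fixed shortly. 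This turns $\frac{1}{a}$ into a multiple of $m$, producing the leading summand $C_2m\int_\Omega n\ln\frac{n}{\overline{n_0}}$ in \eqref{3.11}, and it turns $\frac{Mm}{a}$ into a multiple of $m^2$, producing the residual constant $C_2m^2$.

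What remains is to absorb the two $|\nabla u|^2$-type contributions into the left-hand side, which is why the energy identity is first multiplied by $2$. The Moser--Trudinger gradient contribution takes the form $\frac{(1+\varepsilon)\eta}{8\pi}\int_\Omega|\nabla u|^2$, manifestly independent of $m$; for the other I combine Cauchy--Schwarz, $(\int_\Omega|u|)^2\leq|\Omega|\int_\Omega|u|^2$, with the Poincaré inequality $\int_\Omega|u|^2\leq C_P\int_\Omega|\nabla u|^2$, which is valid thanks to the Dirichlet condition $u|_{\partial\Omega}=0$, and obtain $Mam(\int_\Omega|u|)^2\leq M|\Omega|C_P\eta\int_\Omega|\nabla u|^2$, again with a coefficient free of $m$. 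The main (and essentially only) obstacle is then to choose $\eta$ — depending on $M$, $\|\nabla\Phi\|_{L^\infty(\Omega)}$, $C_P$, $|\Omega|$ and $\varepsilon$ — small enough that the sum of these two coefficients, multiplied by $2\|\nabla\Phi\|_{L^\infty(\Omega)}$, stays strictly below $2$. Any such $\eta$ allows the two $|\nabla u|^2$ pieces to be absorbed, the remaining constants merge into a single $C_2=C_2(M)$, and \eqref{3.11} follows.
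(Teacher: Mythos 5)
Your proposal is correct and follows essentially the same route as the paper: test the $u$-equation with $u$, bound $\int_\Omega n|u|$ via the Moser--Trudinger inequality \eqref{2.1}, choose $a$ proportional to $1/m$, and absorb the two gradient contributions using Cauchy--Schwarz and the Poincar\'e inequality. The only cosmetic difference is that you apply \eqref{2.1} to the scalar $|u|$ via $|\nabla|u||\le|\nabla u|$, whereas the paper applies it componentwise to $u_1$ and $u_2$ and then recombines; both are valid.
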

\begin{proof}[Proof]
We invoke the Poincar\'{e} inequality to fix $C_p>0$ fulfilling
\begin{equation}\label{3.12}
\int_\Omega|\varphi|^2\leq C_p\int_\Omega|\nabla\varphi|^2\mbox{ for all }\varphi\in W_0^{1,2}(\Omega,\mathbb{R}^2).
\end{equation}
Moreover, using the Cauchy-Schwarz inequality, we have
\begin{equation}\label{3.13}
\bigg\{\int_\Omega|u|\bigg\}^2
\leq|\Omega|\int_\Omega|u|^2
\leq C_p|\Omega|\int_\Omega|\nabla u|^2
\mbox{ for all }t>0.
\end{equation}
Thanks to Lemma \ref{lemma2.4}, testing the third equation in \eqref{3.2} against $u$ shows that
\begin{equation*}
\begin{split}
&\frac1 2\frac d{dt}\int_\Omega|u|^2+\int_\Omega|\nabla u|^2\\
&=
\int_\Omega nu\cdot\nabla\Phi\\
&\leq
\|\nabla\Phi\|_{L^\infty}\int_\Omega n|u|\\
&\leq
\|\nabla\Phi\|_{L^\infty}\int_\Omega n|u_1|
+\|\nabla\Phi\|_{L^\infty}\int_\Omega n|u_2|\\
&\leq
\|\nabla\Phi\|_{L^\infty}
\bigg\{\frac2{a}\int_\Omega n\ln\frac n{\overline{n_0}}
+\frac{am}{2\pi}\int_\Omega|\nabla u|^2
+2Mam(\int_\Omega |u|)^2+\frac{2Mm}{a}\bigg\}\\
&\leq
\frac{2\|\nabla\Phi\|_{L^\infty}}{a}
\int_\Omega n\ln\frac n{\overline{n_0}}
+\bigg\{\frac{am\|\nabla\Phi\|_{L^\infty}}{2\pi}
+2MamC_p|\Omega|\|\nabla\Phi\|_{L^\infty}\bigg\}
\int_\Omega|\nabla u|^2
+\frac{2Mm\|\nabla\Phi\|_{L^\infty}}{a}.
\end{split}
\end{equation*}
Choosing $a:=\frac1{2m\|\nabla\Phi\|_{L^\infty}(1+2MC_p|\Omega|)}$ and
$C: =4(M+1)\|\nabla\Phi\|_{L^\infty}^2(1+2MC_p|\Omega|)$,
we have
\begin{equation}\label{3.14}
\frac d{dt}\int_\Omega|u|^2
+\int_\Omega|\nabla u|^2\leq 2Cm\int_\Omega n\ln\frac n{\overline{n_0}}+2Cm^2,
\end{equation}
and thus completes the proof.
\end{proof}
\begin{lemma}\label{lemma3.4}
There exists $C_3=C_3(m)>0$ such that for all $t>0$,
\begin{equation}\label{3.15}
\int_\Omega w^2(\cdot,t)
\leq C_3 (t+1).
\end{equation}
\end{lemma}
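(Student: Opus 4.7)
The strategy is to test the $w$-equation in \eqref{3.2} with $w$ itself and then to dispose of the undesirable cross-term $\int_\Omega nw$ by means of the Moser--Trudinger type inequality \eqref{2.1}, which lets us trade it against $\int_\Omega n\ln\frac{n}{\overline{n_0}}$ (which is controllable on time averages via \eqref{3.9}) plus a small fraction of the dissipation $\int_\Omega |\nabla w|^2$. After absorption, an ODE-type integration in time furnishes the claimed linear growth.

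Step 1: Test the third equation of \eqref{3.2} by $w$; since $\nabla\cdot u=0$ and $u|_{\partial\Omega}=0$, the convective term vanishes and
\[
\frac{1}{2}\frac{d}{dt}\int_\Omega w^2+\int_\Omega |\nabla w|^2+\int_\Omega w^2=\int_\Omega nw.
\]
Step 2: Apply \eqref{2.1} with $\phi:=n(\cdot,t)$ (so that $\int_\Omega\phi=m$ by \eqref{2.5} and $\overline\phi=\overline{n_0}$), $\psi:=w(\cdot,t)$, and $\varepsilon:=1$, obtaining for every $a>0$
\[
\int_\Omega nw\leq \frac{1}{a}\int_\Omega n\ln\frac{n}{\overline{n_0}}+\frac{am}{4\pi}\int_\Omega |\nabla w|^2+Mam\left(\int_\Omega w\right)^{\!2}+\frac{Mm}{a}.
\]
Picking $a:=2\pi/m$ makes the gradient contribution absorbable by $\int_\Omega|\nabla w|^2$ on the left, and \eqref{3.3} renders the $(\int_\Omega w)^2$-term bounded by a constant $K_1=K_1(m,M,\int_\Omega w_0)$.

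Step 3: After absorption the differential inequality becomes
\[
\frac{d}{dt}\int_\Omega w^2\leq \frac{m}{\pi}\int_\Omega n\ln\frac{n}{\overline{n_0}}+K_1,
\]
where the $+2\int_\Omega w^2$ on the left can simply be dropped for an upper bound. Integrating in time from $0$ to $t$ and then invoking \eqref{3.9} with $t_1=0$, $t_2=t$, which yields $\int_0^t\!\int_\Omega n\ln\frac{n}{\overline{n_0}}\,ds\leq C'(1+t)$ with $C'$ depending only on $m,M,K,S_0(K),|\Omega|$ and $\int_\Omega w_0$, one arrives at
\[
\int_\Omega w^2(\cdot,t)\leq \int_\Omega w_0^2+\frac{m C'}{\pi}(1+t)+K_1 t\leq C_3(t+1).
\]

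\textbf{Anticipated obstacle.} No pointwise-in-time bound for $\int_\Omega n\ln\frac{n}{\overline{n_0}}$ is yet available at this stage of the argument; only the temporal-average bound \eqref{3.9} is. This is exactly what forces the linear-in-$t$ growth on the right-hand side of \eqref{3.15} and prevents any stronger, uniform-in-time estimate here. The care point is therefore purely bookkeeping: choose $a$ so that the $|\nabla w|^2$-contribution from \eqref{2.1} does not exceed the available dissipation, and then avoid any Gronwall-type multiplication by $e^{2t}$ that would spoil the linear growth one inherits from \eqref{3.9}.
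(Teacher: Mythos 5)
Your proposal is correct and follows essentially the same route as the paper: test the $w$-equation by $w$, invoke the Moser--Trudinger inequality \eqref{2.1} with $\phi=n$, $\psi=w$, $\varepsilon=1$, $a=2\pi/m$ to absorb the cross term, control $(\int_\Omega w)^2$ via \eqref{3.3}, and integrate in time using the averaged bound \eqref{3.9}. The only cosmetic difference is that the paper retains the $\int_\Omega w^2$-term and uses variation of constants to pick up an $e^{-t}$ factor on the initial-data term, whereas you simply drop that term; both yield the same $C_3(t+1)$ bound.
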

\begin{proof}[Proof]
Testing the third equation in \eqref{3.2} against $w$, we apply Lemma \ref{lemma2.4} with $\phi=n, \psi=w$ and $\varepsilon=1$
to see that
\begin{equation*}
\begin{split}
\frac1 2&\frac d{dt}\int_\Omega w^2
+\int_\Omega w^2+\int_\Omega|\nabla w|^2\\
&=\int_\Omega nw\\
&\leq\frac1 a\int_\Omega n\ln\frac n{\overline{n_0}}
+\frac a{4\pi}(\int_\Omega n)\int_\Omega|\nabla w|^2
+Ma(\int_\Omega n)(\int_\Omega w)^2
+\frac M a(\int_\Omega n)\\
&\leq \frac1 a\int_\Omega n\ln\frac n{\overline{n_0}}
+\frac{am}{4\pi}\int_\Omega|\nabla w|^2+Ma(\max\{m,\|w_0\|_{L^1(\Omega)}\})^{2}m
+\frac {Mm}{a}.
\end{split}
\end{equation*}
 Taking $a:=\frac{2\pi}{m}$ in the above inequality, we get
\begin{equation}\label{3.16}
\frac d{dt}\int_\Omega w^2
+\int_\Omega w^2+\int_\Omega|\nabla w|^2
\leq
\frac m{\pi}\int_\Omega n\ln\frac n{\overline{n_0}}+4M \pi  (\max\{m,\|w_0\|_{L^1(\Omega)}\})^2+\frac{Mm^2}{2\pi}.
\end{equation}
On the other hand, it follows  from \eqref{3.9} that there is $c_1(m)>0$ such that
\begin{equation}\label{3.17}
\int_{0}^t
\int_{\Omega}n\ln\frac n{\overline{n_0}}
\leq  c_1(m)(t+1).
\end{equation}
Combining \eqref{3.17} with \eqref{3.16}, we have
$$
\int_\Omega w^2(\cdot,t)\leq \|w_0\|_{L^2(\Omega)}^2 e^{-t}+4M \pi  (\max\{m,\|w_0\|_{L^1(\Omega)}\})^2+\frac{Mm^2}{2\pi}
+mc_1(m)(t+1)
$$
 and readily arrive at  \eqref{3.15} with a suitable constant $C_3(m)>0$.
\end{proof}

The derivation of the global boundedness and asymptotic behavior of \eqref{2.4}
relies on an analysis of a conditional Lyapunov functional of the form
\begin{equation}\label{3.18}
\mathcal{F}(n,z,w,u)(t)
:=\int_\Omega n\ln\frac n{\overline{n_0}}
+\frac1 2\int_\Omega|\nabla z|^2
+\int_\Omega(w-\overline{w})^2+\int_\Omega |u|^2.
\end{equation}
As an intermediate step, we will show that the functional $\mathcal{F}(n,z,w,u)(t)$ is non-increasing under an extra smallness hypothesis concerning each component of the solutions.
\begin{lemma}\label{lem3.5}
Let $(n,z,w,u,P)$ be the global classical solution of \eqref{3.2}. There exist constants $m_*>0$, $0<\delta<1$, $\kappa>0$ and $C>0$ such that if $m<m_*$ and
\begin{equation}\label{3.19}
\mathcal{F}(t_0)\leq\delta\mbox{ for some }t_0>0,
\end{equation}
then
\begin{equation}\label{3.20}
\mathcal{F}(t)\leq\delta e^{-\kappa(t-t_0)}
\end{equation}
and
\begin{equation}\label{3.21}
\int_{t_1}^\infty\int_\Omega
\frac{|\nabla n|^2}{n}
+\int_{t_1}^\infty\int_\Omega|\Delta z|^2
+\int_{t_1}^\infty\int_\Omega|\nabla w|^2
+\int_{t_1}^\infty\int_\Omega|\nabla u|^2
\leq Ce^{-\kappa(t_1-t_0)}
\end{equation} for all  $t_1>t_0$.
\end{lemma}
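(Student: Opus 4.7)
My strategy is to show that, under the smallness hypotheses, $\mathcal{F}$ obeys a differential inequality
\begin{equation*}
\mathcal{F}'(t)+\kappa\mathcal{F}(t)+\tfrac12 D(t)\leq 0, \quad D(t):=\int_\Omega\tfrac{|\nabla n|^2}{n}+\int_\Omega|\Delta z|^2+\int_\Omega|\nabla w|^2+\int_\Omega|\nabla u|^2,
\end{equation*}
on every interval where $\mathcal{F}\leq\delta$. A standard continuation/bootstrap argument then propagates the smallness $\mathcal{F}\leq\delta$ from $t_0$ to all later times, which upgrades the inequality to $\mathcal{F}'+\kappa\mathcal{F}\leq 0$ globally and yields \eqref{3.20}; integrating the same inequality from $t_1$ to $\infty$ and invoking \eqref{3.20} at $t_1$ produces \eqref{3.21}.

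\textbf{Energy identity.} I would differentiate the four summands of $\mathcal{F}$ along \eqref{3.2}. Exploiting $\nabla\cdot u=0$, $u|_{\partial\Omega}=0$, and the Neumann conditions to kill the transport contributions, I obtain
\begin{align*}
\tfrac{d}{dt}\int_\Omega n\ln\tfrac{n}{\overline{n_0}} &=-\int_\Omega\tfrac{|\nabla n|^2}{n}-\int_\Omega vS\cdot\nabla z\cdot\nabla n,\\
\tfrac12\tfrac{d}{dt}\int_\Omega|\nabla z|^2 &=-\int_\Omega|\Delta z|^2+\int_\Omega|\nabla z|^2\Delta z-\int_\Omega w\Delta z+\int_\Omega(u\cdot\nabla z)\Delta z,\\
\tfrac{d}{dt}\int_\Omega(w-\overline{w})^2 &=-2\int_\Omega|\nabla w|^2-2\int_\Omega(w-\overline{w})^2+2\int_\Omega(w-\overline{w})(n-\overline{n_0}),\\
\tfrac12\tfrac{d}{dt}\int_\Omega|u|^2 &=-\int_\Omega|\nabla u|^2+\int_\Omega(n-\overline{n_0})u\cdot\nabla\Phi,
\end{align*}
where $\overline{w}_t=\overline{n_0}-\overline{w}$ together with $\int_\Omega(w-\overline{w})=0$ drops the $\overline{w}_t$ contribution, and $\int_\Omega u\cdot\nabla\Phi=-\int_\Omega\Phi\nabla\cdot u=0$ permits the subtraction of $\overline{n_0}$ in the last line.

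\textbf{Absorbing cross terms and coercivity.} Each coupling term on the right must be dominated by $\tfrac18 D(t)+\mathcal{O}(m+\sqrt\delta)\mathcal{F}(t)$. The most delicate ones are handled as follows. For the chemotactic cross term, Young and \eqref{1.7b} give $|\int_\Omega vS\cdot\nabla z\cdot\nabla n|\leq\tfrac14\int_\Omega|\nabla n|^2/n+S_0^2(K)\int_\Omega n|\nabla z|^2$, and the residual $\int_\Omega n|\nabla z|^2$ is treated via Lemma~\ref{lemma2.4} combined with the 2D Gagliardo--Nirenberg estimate $\|\nabla z\|_{L^4}^4\leq C\|\nabla z\|_{L^2}^2(\|\Delta z\|_{L^2}^2+\|\nabla z\|_{L^2}^2)$. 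The cubic term is rewritten via integration by parts (using $\nabla z\cdot\nu=0$) as $\int_\Omega|\nabla z|^2\Delta z=-2\int_\Omega D^2z\cdot(\nabla z\otimes\nabla z)$, bounded by $2\|\nabla z\|_{L^4}^2\|D^2z\|_{L^2}\leq C\|\nabla z\|_{L^2}\|\Delta z\|_{L^2}^2$ (via the same Gagliardo--Nirenberg and elliptic regularity), hence absorbable into $\int_\Omega|\Delta z|^2$ once $\|\nabla z\|_{L^2}^2\leq 2\mathcal{F}\leq 2\delta$ is small. The fluid forcing $|\int_\Omega(n-\overline{n_0})u\cdot\nabla\Phi|$ is estimated by applying Lemma~\ref{lemma2.4} with $\phi=n$ and $\psi=u_i$ (noting $\overline{u_i}=0$ from $\nabla\cdot u=0$ and $u|_{\partial\Omega}=0$), essentially as in Lemma~\ref{lemma3.3}, while $\int_\Omega w\Delta z$, $\int_\Omega(u\cdot\nabla z)\Delta z$ and $\int_\Omega(w-\overline{w})(n-\overline{n_0})$ are treated analogously. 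For the coercivity $\mathcal{F}\leq C_* D$, I invoke Poincar\'e--Wirtinger on $w-\overline{w}$, Poincar\'e on $u$, elliptic regularity $\int_\Omega|\nabla z|^2\leq C\int_\Omega|\Delta z|^2$, and the log-Sobolev inequality \eqref{2.2} (with $m<m_*$) to dominate $\int_\Omega n\ln(n/\overline{n_0})$ by a controlled multiple of $\int_\Omega|\nabla n|^2/n$. Combining yields $\mathcal{F}'+\kappa\mathcal{F}+\tfrac12 D\leq 0$ on $\{\mathcal{F}\leq\delta\}$. Setting $T^*:=\sup\{t\geq t_0:\mathcal{F}(s)\leq\delta\text{ for all }s\in[t_0,t]\}$ and integrating this inequality gives $\mathcal{F}(t)\leq\delta e^{-\kappa(t-t_0)}\leq\delta$ on $[t_0,T^*)$, so by continuity $T^*=\infty$, yielding \eqref{3.20} and, upon further integration from $t_1$ to $\infty$, \eqref{3.21}.

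\textbf{Main obstacle.} The heart of the proof is the absorption step: the critical Moser--Trudinger constant $\tfrac{1}{8\pi}$ in \eqref{2.1}, the mass $m$, the threshold $\delta$, and the Gagliardo--Nirenberg constants must be calibrated jointly, and the constants must be fixed in the correct order (first $m_*$, so that the Moser--Trudinger absorptions on the $(n,u,w)$-side succeed; then $\delta$, small enough for the Gagliardo--Nirenberg absorptions involving $\|\nabla z\|_{L^2}^2\leq 2\delta$ in the $z$-equation to close). The cubic term $\int_\Omega|\nabla z|^2\Delta z$ is the prototypical difficulty and is precisely where two-dimensionality is essential: the sharp estimate $\|\nabla z\|_{L^4}^4\leq C\|\nabla z\|_{L^2}^2\|D^2z\|_{L^2}^2$ holds in 2D but degrades in higher dimensions.
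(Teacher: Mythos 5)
Your overall strategy matches the paper's: derive a conditional differential inequality $\mathcal{F}'(t)+\kappa\mathcal{F}(t)+\tfrac12 D(t)\leq 0$ valid on $\{\mathcal{F}\leq\delta\}$, then propagate $\mathcal{F}\leq\delta$ forward by continuity, integrate to get \eqref{3.20}, and integrate once more to get \eqref{3.21}. The energy identities you write down, the Gagliardo--Nirenberg absorption of $\int_\Omega|\nabla z|^4$ into $\int_\Omega|\Delta z|^2$ using $\|\nabla z\|_{L^2}^2\leq 2\mathcal{F}\leq 2\delta$, the log-Sobolev step, and the order of calibration (first $m_*$, then $\delta$) are all as in the paper.

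There is, however, a genuine gap in your treatment of the cross terms that involve $n$. You propose to estimate the fluid forcing $\int_\Omega(n-\overline{n_0})\,u\cdot\nabla\Phi$ by Lemma~\ref{lemma2.4} with $\phi=n$, $\psi=u_i$, ``essentially as in Lemma~\ref{lemma3.3},'' and to treat $\int_\Omega n|\nabla z|^2$ ``via Lemma~\ref{lemma2.4} combined with Gagliardo--Nirenberg.'' But the Moser--Trudinger inequality \eqref{2.1} carries an irreducible constant term $\tfrac{M}{a}\int_\Omega\phi=\tfrac{Mm}{a}$: no choice of $a$ removes it, and after balancing the other pieces it produces a residual of order $m^2$ on the right-hand side. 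That is exactly why Lemma~\ref{lemma3.3} only reaches $\frac{d}{dt}\int_\Omega|u|^2+\int_\Omega|\nabla u|^2\leq C_2 m\int_\Omega n\ln\frac{n}{\overline{n_0}}+C_2 m^2$. Fed into your scheme, this would give $\mathcal{F}'+\kappa\mathcal{F}\leq Cm^2$, whence $\mathcal{F}(t)\to O(m^2)$ rather than $\mathcal{F}(t)\leq\delta e^{-\kappa(t-t_0)}$, so \eqref{3.20} would not follow. (Your own target ``$\leq\tfrac18 D+\mathcal{O}(m+\sqrt\delta)\mathcal{F}$'' has no room for such a constant.) The paper avoids Lemma~\ref{lemma2.4} entirely in this proof: it bounds the fluid forcing by the elementary estimate $\|\nabla\Phi\|_{L^\infty}\int_\Omega|n-m||u|\leq\tfrac1{2C_p}\int_\Omega|u|^2+C_p\|\nabla\Phi\|_{L^\infty}^2\int_\Omega(n-m)^2$, splits $n|\nabla z|^2=m|\nabla z|^2+(n-m)|\nabla z|^2$ and applies Young, and then uses the $W^{1,1}\hookrightarrow L^2$ embedding \eqref{3.31}, which gives $\int_\Omega(n-m)^2\leq C_S m\int_\Omega\tfrac{|\nabla n|^2}{n}$; this converts all $(n-m)^2$ contributions into a multiple $C_1C_Sm$ of the dissipation $\int_\Omega\tfrac{|\nabla n|^2}{n}$, absorbable once $m<m_*$ and producing \emph{no} constant remainder. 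This device is the key ingredient your proposal is missing, and is what makes the conclusion a strict exponential decay to zero. Relatedly, your reference to the ``critical Moser--Trudinger constant $\tfrac1{8\pi}$'' plays no role in this lemma's proof; the relevant smallness here comes from $C_S$ and the log-Sobolev constant, not from $\tfrac{1}{8\pi}$.

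A few smaller points: your integration-by-parts rewrite of $\int_\Omega|\nabla z|^2\Delta z$ as $-2\int_\Omega D^2z:(\nabla z\otimes\nabla z)$ followed by Gagliardo--Nirenberg is a legitimate variant, but the paper's simpler route (Young: $\int_\Omega|\nabla z|^2\Delta z\leq\tfrac14\int_\Omega|\Delta z|^2+\int_\Omega|\nabla z|^4$, then \eqref{3.30}) is equivalent and avoids elliptic regularity for $D^2 z$. Also note that as stated Lemma~\ref{lemma2.4} controls $\int_\Omega\phi|\psi|$, not $\int_\Omega\phi\psi^2$, so ``Lemma 2.4 combined with GN'' for $\int_\Omega n|\nabla z|^2$ would need to be spelled out; the paper's split $n=m+(n-m)$ together with \eqref{3.31} is the clean way to do it.
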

\begin{proof}[Proof]
Noting the equation $\nabla\cdot u=0$ and the condition \eqref{1.7b},  upon integration by parts, one can see that for all  $t>0$,
		\begin{align}\label{3.22}
			&\frac{d}{dt}\int_\Omega n\ln\frac n{\overline{n_0}}\notag\\
&=\int_\Omega\ln n\{\Delta n+\nabla\cdot(nv S(\cdot,n,v)\cdot\nabla z)
-u\cdot\nabla n\}\notag\\
			&=-\int_\Omega\frac{|\nabla n|^2}{n}
-\int_\Omega(v \nabla n\cdot  S(\cdot,n,v))\cdot\nabla z\\
			&\leq -\frac12 \int_\Omega\frac{|\nabla n|^2}{n}
+ \frac{ S_0^2(\|v_{0}\|_{L^{\infty}(\Omega)})}2 \int_\Omega n|\nabla z|^2\notag\\
			&\leq- \frac12\int_\Omega\frac{|\nabla n|^2}{n}
+ \frac{m S_0^2(K)}2 \int_\Omega |\nabla z|^2
+
 \frac{ S_0^2(K)}4 (\int_\Omega (n-m)^2+\int_\Omega|\nabla z|^4).\notag
		\end{align}
Testing the second equation of \eqref{3.2} by $-\Delta z$ and making use of $\nabla\cdot u=0$, we have
\begin{equation}\label{3.23}
	\begin{split}
	&	\frac1 2\frac d{dt}\int_\Omega|\nabla z|^2
+\int_\Omega|\Delta z|^2\\
=&\int_\Omega(|\nabla z|^2-w+u\cdot\nabla z)\Delta z\\
=& \int_\Omega|\nabla z|^2\Delta z
-\int_\Omega (w-\overline{w})\Delta z
-\int_\Omega  \nabla z \cdot (\nabla u\cdot \nabla z)\\
\leq &
\frac1 2\int_\Omega|\Delta z|^2
+\frac3 2\int_\Omega|\nabla z|^4
+\int_\Omega(w-\overline{w})^2+\frac1 2\int_\Omega|\nabla u|^2
\quad \mbox{ for all } t>0.
	\end{split}
\end{equation}
In addition, from the $w$-equation in \eqref{3.2}, it follows that
\begin{equation}\label{3.24}
	\begin{split}
		\frac d{dt}\int_\Omega (w-\overline{w})^2
+2\int_\Omega|\nabla w|^2
&=-2\int_\Omega(w-\overline{w})^2
+2\int_\Omega (n-m)(w-\overline{w})\\
&\leq\int_\Omega (n-m)^2
-\int_\Omega(w-\overline{w})^2.
	\end{split}
\end{equation}
In view of $\nabla\cdot u=0$ and applying the Poincar\'{e} inequality
 $\int_\Omega |u|^2\leq C_p\int_\Omega|\nabla u|^2$
we get
\begin{equation}\label{3.25}
\begin{split}
\frac1 2\frac d{dt}\int_\Omega |u|^2+\int_\Omega |\nabla u|^2+\frac1{4C_p}\int_\Omega|u|^2
=&
\int_\Omega(n\nabla\Phi-(u\cdot\nabla) u)\cdot u
+\frac1{4C_p}\int_\Omega|u|^2\\
\leq
&\|\nabla\Phi\|_{L^\infty(\Omega)}
\int_\Omega|n-m||u|+\frac1{4C_p}\int_\Omega|u|^2
\\
\leq
&\frac1{2C_p}\int_\Omega|u|^2
+C_p\|\nabla\Phi\|_{L^\infty(\Omega)}^2\int_\Omega(n-m)^2
\\
\leq &\frac1{2}\int_\Omega|\nabla u|^2
+C_p\|\nabla\Phi\|_{L^\infty(\Omega)}^2
\int_\Omega(n-m)^2.
\end{split}
\end{equation}
The combination of  \eqref{3.22}--\eqref{3.25} yields
\begin{align}\label{3.26}
&\mathcal{F}'(t)+\kappa\mathcal{F}(t)
+\frac12 \int_\Omega|\Delta z|^2
+2\int_\Omega|\nabla w|^2+\frac1 2 \int_\Omega|\nabla u|^2 +\frac1{2C_p} \int_\Omega |u|^2
+\frac12\int_\Omega\frac{|\nabla n|^2}{n}\notag\\
\leq
&
C_1 \int_\Omega (n-m)^2+
(\frac{ S_0^2(K)}4+\frac3 2)  \int_\Omega|\nabla z|^4
+\kappa\int_\Omega n\ln\frac n{\overline{n_0}}
+\frac{\kappa} 2\int_\Omega(w-\bar{w})^2 \\
&+ (\kappa+\frac{m S_0^2(K)}4 )\int_\Omega|\nabla z|^2
+\kappa\int_\Omega |u|^2\notag
\end{align}
with $C_1:=2C_p\|\nabla\Phi\|_{L^\infty(\Omega)}^2+1+ \frac{ S_0^2(K)}4$.

Now let us recall several inequalities which will be employed in the sequel. The Poincar\'{e} inequality  provides  $C_p>0$  such that
\begin{equation}\label{3.27}
\int_{\Omega}|\zeta|^2\leq C_p\int_{\Omega}|\nabla\zeta|^2\quad\textrm{for any}~\zeta\in C^1(\overline\Omega)~\textrm{with}~\int_\Omega\zeta=0~~
\end{equation}
as well as
\begin{equation}\label{3.28}
\int_{\Omega}|\nabla\zeta|^2\leq C_p\int_{\Omega}|\Delta \zeta|^2\quad\textrm{for each}~\zeta\in C^2(\overline\Omega)~\textrm{with}~\frac{\partial\zeta}{\partial\nu}=0
~\textrm{on}~\partial\Omega.
\end{equation}\label{3.29}
From the Gagliardo--Nirenberg inequality, there exists $C_{GN}>0$ such that
\begin{equation}\label{3.30}
\int_{\Omega}|\nabla \zeta|^4\leq C_{GN}\left(\int_{\Omega}|\Delta \zeta|^2\right)\cdot\int_{\Omega}|\nabla \zeta|^2\quad\textrm{for all}~\zeta\in C^2(\overline\Omega)~\textrm{with}~\frac{\partial\zeta}{\partial\nu}=0~\textrm{on}~\partial\Omega.
\end{equation}
The embedding $W^{1,1}(\Omega)\hookrightarrow L^2(\Omega)$  ensures the existence of $C_S>0$ such that
\begin{equation}\label{3.31}
\int_{\Omega}(\zeta-\bar\zeta)^2\leq C_S\left(\int_{\Omega}|\nabla \zeta|\right)^2\leq C_S\left(\int_{\Omega} \zeta\right)\cdot\int_{\Omega}\frac{|\nabla \zeta|^2}{\zeta}\quad
\textrm{for any}~0<\zeta\in C^1(\overline\Omega),
\end{equation}
where the second inequality follows from the H\"{o}lder inequality.
In addition, the logarithmic Sobolev inequality yields $C_{LS}>0$ fulfilling
\begin{equation}
\int_{\Omega}\zeta\ln\frac{\zeta}{\overline\zeta}\leq C_{LS}\int_{\Omega}\frac{|\nabla \zeta|^2}{\zeta}\quad\textrm{for all}~0<\zeta\in C^1(\overline\Omega).
\end{equation}

Applying the above inequalities in  \eqref{3.26}, we arrive at
\begin{equation}\label{3.32}
\begin{split}
&\mathcal{F}'(t)+\kappa\mathcal{F}(t)
+\big(\frac12-(\frac{mS_0^2(K)}4+\kappa )C_p-(\frac{C_{GN}S_0^2(K)}4+\frac3 2)\|\nabla z\|_{L^2}^2 \big)\int_\Omega|\Delta z|^2
+\frac14\int_\Omega|\nabla u|^2
\\
\leq & (\frac \kappa 2 C_p-2) \int_\Omega|\nabla w|^2
+(C_1C_S m+\kappa C_{LS}-\frac 12)\int_\Omega\frac{|\nabla n|^2}{n}
+(\kappa-\frac1{2C_p})\int_\Omega|u|^2.
\end{split}
\end{equation}
At the point, we pick
$$\kappa:=\min\{\frac1{8C_{LS}},\frac 1{ 8 C_p}\},\quad m_*:=\min\{\frac1{8C_1C_S},\frac1{2S_0^2(K)C_p}\}.$$
With these choices, and taking into account the fact that the nonnegativity of $\int_\Omega\ln\frac n{\overline{n_0}}$ implies $2\mathcal{F}\geq\|\nabla z(\cdot,t)\|_{L^2(\Omega)}^2$, we get
\begin{equation}\label{3.33}
\begin{split}
\mathcal{F}'(t)+&\kappa\mathcal{F}(t)
+\frac1 4\int_\Omega\frac{|\nabla n|^2}{n}
+\big(\frac1 4-(\frac {C_{GN}S_0^2(K)}4+\frac 32)\mathcal{F}(t)\big)\int_\Omega|\Delta z|^2\\
&+\int_\Omega|\nabla w|^2
+\frac1 4\int_\Omega|\nabla u|^2
+\frac1{8C_p}\int_\Omega|u|^2
\leq0.
\end{split}
\end{equation}
Now it can be concluded that  whenever \eqref{3.19} is valid with
$$\delta:=\frac1{6(C_{GN}S_0^2(K)+6)},$$
one will have
\begin{equation}\label{3.34}
\begin{split}
\mathcal{F}'(t)+&\kappa\mathcal{F}(t)
+\frac1 4\int_\Omega\frac{|\nabla n|^2}{n}
+\frac1 8\int_\Omega|\Delta z|^2+\int_\Omega|\nabla w|^2
+\frac1 4\int_\Omega|\nabla u|^2\leq0
\end{split}
\end{equation}
 for all $t>t_0$. Indeed, introducing
\begin{equation*}
	T^*:=\sup\big\{T>t_{0}\big|\mathcal{F}(t)<
\frac1{5(C_{GN}S_0^2(K)+6)}~\mbox{for all}~  t\in[t_{0},T)\big\},
\end{equation*}
we actually have $T^*=\infty$. To achieve this, assuming that  $T^*<\infty$, we then  have $\mathcal{F}(T^*)=\frac1{5(C_{GN}S_0^2(K)+6)}$ due to the continuity of
$\mathcal{F}$ with respect to $t$ and the fact that $\mathcal{F}(t)<\frac1{5(C_{GN}S_0^2(K)+6)}$ for all $t\in (t_{0},T^*)$,
whereupon  the combination of \eqref{3.19} and \eqref{3.34} readily leads to
$\mathcal{F}(t)<\mathcal{F}(t_0)\leq \frac1{6(C_{GN}S_0^2(K)+6)}$   for all $t\in (t_{0},T^*)$, which is absurd due to
$\mathcal{F}(T^*)=\frac1{5(C_{GN}S_0^2(K)+6)}$.
Hence
 the inequality in \eqref{3.20} readily  results from an ODE comparison argument. Furthermore, for any $t_1>t_0$,
 we integrate  \eqref{3.34} over $(t_1,\infty)$ to obtain
 \begin{equation*}
 	\begin{split}
& \int^\infty_{t_1}
\int_{\Omega}
\biggl(\frac{|\nabla n|^{2}}{n}(\cdot,s)+
|\Delta z(\cdot,s)|^{2}+|\nabla w(\cdot,s)|^{2}
+|\nabla u(\cdot,s)|^{2}\biggr)ds \\
\leq &8
\mathcal{F}(t_1)
\leq  8\delta e^{-\kappa (t_1-t_0)}~~\mbox{ for all } t_1>t_0,
\end{split}	\end{equation*}
 which establishes \eqref{3.21} with $C:=8\delta$.
\end{proof}

Making use of the dependence on $m$ of the
estimates established in Lemma \ref{lemma3.1}--Lemma \ref{lemma3.3},
we shall now show that \eqref{3.19} holds upon a condition on $m$.

\begin{lemma}\label{lem3.6}
There exist $m_0\leq m_*$, $\kappa>0$ and $C>0$ such that for all $(n_0,v_0,w_0,u_0)$ satisfying \eqref{1.8} with $\int_\Omega n_0<m_0$, we can find $t_0^*=t_0^*(n_0,v_0,w_0,u_0)>0$ having the properties that
\begin{equation}\label{3.35}
		\int_\Omega n(\cdot,t)\ln\frac{n(\cdot,t)}{\overline{n_{0}}}\leq e^{-\kappa(t-t_0^*)},
	\end{equation}
\begin{equation}\label{3.36}
	\int_\Omega|\nabla z(\cdot,t)|^2\leq e^{-\kappa(t-t_0^*)},
\end{equation}
\begin{equation}\label{3.37}
	\int_{\Omega}|w(\cdot,t)-\overline{w}(t)|^2\leq e^{-\kappa(t-t_0^*)},
\end{equation}
and
\begin{equation}\label{3.38}
	\int_{\Omega}|u(\cdot,t)|^2\leq e^{-\kappa(t-t_0^*)},
\end{equation}
as well as
\begin{equation}\label{3.39}
	\int_t^\infty\int_\Omega\frac{|\nabla n|^2}{n}\leq Ce^{-\kappa(t-t_0^*)},
\end{equation}
\begin{equation}\label{3.40}
		\int_{t}^{\infty}\int_{\Omega}|\Delta z|^{2}\leq Ce^{-\kappa(t-t_0^*)},
\end{equation}
\begin{equation}\label{3.41}
		\int_{t}^{\infty}\int_{\Omega}|\nabla w|^{2}\leq Ce^{-\kappa(t-t_0^*)},
\end{equation}
and
\begin{equation}\label{3.42}
		\int_{t}^{\infty}\int_{\Omega}|\nabla u|^{2}\leq Ce^{-\kappa(t-t_0^*)}
\end{equation}
for all $t>t_0^*$.
\end{lemma}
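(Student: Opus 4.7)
The aim is to produce, for $m := \int_\Omega n_0$ sufficiently small, a time $t_0^* > 0$ at which $\mathcal{F}(t_0^*) \leq \delta$, where $\delta$ is the threshold supplied by Lemma~\ref{lem3.5}. Once such a $t_0^*$ is in hand, all eight assertions \eqref{3.35}--\eqref{3.42} follow immediately: the four pointwise decays \eqref{3.35}--\eqref{3.38} drop out of \eqref{3.20} together with the fact that $\mathcal{F}$ dominates each of its four nonnegative summands, while the integral decays \eqref{3.39}--\eqref{3.42} come from \eqref{3.21}.

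To produce $t_0^*$, I argue by a pigeonhole principle on an interval $(t_1,t_1+T)$, showing that for $m$ small and $t_1,T$ suitably large (with $T \gg t_1+1$) the time average of $\mathcal{F}$ over $(t_1,t_1+T)$ is strictly below $\delta$. For the first two summands of $\mathcal{F}$, the averaged bounds \eqref{3.4} and \eqref{3.9} already give
$$\frac{1}{T}\int_{t_1}^{t_1+T}\!\!\int_\Omega |\nabla z|^{2} + \frac{1}{T}\int_{t_1}^{t_1+T}\!\!\int_\Omega n\ln\frac{n}{\overline{n_0}} \;\leq\; \frac{C(t_1+1)}{T} + C\,e^{-t_1}\!\int_\Omega w_0 + C\bigl(m+m\ln_+\tfrac{|\Omega|}{m}\bigr),$$
and the right-hand side can be made arbitrarily small since $s\ln\tfrac{1}{s}\to 0$ as $s\to 0^+$.

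The essential technical point is the derivation of analogous averaged estimates for $\int_\Omega(w-\overline{w})^2$ and $\int_\Omega|u|^2$, which are not immediately available from Section~3. Testing the $w$-equation in \eqref{3.2} against $w-\overline{w}$ (exploiting $\nabla\cdot u=0$ and $u|_{\partial\Omega}=0$ to kill the convective term) and applying Lemma~\ref{lemma2.4} to the cross term $\int n|w-\overline{w}|$ with parameter $a$ of order $m^{-1}$, I obtain an inequality of the form
$$\frac{d}{dt}\int_\Omega(w-\overline{w})^2 + c_1\!\int_\Omega|\nabla w|^2 + c_1\!\int_\Omega(w-\overline{w})^2 \;\leq\; c_2 m\!\int_\Omega n\ln\frac{n}{\overline{n_0}} + c_2 m^2,$$
and integrating this over $(t_1,t_1+T)$ together with the linear-in-$t_1$ bound $\int_\Omega w^2(t_1) \leq C_3(t_1+1)$ from Lemma~\ref{lemma3.4} produces a small time-averaged bound for $\int(w-\overline{w})^2$. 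For $\int|u|^2$, I combine Lemma~\ref{lemma3.3} with the Poincar\'e inequality $\int|u|^2 \leq C_p\int|\nabla u|^2$ to derive a differential inequality of the same structure, and use \eqref{3.17} to control $\int_\Omega |u|^2(t_1) \leq C(m)(t_1+1)$, again yielding a small average.

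Summing the four averaged estimates, I choose $m_0$ sufficiently small and $t_1,T$ large with $T \gg t_1$ so that the time average of $\mathcal{F}$ over $(t_1,t_1+T)$ is strictly less than $\delta/4$; Markov's inequality applied to the four nonnegative summands of $\mathcal{F}$ then furnishes some $t_0^* \in (t_1,t_1+T)$ at which $\mathcal{F}(t_0^*) \leq \delta$, and Lemma~\ref{lem3.5} concludes the proof. The principal obstacle is the control of the initial-data contributions $\int_\Omega |u|^2(t_1)$ and $\int_\Omega w^2(t_1)$, which a priori grow only linearly in $t_1$; this is overcome precisely by arranging $T$ to be much larger than $t_1$ in the pigeonhole step, so that the residue $C(t_1+1)/T$ becomes negligible.
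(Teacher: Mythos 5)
Your proposal follows essentially the same route as the paper: combine the time-averaged bounds from Lemmas~\ref{lemma3.1}--\ref{lemma3.3} and equation~\eqref{3.9} (plus the analogous averaged estimate for $\int(w-\overline w)^2$ coming from a $w$-testing step), take $m$ small and $t_2 - t_1 \gg t_1+1$ so the time average of $\mathcal F$ over $(t_1,t_2)$ falls below $\delta$, invoke the pigeonhole to locate a time where $\mathcal F \le \delta$, and close with Lemma~\ref{lem3.5}. The structure of the $w$- and $u$-estimates and the handling of the linear-in-$t_1$ initial terms are all the same.

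There is, however, one omission you should not skip: the time that the pigeonhole produces depends on the approximation parameter $\varepsilon$, because it is found inside the trajectory of the particular $\varepsilon$-regularized solution $(n_\varepsilon,z_\varepsilon,w_\varepsilon,u_\varepsilon)$. But the lemma asserts that $t_0^* = t_0^*(n_0,v_0,w_0,u_0)$ is independent of $\varepsilon$, and this independence is used crucially downstream (Lemmas~\ref{lem3.7}--\ref{lem3.10} and the compactness/limit argument in Lemma~\ref{lemma3.11} require $\varepsilon$-uniform bounds on an interval whose left endpoint does not move with $\varepsilon$). The paper handles this with an extra step: from $\mathcal F(\hat t_0)<\delta$ at the pigeonhole time $\hat t_0\in(t_1,t_2)$, Lemma~\ref{lem3.5} gives monotone-type decay of $\mathcal F$ forward in time, hence $\mathcal F(t_2)<\delta$ at the \emph{deterministic} right endpoint $t_2$; one then applies Lemma~\ref{lem3.5} a second time with $t_0:=t_2$ and takes $t_0^*:=t_2$, which is $\varepsilon$-free by construction. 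You should append this relay step; without it your $t_0^*$ is not the object the lemma claims, and the subsequent $\varepsilon\to 0$ passage would break.
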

\begin{proof}[Proof]  Using the solenoidality of $u$, we  may test the third equation in (\ref{3.2}) against $w-\overline{w}$ to see that
\begin{equation}\label{3.43}
\begin{split}
\frac1 2 \frac d{dt}&\int_\Omega(w-\overline{w})^2
+\int_\Omega|\nabla w|^2
+\int_\Omega(w-\overline{w})^2
+\int_\Omega u\cdot\nabla(w-\overline{w})(w-\overline{w})\\
&=\int_\Omega(n-\overline{n})(w-\overline{w})\\
&\leq\int_\Omega n(w-\overline{w})+m\int_\Omega|w-\overline{w}|.
\end{split}
\end{equation}
According to Lemma \ref{lemma2.4} with $\psi:=w-\overline{w}$, $\phi:=n$, $M:=M(1,\Omega)$ and $a:=1$, we obtain
\begin{equation}\label{3.44}
\begin{split}
\int_\Omega n(w-\overline{w})
&\leq\int_\Omega n|w-\overline{w}|\\
&\leq\int_\Omega n\ln\frac n{\overline{n_0}}
+\frac m{4\pi}\int_\Omega\nabla w|^2
+Mm \cdot \biggl\{\int_\Omega|w-\overline{w}| \biggr\}^2 +Mm.\\
\end{split}
\end{equation}
Therefore  substituting (\ref{3.40}) into (\ref{3.39}) and by the Young inequality, we have
\begin{equation}\label{3.45}
\begin{split}
\frac1 2 \frac d{dt}
&\int_\Omega(w-\overline{w})^2
+(1-\frac m{4\pi})\int_\Omega|\nabla w|^2+
\frac3 4\int_\Omega(w-\overline{w})^2\\
&\leq\int_\Omega n\ln\frac n{\overline{n_0}}
+Mm\biggl\{\int_\Omega|w-\overline{w}| \biggr\}^2 +Mm+m^2\\
&\leq\int_\Omega n\ln\frac n{\overline{n_0}}
+Mm|\Omega|\int_\Omega|w-\overline{w}|^2+Mm+m^2.
\end{split}
\end{equation}
Furthermore, if $m<\min\{4\pi,\frac1{4M|\Omega|}\}$, then the Young inequality shows that
\begin{equation}\label{3.46}
\begin{split}
\frac d{dt}
&\int_\Omega(w-\overline{w})^2+\int_\Omega(w-\overline{w})^2\\
&\leq2\int_\Omega n\ln\frac n{\overline{n_0}}+2Mm+2m^2.
\end{split}
\end{equation}
Thanks to  \eqref{3.9} and \eqref{3.15}, integrating the above inequality with respect to time over $(t_1,t_2)$ yields
 \begin{align}\label{3.47}
&\frac1{t_2-t_1}\int_{t_1}^{t_2}\int_\Omega(w-\overline{w})^2\notag\\
&\leq\frac2{t_2-t_1}\int_{t_1}^{t_2}
\int_\Omega n\ln\frac n{\overline{n_0}}+2Mm+2m^2
+\frac1{t_2-t_1}\int_\Omega(w(\cdot,t_1)-\overline{w}(\cdot,t_1))^2\notag\\
&\leq\frac{m C_1(t_1+1)}{t_2-t_1}+(8Mm^2+2m+4M)m
+(m+e^{-t_1}\int_\Omega w_0)mS_0^2(K)\\
&\quad+4m\ln_{+}\frac{|\Omega|}{m}
+\frac{\|w(\cdot,t_1)-\overline{w}(\cdot,t_1)\|_{L^2}^2}{t_2-t_1}\notag\\
&\leq\frac{(m C_1+ C_3)(t_1+1)}{t_2-t_1}+(8Mm^2+2m+4M)m
+(m+e^{-t_1}\int_\Omega w_0)mS_0^2(K)+4m\ln_{+}\frac{|\Omega|}{m}
.\notag
\end{align}
In addition, using \eqref{3.9} again, integrating \eqref{3.11} with respect to time over $(t_1,t_2)$ yields
\begin{align}\label{3.48}
&\frac1{t_2-t_1}\int_{t_1}^{t_2}\int_\Omega|u|^2\notag\\
&\leq\frac{C_p}{t_2-t_1}\int_{t_1}^{t_2}\int_\Omega|\nabla u|^2\notag\\
&\leq\frac{C_p}{t_2-t_1}\int_\Omega|u(\cdot,t_1)|^2
+\frac{C_2 C_pm}{t_2-t_1}\int_{t_1}^{t_2}\int_\Omega
 n\ln\frac n{\overline{n_0}}+C_2 C_pm^2\\
&\leq\frac{C_p\|u(\cdot,t_1)\|_{L^2}^2}{t_2-t_1}
+C_2 C_pm^2\notag\\
&\quad+C_2 C_pm\bigg\{\frac{C_1(t_1+1)}{t_2-t_1}+(4Mm^2+M)m
+(m+e^{-t_1}\int_\Omega w_0)mS_0^2(K)+2m\ln_{+}\frac{|\Omega|}{m}\bigg\}.\notag
\end{align}
On the other hand,  similar to the proof  of \eqref{3.15},  thanks to \eqref{3.9}, one can conclude that there exists
 $C_4=C_4(m)>0$ such that  for all $t>0$
\begin{equation*}
\int_\Omega |u|^2(\cdot,t)
\leq C_4 (t+1),
\end{equation*}
and hence
 \begin{align}\label{3.49}
\frac1{t_2-t_1}\int_{t_1}^{t_2}\int_\Omega|u|^2&\leq\frac{C_p(C_4+m C_1C_2)(t_1+1)}{t_2-t_1}+
C_2 C_pm\bigg\{(4Mm^2+M+1)m\notag\\
&\quad+(m+e^{-t_1}\int_\Omega w_0)mS_0^2(K)+2m\ln_{+}\frac{|\Omega|}{m}\bigg\}.
 \end{align}
Now from \eqref{3.9}, \eqref{3.4}, (\ref{3.47}) and \eqref{3.49}, it follows that there is $C_5=C_5(m)>0$ such that
\begin{equation}\label{3.50}
\begin{split}
&\frac1{t_2-t_1}\bigg\{\int_{t_1}^{t_2}
\int_\Omega n\ln\frac n{\overline{n_0}}
+\frac1 2\int_{t_1}^{t_2}\int_\Omega|\nabla z|^2
+\int_{t_1}^{t_2}\int_\Omega(w-\overline{w})^2
+\int_{t_1}^{t_2}\int_\Omega |u|^2\bigg\}\\
\leq&\frac{C_5(t_1+1)}{t_2-t_1}+
(3+C_2C_pm)
((4Mm^2+M)m
+2m\ln_{+}\frac{|\Omega|}{m})\\
&
+
(2M+1)m+
\{C_2C_p+2+(2+C_2C_pm)S_0^2(K)\} m^2\\
&
+\{(2+C_2C_pm)mS_0^2(K)+1\}\int_\Omega w_0 \cdot e^{-t_1}
\end{split}
\end{equation}
for any $0<t_1<t_2 $.

Due to $\lim\limits_{m\to 0}m\ln_+\frac{|\Omega|}{m}=0$, one can pick $m_0<m_*$ sufficiently small such that
$$
(3+C_2C_pm)
((4Mm^2+M)m
+2m\ln_{+}\frac{|\Omega|}{m})
+
(2M+1)m+
\{C_2C_p+2+(2+C_2C_pm)S_0^2(K)\} m^2
<\frac \delta 2,
$$
for all $m<m_0$.
At the same time, we can choose $t_1>0$ large enough such that
$$\{(2+C_2C_pm)mS_0^2(K)+1\}\int_\Omega w_0 \cdot e^{-t_1}
\leq\frac\delta 4,$$ and  then fix $t_2>t_1$ fulfilling $\frac{C_5(t_1+1)}{t_2-t_1}<\frac \delta 4$.
Hence, \eqref{3.15} implies that the functional $\mathcal{F}$ defined through \eqref{3.18} satisfies
\begin{equation}\label{3.51}
\frac1{t_2-t_1}\int_{t_1}^{t_2}\mathcal{F}(t)dt\leq\delta,
\end{equation}
which allows us to find some $\hat{t}_0=\hat{t}_0(n_0,v_0,w_0,u_0,\varepsilon)\in(t_1,t_2)$ such that  $\mathcal{F}(\hat{t}_0)<\delta$ is valid.
Subsequently, as an application of Lemma \ref{lem3.5} and due to $t_2\geq\hat{t}_0$,
 we get  $\mathcal{F}(t_2)<\delta$ immediately. Applying Lemma \ref{lem3.5} to $t_0:=t_2$ once more, we
     readily obtain \eqref{3.35}--\eqref{3.38} and \eqref{3.39}--\eqref{3.42} with some appropriately large $C>0$ and $t_0^*=t_2$.
\end{proof}

Next, we will give a series of estimates pertaining to the exponential stabilization properties of the solutions to \eqref{2.4} via a bootstrap procedure.
We remark that the derived estimates are shown to be independent of the parameter $\varepsilon>0$, which allows us to extend the convergence arguments for
\eqref{2.4} to \eqref{1.6}.
As a starting point of the bootstrap procedure,  we utilize Lemma \ref{lem3.6} to establish time-independent bounds for $\int_\Omega n^2$, $\int_\Omega|\nabla u|^2$ and $\int_{\Omega}|\nabla z|^{4}$, which 
will pave the way for subsequent regularity estimates that are needed to complete the proof of Theorem 1.1.

\begin{lemma}\label{lem3.7}
There exists $C>0$ such that if $m_*$ is as in Lemma \ref{lem3.6} and \eqref{1.8} holds with $\int_\Omega n_0<m_*$, then we have
\begin{equation}\label{3.52}
\int_\Omega n^2(\cdot,t)\leq C\mbox{ for all }t>t_1^*:=t_0^*+1.
\end{equation}
\end{lemma}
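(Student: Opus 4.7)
The plan is to establish \eqref{3.52} through a direct $L^2$ energy estimate for $n$, closed with the help of the exponential decay information from Lemma \ref{lem3.6}. First I would test the first equation in \eqref{3.2} against $n$. Because $\nabla\cdot u=0$ annihilates the convective contribution, and the sensitivity bound \eqref{1.7b} together with Young's inequality yields
\[
\Bigl|\int_\Omega n\,v\,(S(\cdot,n,v)\cdot\nabla z)\cdot\nabla n\Bigr|\leq S_0(K)\int_\Omega n|\nabla z||\nabla n|\leq \tfrac12\int_\Omega|\nabla n|^2+\tfrac12 S_0^2(K)\int_\Omega n^2|\nabla z|^2,
\]
one arrives at
\[
\frac{d}{dt}\int_\Omega n^2+\int_\Omega|\nabla n|^2\leq S_0^2(K)\int_\Omega n^2|\nabla z|^2.
\]

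To control the right-hand side I would apply H\"older's inequality together with the two-dimensional Gagliardo--Nirenberg inequalities
\[
\|n\|_{L^4}^2\leq C\|\nabla n\|_{L^2}\|n\|_{L^2}+Cm^2, \qquad \|\nabla z\|_{L^4}^2\leq C\|\Delta z\|_{L^2}\|\nabla z\|_{L^2}+C\|\nabla z\|_{L^2}^2,
\]
and then invoke Young's inequality to absorb a small multiple of $\|\nabla n\|_{L^2}^2$ into the dissipation. Combining this with the Poincar\'e--Wirtinger bound $\|n\|_{L^2}^2\leq C_p\|\nabla n\|_{L^2}^2+m^2/|\Omega|$ to convert the remaining diffusive term into a $-c\int_\Omega n^2$ contribution, I obtain a differential inequality of the form
\[
\frac{d}{dt}\int_\Omega n^2+c\int_\Omega n^2\leq \phi(t)\int_\Omega n^2+C\bigl(1+\psi(t)\bigr),
\]
with $\phi(t)\sim\|\nabla z\|_{L^2}^2\|\Delta z\|_{L^2}^2+\|\nabla z\|_{L^2}^4$ and $\psi(t)\sim\|\nabla z\|_{L^2}\|\Delta z\|_{L^2}+\|\nabla z\|_{L^2}^2$. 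By \eqref{3.36} and \eqref{3.40} (together with Cauchy--Schwarz for the mixed term) both $\phi$ and $\psi$ lie in $L^1((t_0^*,\infty))$ with $\varepsilon$-independent norms.

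To launch Gronwall's inequality I still need an initial value at some $\hat t\in(t_0^*,t_0^*+1)$. Here I would combine the two-dimensional embedding $W^{1,1}(\Omega)\hookrightarrow L^2(\Omega)$ with the Cauchy--Schwarz estimate $\int_\Omega|\nabla n|\leq m^{1/2}\bigl(\int_\Omega|\nabla n|^2/n\bigr)^{1/2}$ to deduce
\[
\int_\Omega n^2\leq Cm\int_\Omega\frac{|\nabla n|^2}{n}+Cm^2;
\]
integrating this over $(t_0^*,t_0^*+1)$ and appealing to \eqref{3.39} produces, via the mean value theorem, an admissible $\hat t$ at which $\int_\Omega n^2(\hat t)$ admits an $\varepsilon$-independent bound. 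A Gronwall comparison starting from $\hat t$ then propagates a uniform bound to every $t>\hat t$, and in particular to $t>t_1^*=t_0^*+1$.

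The principal obstacle I foresee is that, unlike in the scalar-sensitivity case, the cross term $\int_\Omega n^2|\nabla z|^2$ admits no favourable cancellation, so the whole argument must lean on the exponential smallness of $\|\nabla z(\cdot,t)\|_{L^2}$ delivered by Lemma \ref{lem3.6} in order to secure integrability of $\phi$ and $\psi$, and thereby to close the Gronwall loop with constants that are independent of the regularisation parameter $\varepsilon$.
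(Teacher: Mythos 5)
Your argument is correct and tracks the paper's proof in all its essential steps: testing the $n$-equation against $n$, controlling the cross term $\int_\Omega n^2|\nabla z|^2$ via $\|n\|_{L^4}^2\|\nabla z\|_{L^4}^2$ and Gagliardo--Nirenberg, absorbing $\|\nabla n\|_{L^2}^2$ by Young, and locating an admissible starting time from the integral bound \eqref{3.39} on $\int_\Omega|\nabla n|^2/n$ through the embedding $W^{1,1}(\Omega)\hookrightarrow L^2(\Omega)$. The only difference is tactical, in the closure of the Gronwall step: the paper picks, for each $t>t_0^*+1$, a fresh starting time $t_0=t_0(t)\in(t-1,t)$ and integrates the inequality $\frac{d}{dt}\int_\Omega n^2\leq\big(c_1\|\Delta z\|_{L^2}^2+c_2\big)\int_\Omega n^2$ only over the short interval $(t_0,t)$, so the non-integrable constant $c_2$ contributes at most $c_2$ to the exponent and no Poincar\'e dissipation on $n$ is needed; your version instead extracts a $-c\int_\Omega n^2$ term via the Poincar\'e--Wirtinger inequality and runs Gronwall globally from a single $\hat t$, relying on the $L^1((t_0^*,\infty))$ bound on the multiplier, which indeed follows from \eqref{3.36} and \eqref{3.40}. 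Both closures give an $\varepsilon$-uniform bound. One small imprecision worth noting: the displayed estimate $\|n\|_{L^4}^2\leq C\|\nabla n\|_{L^2}\|n\|_{L^2}+Cm^2$ is not a standard Gagliardo--Nirenberg form; you should either keep the lower-order term as $\|n\|_{L^2}^2$ (as in \eqref{3.57}) and then convert it via Poincar\'e--Wirtinger, or use the $L^1$-interpolant $\|n\|_{L^4}^2\leq C\|\nabla n\|_{L^2}^{3/2}m^{1/2}+Cm^2$. This does not affect the validity of your scheme.
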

\begin{proof}[Proof]
From Lemma \ref{lem3.6}, it follows that there is $C_1>0$  such that
\begin{equation}\label{3.53}
\int_\Omega|\nabla z|^2\leq 1\,\,\mbox{ for all }\,t>t_0^*,
\end{equation}
\begin{equation}\label{3.54}
\int_{t_0^*}^\infty\int_\Omega\frac{|\nabla n|^2}{n}\leq C_1
\end{equation}
and
\begin{equation}\label{3.55}
\int_{t_0^*}^\infty\int_\Omega|\Delta z|^2\leq C_1.
\end{equation}
Moreover, we employ the Sobolev inequality and the Gagliardo-Nirenberg inequality along with elliptic theory to find $C_2>0,C_3>0$ and $C_4>0$ fulfilling
\begin{equation}\label{3.56}
\int_\Omega\varphi^2\leq C_2\bigg\{\int_\Omega|\nabla\varphi|\bigg\}^2
+C_2\bigg\{\int_\Omega|\varphi|\bigg\}^2\mbox{ for all }\varphi\in W^{1,1}(\Omega)
\end{equation}
and
\begin{equation}\label{3.57}
\|\varphi\|_{L^4}^2\leq C_3\|\nabla\varphi\|_{L^2}\|\varphi\|_{L^2}
+C_3\|\varphi\|_{L^2}^2\mbox{ for all }\varphi\in W^{1,2}(\Omega)
\end{equation}
as well as
\begin{equation}\label{3.58}
\|\nabla\varphi\|_{L^4}^2\leq C_4\|\Delta\varphi\|_{L^2}
\|\nabla\varphi\|_{L^2}
\mbox{ for all }\varphi\in W^{2,2}(\Omega)
\mbox{ such that }
\frac{\partial\varphi}{\partial\nu}=0
\mbox{ on }\partial\Omega.
\end{equation}

For any given  $t>t_0^*+1$, one can find $t_0=t_0(n_0,v_0,w_0,u_0,\varepsilon)\in(t-1,t)$ in such a way that in accordance with (\ref{3.54}),
\begin{equation}
\label{3.59}
\int_\Omega\frac{|\nabla n(\cdot,t_0)|^2}{n(\cdot,t_0)}\leq C_1.
\end{equation}

On the other hand, testing the first equation in (\ref{3.2}) by $n$ and once more using that $\nabla\cdot u=0$ and \eqref{1.7b}, we obtain
\begin{equation}\label{3.60}
\begin{split}
\frac1 2\frac d{dt}\int_\Omega n^2+\int_\Omega|\nabla n|^2
&=\int_\Omega\nabla\cdot(nvS(\cdot,n,v)\cdot\nabla z)n\\
&\leq S_0(\|v_{0}\|_{L^{\infty}(\Omega)})\int_\Omega|n\nabla z\cdot\nabla n|\\
&\leq\frac1 2\int_\Omega|\nabla n|^2
+\frac1 2S^2_0(K)\int_\Omega n^2|\nabla z|^2\\
\end{split}
\end{equation}
for all $t>0$.

By \eqref{3.57}, \eqref{3.58} and the Young inequality, we can estimate
\begin{equation*}
\begin{split}
&\frac1 2S^2_0(K)\int_\Omega n^2|\nabla z|^2\\
&\leq\frac1 2S^2_0(K)\|n\|_{L^4}^2\|\nabla z\|_{L^4}^2\\
&\leq\frac{C_3C_4S^2_0(K)}{2}
\bigg\{\|\nabla n\|_{L^2}\|n\|_{L^2}
+\|n\|_{L^2}^2\bigg\}\cdot\|\Delta z\|_{L^2}\|\nabla z\|_{L^2}\\
&\leq\frac{C_3C_4}{2}S^2_0(K)\|\nabla n\|_{L^2}\|n\|_{L^2}\|\Delta z\|_{L^2}
+\frac{C_3C_4}{2}S^2_0(K)\|n\|_{L^2}^2\|\Delta z\|_{L^2}\\
&\leq\frac 1 2\int_\Omega|\nabla n|^2
+\frac{C_3^2C_4^2}{8}S^4_0(K)
\bigg\{\int_\Omega|\Delta z|^2\bigg\}\int_\Omega n^2
+\frac{C_3C_4}{2}S^4_0(K)
\bigg\{\int_\Omega|\Delta z|^2\bigg\}^{\frac1 2}\int_\Omega n^2\\
&\leq\frac 1 2\int_\Omega|\nabla n|^2
+\frac{C_3^2C_4^2}{4}S^4_0(K)
\bigg\{\int_\Omega|\Delta z|^2\bigg\}\int_\Omega n^2
+\frac1{2}S^4_0(K)\int_\Omega n^2
\end{split}
\end{equation*}
for all $t>0$. Therefore, from (\ref{3.60}) we obtain that
\begin{equation}\label{3.61}
\frac d{dt}\int_\Omega n^2\leq S^4_0(K)\bigg\{\frac{C_3^2C_4^2}{2}\int_\Omega|\Delta z|^2+1\bigg\}
\int_\Omega n^2\mbox{ for all } t>t_0^*.
\end{equation}
Now thanks to \eqref{3.56} and \eqref{3.59},  an application of the Cauchy-Schwarz inequality leads to
\begin{equation}
\begin{split}
\int_\Omega n(\cdot,t_0)^2
&\leq C_2\bigg\{\int_\Omega|\nabla n(\cdot,t_0)|\bigg\}^2
+C_2\bigg\{\int_\Omega|n(\cdot,t_0)|\bigg\}^2\\
&\leq C_2\bigg\{\int_\Omega n(\cdot,t_0)\bigg\}
\cdot\int_\Omega\frac{|\nabla n(\cdot,t_0)|^2}{n(\cdot,t_0)}
+C_2\bigg\{\int_\Omega|n(\cdot,t_0)|\bigg\}^2\\
&\leq C_5:=C_1C_2m_*+C_2m_*^2,
\end{split}
\end{equation}
hence integrating \eqref{3.61} with respect to time over $(t_0,t)$, we arrive at
\begin{equation*}
\begin{split}
\int_\Omega n(\cdot,t)^2
&\leq\bigg\{\int_\Omega n(\cdot,t_0)^2\bigg\}
\cdot\exp
\bigg\{S^4_0(K)\frac{C_3^2C_4^2}{2}\int_{t_0}^{t}
\int_\Omega|\Delta z|^2+(t-t_0)\bigg\}\\
&\leq C_5\cdot\exp\bigg\{S^4_0(K)\frac{C_1C_3^2C_4^2}{2}+1\bigg\},
\end{split}
\end{equation*}
due to $t-t_0\leq 1$. As $t>t_0^*+1$ is arbitrary, this establishes (\ref{3.52}) with $t_1:=t_0^*+1$ and $C:=C_5\exp\{S^4_0(K)C_1C_3^2C_4^2+1\}$.
\end{proof}

Furthermore, on the basis of the bounds provided by Lemma 3.7, one can improve the regularity of the fluid variable $u$
by  means of a suitably organized bootstrap procedure.

\begin{lemma}\label{lem3.8}
Let $\beta\in(\frac1 2,1)$. Then one can find $C>0$ and $C(\beta)>0$  such that for all $(n_0,v_0,w_0,u_0)$ satisfying (\ref{1.8}) with $\int_\Omega n_0<m_0$,
\begin{equation}\label{3.63}
\int_\Omega|\nabla u(\cdot,t)|^2\leq C
\end{equation}
as well as
$$\int_\Omega|A^\beta u(\cdot,t)|^2 \leq C(\beta)
$$
for all $t>t_1^*$ which is  given as  in  Lemma \ref{lem3.7}.
\end{lemma}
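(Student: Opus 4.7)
The plan is to establish the two bounds in succession: first the $H^{1}$ bound $\|\nabla u(\cdot,t)\|_{L^{2}}^{2}\le C$ through an energy estimate obtained by testing the fluid equation against $Au$, and then to lift it to the fractional bound via the variation-of-constants formula for the Stokes semigroup combined with a short bootstrap.

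\textbf{Step 1: Energy estimate for $\|\nabla u\|_{L^{2}}^{2}$.}
Testing the $u$-equation in \eqref{3.2} against $Au$, exploiting $\nabla\cdot u=0$, $u|_{\partial\Omega}=0$ and $\mathcal{P}\nabla P_{\varepsilon}\equiv 0$, we arrive at
$$\frac{1}{2}\frac{d}{dt}\|\nabla u\|_{L^{2}}^{2}+\|Au\|_{L^{2}}^{2}=\int_{\Omega} n\nabla\Phi\cdot Au-\int_{\Omega}(u\cdot\nabla)u\cdot Au.$$
The two-dimensional Gagliardo--Nirenberg inequalities $\|u\|_{L^{4}}^{2}\le C\|u\|_{L^{2}}\|\nabla u\|_{L^{2}}$ and $\|\nabla u\|_{L^{4}}^{2}\le C\|\nabla u\|_{L^{2}}\|Au\|_{L^{2}}$, together with Young's inequality, yield $|\int_{\Omega}(u\cdot\nabla)u\cdot Au|\le\tfrac{1}{4}\|Au\|_{L^{2}}^{2}+C\|u\|_{L^{2}}^{2}\|\nabla u\|_{L^{2}}^{4}$, while the buoyancy term is controlled by $\tfrac{1}{4}\|Au\|_{L^{2}}^{2}+C\|n\|_{L^{2}}^{2}$. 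Setting $y(t):=\|\nabla u(\cdot,t)\|_{L^{2}}^{2}$, this gives
$$y'(t)+\|Au\|_{L^{2}}^{2}\le\bigl(C\|u(t)\|_{L^{2}}^{2}\,y(t)\bigr)\,y(t)+C\|n(t)\|_{L^{2}}^{2}.$$
To close this Riccati-type inequality, apply the Uniform Gronwall Lemma with coefficient $a(t):=C\|u\|_{L^{2}}^{2}y$ and forcing $b(t):=C\|n\|_{L^{2}}^{2}$. By \eqref{3.38}, $\|u(t)\|_{L^{2}}^{2}\le e^{-\kappa(t-t_{0}^{*})}$, and by \eqref{3.42}, $\int_{t}^{t+1}y\,ds\le Ce^{-\kappa(t-t_{0}^{*})}$; multiplying these gives $\int_{t}^{t+1}a\,ds\le Ce^{-2\kappa(t-t_{0}^{*})}$ bounded uniformly in $t>t_{0}^{*}$. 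Combined with the bound $\|n\|_{L^{2}}\le C$ provided by Lemma \ref{lem3.7}, the Uniform Gronwall Lemma delivers $\|\nabla u(\cdot,t)\|_{L^{2}}^{2}\le C$ for all $t>t_{1}^{*}$.

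\textbf{Step 2: Fractional bound via the Stokes semigroup.}
Use the Duhamel representation
$$u(t)=e^{-(t-t_{1}^{*})A}u(t_{1}^{*})+\int_{t_{1}^{*}}^{t}e^{-(t-s)A}\mathcal{P}\bigl[n\nabla\Phi-(u\cdot\nabla)u\bigr](s)\,ds,$$
apply $A^{\beta}$, and invoke Lemma \ref{lemma2.2} to obtain
$$\|A^{\beta}u(t)\|_{L^{2}}\le Ce^{-\mu(t-t_{1}^{*})}\|A^{\beta}u(t_{1}^{*})\|_{L^{2}}+C\int_{t_{1}^{*}}^{t}(t-s)^{-\beta}e^{-\mu(t-s)}\|F(s)\|_{L^{2}}\,ds,$$
with $F:=n\nabla\Phi-(u\cdot\nabla)u$, where the bracket term is $\varepsilon$-uniformly bounded by the $D(A^{\beta})$-regularity of $u_{0}$ from \eqref{1.8} propagated via the smoothing of the Stokes semigroup on the finite interval $(0,t_{1}^{*}]$. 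The nonlinear term is controlled by a two-stage bootstrap. First, Step 1 and $\|\nabla u\|_{L^{4}}^{2}\le C\|\nabla u\|_{L^{2}}\|Au\|_{L^{2}}$ give $\|(u\cdot\nabla)u\|_{L^{2}}\le C\|Au\|_{L^{2}}^{1/2}$; since $\int_{t}^{t+1}\|Au\|_{L^{2}}^{2}\,ds\le C$ by integrating the Step 1 identity, a H\"older estimate applied to the exponentially damped singular kernel (integrable for $\beta+\tfrac14<1$) yields $\|A^{\gamma}u(t)\|_{L^{2}}\le C$ for some $\gamma\in(\tfrac12,\tfrac34)$. The two-dimensional embedding $D(A^{\gamma})\hookrightarrow L^{\infty}(\Omega)$ then upgrades this to $\|u\|_{L^{\infty}}\le C$, so $\|(u\cdot\nabla)u\|_{L^{2}}\le\|u\|_{L^{\infty}}\|\nabla u\|_{L^{2}}\le C$, and a final application of Lemma \ref{lemma2.2} closes the argument for every $\beta\in(\tfrac12,1)$.

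\textbf{Main obstacle.}
The principal difficulty is the super-quadratic term $C\|u\|_{L^{2}}^{2}\|\nabla u\|_{L^{2}}^{4}$ in Step 1, which prevents a direct closure by standard Gronwall. The decisive tool is the exponential decay of $\|u(\cdot,t)\|_{L^{2}}^{2}$ furnished by Lemma \ref{lem3.6}, rendering the effective coefficient in the Uniform Gronwall Lemma small in an integrated sense; combined with the exponentially decaying integral control on $\|\nabla u\|_{L^{2}}^{2}$ from \eqref{3.42}, this yields uniform-in-time boundedness. A secondary technical concern, namely the $\varepsilon$-independence of all constants that is needed for the subsequent compactness arguments, is settled by using exclusively the $\varepsilon$-uniform bounds from Lemmas \ref{lem3.6} and \ref{lem3.7}.
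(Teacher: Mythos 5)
Your proof is correct and follows the same strategy the paper invokes through its citation of Winkler (2020), Lemmas~3.4--3.5: first an $H^1$-bound for $u$ obtained by testing the fluid equation against $Au$ and closing the resulting Riccati-type inequality with the uniform Gronwall lemma (using the exponential $L^2$-decay of $u$ from \eqref{3.38}, the integrated $\|\nabla u\|_{L^2}^2$ bound \eqref{3.42}, and the $L^2$-bound on $n$ from Lemma \ref{lem3.7}), then a lift to $D(A^\beta)$ via the Duhamel representation, the decay estimates for the Stokes semigroup from Lemma \ref{lemma2.2}, and a short bootstrap. The paper itself only cites; your version fills in the details, with the only soft spot being the treatment of $\|A^\beta u(\cdot,t_1^*)\|_{L^2}$ in the initial term of the Duhamel formula, which is more cleanly handled by restarting the representation from time $t-1$ (so that only $\|u(\cdot,t-1)\|_{L^2}$ is needed), at the minor cost of obtaining the bound for $t>t_1^*+1$ rather than $t>t_1^*$.
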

\begin{proof}[Proof] Thanks to \eqref{3.42} and \eqref{3.52}, the estimates claimed in this lemma can be achieved in
line with the reasonings of \cite[Lemmas 3.4--3.5]{Winkler(2020)}.
\end{proof}

The regularity information gained in  \eqref{3.52}, \eqref{3.41} and \eqref{3.63} allows for the establishing  %
of the bounds of  $\nabla w$  in Lebesgue spaces with arbitrarily high integrability powers
through a standard  bootstrap argument.

\begin{lemma}\label{lem3.8b} There is  $C>0$   such that for all $(n_0,v_0,w_0,u_0)$ fulfilling \eqref{1.8} with $\int_\Omega n_0<m_0$,
\begin{equation}\label{3.63b}
\int_\Omega|\nabla w(\cdot,t)|^q\leq C
\end{equation}
for all $t>t_1^*$ which is  as  in  Lemma \ref{lem3.7}.
\end{lemma}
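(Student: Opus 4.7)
The plan is to combine the variation-of-constants (Duhamel) representation of the $w$-equation with the Neumann heat semigroup estimates of Lemma \ref{lemma2.1} and a short bootstrap. Two preliminary ingredients are essential. First, from the bound $\|A^\beta u(\cdot,t)\|_{L^2(\Omega)} \leq C(\beta)$ for $\beta \in (\frac12,1)$ provided by Lemma \ref{lem3.8}, together with the embedding $D(A^\beta) \hookrightarrow L^\infty(\Omega)$ valid in $d=2$ whenever $\beta > \frac12$, we have $\|u(\cdot,t)\|_{L^\infty(\Omega)} \leq C$ uniformly in $t > t_1^*$. Second, Lemma \ref{lem3.7} supplies $\|n(\cdot,t)\|_{L^2(\Omega)} \leq C$ uniformly in $t > t_1^*$. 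Thus the equation $w_t + u\cdot\nabla w = \Delta w - w + n$ carries a bounded drift coefficient and an $L^2$-bounded forcing.

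As a preliminary step, I would establish $\sup_{t > t_1^*} \|w(\cdot,t)\|_{L^r(\Omega)} \leq C(r)$ for every $r \in [2,\infty)$. Testing the equation against $w^{r-1}$, the convection contribution vanishes thanks to $\nabla \cdot u = 0$ and $u|_{\partial\Omega}=0$, leaving
\begin{equation*}
\frac{1}{r}\frac{d}{dt}\int_\Omega w^r + (r-1)\int_\Omega w^{r-2}|\nabla w|^2 + \int_\Omega w^r = \int_\Omega n\,w^{r-1}.
\end{equation*}
The right-hand side is controlled by H\"older's inequality combined with the Gagliardo--Nirenberg inequality in $\mathbb{R}^2$ and the uniform bound on $\|n\|_{L^2(\Omega)}$, yielding after absorption an ODI of the form $\frac{d}{dt}\int_\Omega w^r + c\int_\Omega w^r \leq C$ from which the claim follows.

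For the gradient bound, I would then apply the Duhamel formula on the interval $(t-\tau,t)$ with $\tau := \min\{1, t - t_1^*\}$ and rewrite the convection in divergence form via $u\cdot\nabla w = \nabla\cdot(uw)$:
\begin{equation*}
\nabla w(\cdot,t) = \nabla e^{\tau(\Delta-1)} w(\cdot,t-\tau) + \int_{t-\tau}^t \nabla e^{(t-s)(\Delta-1)} n(\cdot,s)\,ds - \int_{t-\tau}^t \nabla e^{(t-s)(\Delta-1)} \nabla\cdot(uw)(\cdot,s)\,ds.
\end{equation*}
Estimating each piece in $L^q(\Omega)$: Lemma \ref{lemma2.1}(ii) bounds the first term via the $L^2$-norm of $w(\cdot,t-\tau)$, and controls the second with a singularity of order $(t-s)^{-1+1/q}$ that is integrable on $(t-\tau,t)$. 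For the third term, splitting $e^{(t-s)(\Delta-1)} = e^{\frac{t-s}{2}(\Delta-1)} \circ e^{\frac{t-s}{2}(\Delta-1)}$, applying Lemma \ref{lemma2.1}(iv) to the inner factor with $uw \in L^r(\Omega)$ for some fixed $r > q$ (available from the preliminary $L^r$-bound and $\|u\|_{L^\infty} \leq C$), and Lemma \ref{lemma2.1}(ii) to the outer factor produces a combined singularity of order $(t-s)^{-1 + 1/q - 1/r}$, which is integrable precisely because $r > q$.

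The main technical obstacle is the convection term. A direct estimate of $\nabla e^{(t-s)\Delta}(u\cdot\nabla w)$ would lead to a non-integrable kernel of order $(t-s)^{-1}$, borderline in dimension two. Overcoming this requires both the divergence-form rewriting (which transfers one derivative onto the semigroup) and the preliminary $L^r$-bounds on $w$ with $r$ strictly larger than $q$, which together relax the singularity to an integrable exponent and close the bootstrap.
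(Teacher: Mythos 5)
Your overall plan — Duhamel for the $w$-equation plus the semigroup bounds of Lemma \ref{lemma2.1} — is the right shape, and your preliminary $L^r$-bound for $w$ via testing with $w^{r-1}$ is fine. But the treatment of the convection term contains a genuine error. The claimed combined singularity $(t-s)^{-1+\frac1q-\frac1r}$, integrable for $r>q$, is not actually attainable. All of the estimates in Lemma \ref{lemma2.1} are stated for mappings $L^a\to L^b$ with $a\le b$; they give no \emph{improved} time decay when mapping from a stronger space $L^r$ to a weaker one $L^q$ with $r>q$. On a bounded domain you can only exploit the trivial embedding $L^r\hookrightarrow L^q$, which contributes nothing to the kernel. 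Concretely, in the splitting $\nabla e^{(t-s)\Delta}\nabla\cdot(uw)=\nabla e^{\frac{t-s}{2}\Delta}\bigl(e^{\frac{t-s}{2}\Delta}\nabla\cdot(uw)\bigr)$, part (iv) applied from $L^{a}$ to $L^{p}$ requires $a\le p$, and part (ii) applied from $L^{p}$ to $L^q$ requires $p\le q$, forcing $a\le p\le q$. The total exponent is then $-1-\bigl(\frac1a-\frac1p\bigr)-\bigl(\frac1p-\frac1q\bigr)=-1-\frac1a+\frac1q\le -1$, since $\frac1a\ge\frac1q$. You are stuck at the borderline $(t-s)^{-1}$ no matter how large $r$ is; the $L^r$-bound on $uw$ does not relax the singularity.

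The paper takes a different route that avoids this issue entirely: one first obtains a pointwise-in-time bound $\|\nabla w(\cdot,t)\|_{L^2(\Omega)}\le C$ for $t>t_1^*$, using the time-average information $\int_t^{t+1}\int_\Omega|\nabla w|^2\le C_1$ coming from \eqref{3.41} together with an $L^2$-testing (ODI) argument for $\|\nabla w\|_{L^2}^2$. With this in hand, the whole forcing $g:=n-u\cdot\nabla w$ is uniformly bounded in $L^2(\Omega)$ (by \eqref{3.52}, the embedding $D(A^\beta)\hookrightarrow L^\infty(\Omega)$ from Lemma \ref{lem3.8}, and the $\nabla w\in L^2$ bound just established). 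Then one applies Duhamel with a \emph{single} derivative on the semigroup: $\|\nabla e^{(t-s)\Delta}g\|_{L^q}\le c\bigl(1+(t-s)^{-1+\frac1q}\bigr)e^{-\lambda_1(t-s)}\|g\|_{L^2}$, and the exponent $-1+\frac1q>-1$ is integrable. Your divergence-form rewriting was motivated by the (correct) observation that two derivatives on the semigroup are borderline, but the fix is not to raise the Lebesgue exponent of $uw$; it is to keep only one derivative on the semigroup by controlling $u\cdot\nabla w$ directly, which requires the $L^2$-bound on $\nabla w$ as an intermediate step.
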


\begin{proof}[Proof] Re-write the $w$-equation in \eqref{3.2} in the form
 $$
 w_t=\triangle w -w+g $$
  with $g(x,t):=n-u\cdot \nabla w$. In the first step, since \eqref{3.41} in particular provides some $C_1>0$ such that
    $$
  \int^{t+1}_t\int_\Omega|\nabla w(\cdot,t)|^2\leq C_1
  $$
  for all $t\geq t_1^*$, we get  the bound for $\int_\Omega|\nabla w(\cdot,t)|^2$ via
a $L^2$-testing procedure. Thanks to  \eqref{3.52} and \eqref{3.63}, we  obtain the existence of  $C_2$ such that
 $\int_\Omega|g(\cdot,t)|^2\leq C_2$ for all $t\geq t_1^*$, and thereupon \eqref{3.63b} results from the application of
 regularization estimates for the Neumann heat semigroup  $(e^{t\Delta})_{t\geq0}$.
\end{proof}

Inter alia as an other important consequence of  \eqref{3.63}, it allows us to derive a bound for $\int_\Omega|\nabla z(\cdot,t)|^4$.

\begin{lemma}\label{lem3.9}
There exist $C>0$ and  $t_2^*=t_2^*(n_0,v_0,w_0,u_0)>t_1^*$  such that for all $(n_0,v_0,w_0,u_0)$ satisfying (\ref{1.8}) with $\int_\Omega n_0<m_0$,
\begin{equation}\label{3.64}
\int_\Omega|\nabla z(\cdot,t)|^4\leq C \,\mbox{ for all }t\geq t_2^*.
\end{equation}
\end{lemma}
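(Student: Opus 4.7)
My aim is to upgrade the time-integrated bound on $\int_\Omega|\nabla z|^4$ implicit in Lemma \ref{lem3.6} to a pointwise-in-time bound. The strategy proceeds in two stages: first a time-averaged bound via Gagliardo--Nirenberg that furnishes a good initialization, then a differential inequality for $y(t):=\int_\Omega|\nabla z(\cdot,t)|^4$ which closes thanks to the smallness of $\|\nabla z\|_{L^2}$ provided by \eqref{3.36}. Concretely, the Gagliardo--Nirenberg-type inequality \eqref{3.58} (applicable because $z$ has homogeneous Neumann boundary data) yields $\int_\Omega|\nabla z|^4\leq C\|\Delta z\|_{L^2}^2\|\nabla z\|_{L^2}^2$, and combining with \eqref{3.36} and \eqref{3.40} gives that $\int_t^{t+1}\int_\Omega|\nabla z|^4\,ds$ is bounded (in fact exponentially decaying) for $t\geq t_0^*$. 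In particular, for any candidate $t_2^*>t_1^*+1$ one can select some $\tau\in(t_2^*-1,t_2^*)$ for which $\int_\Omega|\nabla z(\cdot,\tau)|^4\leq C$; this instant will serve as the starting point for the subsequent ODI.

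For the ODI, I would compute $\tfrac{d}{dt}\int_\Omega|\nabla z|^4=4\int_\Omega|\nabla z|^2\nabla z\cdot\nabla z_t$ and substitute $\nabla z_t$ from the second equation of \eqref{3.2}. Integrating by parts and using $\partial_\nu z=0$, $\nabla\cdot u=0$ and $u|_{\partial\Omega}=0$ to dispose of boundary and transport terms produces dissipative contributions of the form $\int_\Omega|\nabla z|^2|\Delta z|^2$ and $\int_\Omega|\nabla z|^2|D^2 z|^2$, together with residual nonlinear terms of the three types $\int_\Omega|\nabla z|^4|\Delta z|$, $\int_\Omega|\nabla z|^3|\nabla w|$ and $\int_\Omega|\nabla z|^3|\nabla u|$. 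The $w$-term is handled by H\"older using the $L^q$-bound on $\nabla w$ from Lemma \ref{lem3.8b}, while the $u$-term is dispatched similarly using the $L^q$-bound on $\nabla u$ that follows from Lemma \ref{lem3.8} and Sobolev embedding in 2D.

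The main obstacle lies in the first residual $\int_\Omega|\nabla z|^4|\Delta z|$: splitting via Young as $\tfrac{1}{2}\int_\Omega|\nabla z|^2|\Delta z|^2+\tfrac{1}{2}\int_\Omega|\nabla z|^6$, the former absorbs into the dissipation, while the latter is estimated by a Gagliardo--Nirenberg inequality applied to $|\nabla z|^2$ to give $\int_\Omega|\nabla z|^6\leq C\|\nabla|\nabla z|^2\|_{L^2}^2\|\nabla z\|_{L^2}^2+\text{l.o.t.}$, and the prefactor $\|\nabla z\|_{L^2}^2$ can be made arbitrarily small by enlarging $t_2^*$ thanks to \eqref{3.36}; the resulting contribution then absorbs into $\int_\Omega|D^2 z|^2|\nabla z|^2$. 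This produces an inequality of the form $y'(t)+cy(t)\leq g(t)$ on $[\tau,\infty)$ with $g$ uniformly bounded, which, combined with the initial bound $y(\tau)\leq C$, delivers the assertion by a standard Gronwall comparison, provided $t_2^*$ is chosen so large that the smallness threshold required for the absorption step holds throughout $[t_2^*,\infty)$.
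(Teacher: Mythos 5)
Your strategy mirrors the paper's proof: a Gagliardo--Nirenberg bound $\int_\Omega|\nabla z|^4\lesssim\|\Delta z\|_{L^2}^2\|\nabla z\|_{L^2}^2$ combined with \eqref{3.36} and \eqref{3.40} supplies a time-averaged bound and hence a good initial time, then an ODI for $y(t)=\int_\Omega|\nabla z|^4$ is closed by absorbing the $\int_\Omega|\nabla z|^6$ term via Gagliardo--Nirenberg and the exponential smallness of $\|\nabla z\|_{L^2}^2$ after enlarging $t_2^*$, with the $\nabla w$ and $\nabla u$ source terms controlled through Lemmas \ref{lem3.8b} and \ref{lem3.8}. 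This is precisely the paper's route; the only slips are cosmetic (the velocity term should read $\int_\Omega|\nabla z|^4|\nabla u|$ rather than $|\nabla z|^3|\nabla u|$, and in fact the $L^2$-bound on $\nabla u$ already suffices there, no higher integrability needed).
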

\begin{proof}[Proof]
Testing the second equation in (\ref{3.2}) against $\nabla z\cdot|\nabla z|^2$ and  using the identity $\nabla z\cdot\nabla\Delta z=\frac1 2\Delta|\nabla z|^2-|D^2z|^2$, we obtain
\begin{align}\label{3.65}
&\frac d{dt}\int_\Omega|\nabla z|^4
+2\int_\Omega|\nabla|\nabla z|^2|^2\notag\\
\leq & 12\int_\Omega|\nabla z|^6
+36\int_\Omega w^2|\nabla z|^2
+4\int_\Omega|\nabla z|^4|\nabla u|
+C_1(\int_\Omega|\nabla z|^2)^2.
\end{align}
 By  the Gagliardo-Nirenberg inequality and the Young inequality, we can see that
\begin{equation}\label{3.66}
\begin{split}
\frac d{dt}&\int_\Omega|\nabla z|^4
+2\int_\Omega|\nabla|\nabla z|^2|^2
+\int_\Omega|\nabla z|^4\\
&\leq 12\int_\Omega|\nabla z|^6
+36\int_\Omega w^2|\nabla z|^2
+4\int_\Omega|\nabla z|^4|\nabla u|
+C_1(\int_\Omega|\nabla z|^2)^2
+\int_\Omega|\nabla z|^4\\
&\leq 12\int_\Omega|\nabla z|^6
+36\int_\Omega w^2|\nabla z|^2
+4\int_\Omega|\nabla z|^4|\nabla u|+\frac 12\int_\Omega|\nabla|\nabla z|^2|^2+C_2
\end{split}
\end{equation}
with some $C_2>0$ for all $t>t_1^*$ as provided by  Lemma \ref{lem3.7}.

Here in the first integral on the right side of \eqref{3.65}, we invoke  the Gagliardo-Nirenberg inequality to find $C_{GN}>0$ such that
\begin{equation}
\int_\Omega|\nabla z|^6
\leq C_{GN}\big(\int_\Omega|\nabla|\nabla z|^2|^2\big)
(\int_\Omega|\nabla z|^2)
+C_{GN}(\int_\Omega|\nabla z|^2)^3.
\end{equation}
In addition, thanks to \eqref{3.63} and \eqref{3.53}, the application of Gagliardo-Nirenberg's inequality and the Young inequality
gives us
\begin{equation}
\begin{split}
4\int_\Omega|\nabla z|^4|\nabla u|
&\leq 4 \||\nabla z|^2\|_{L^4}^2 \|\nabla u\|_{L^2}\\
&\leq C_3C_{GN}\big(\|\nabla|\nabla z|^2\|_{L^2}^\frac 32
\||\nabla z|^2\|_{L^1}^{\frac1 2}
+\||\nabla z|^2\|_{L^1}^2\big)\\
&\leq \frac12 \|\nabla|\nabla z|^2\|_{L^2}^2+C_3.\\
\end{split}
\end{equation}
By the Young inequality, we estimate
\begin{equation}\label{3.70}
\begin{split}
36\int_\Omega w^2|\nabla z|^2
&\leq\int_\Omega|\nabla z|^6+216\int_\Omega w^3\\
&\leq\int_\Omega|\nabla z|^6+216C_{GN}
(\|\nabla w\|_{L^2}^2\|w\|_{L^1}+\|w\|_{L^1}^3)\\
&\leq\int_\Omega|\nabla z|^6+
C_4\|\nabla w\|_{L^2}^2+C_4
\end{split}
\end{equation}
with $C_4:=\max\{216C_{GN}(\int_\Omega w_0+m),216(\int_\Omega w_0+m)^3\}$.

The combination of the above inequality with (\ref{3.66}) then yields
\begin{equation}
\begin{split}\label{3.71}
\frac d{dt}&\int_\Omega|\nabla z|^4
+(1-14C_{GN}(\int_\Omega|\nabla z|^2))\int_\Omega|\nabla|\nabla z|^2|^2
+\int_\Omega|\nabla z|^4\\
&\leq C_4\int_\Omega |\nabla w|^2
+C_5\\
\end{split}
\end{equation}
for some $C_5>0$ and $t>t_1^*$.

Thanks to the decay property of $\int_\Omega|\nabla z|^2(\cdot,t)$ in (\ref{3.36}),
we fix  $t_2^*:=t_1^*+(\frac{\ln(28C_{GN})}{\kappa})_+$ such that
\begin{equation}
14C_{GN}\int_\Omega|\nabla z|^2(\cdot,t)\leq \frac12 \mbox{ for all }t\geq t_2^*,
\end{equation}
and hence
\begin{equation}\label{3.72}
\begin{split}
\frac d{dt}\int_\Omega|\nabla z|^4
+\frac12\int_\Omega|\nabla|\nabla z|^2|^2
+\int_\Omega|\nabla z|^4
\leq
C_4\int_\Omega |\nabla w|^2+C_5.
 \end{split}
\end{equation}
Furthermore, in light of   \eqref{3.40} and \eqref{3.36}, an application of \eqref{3.58} leads to
 $$
 \int^{t+1}_t\int_\Omega |\nabla z|^4\leq C_6
  $$
  with some $C_6>0$ for any $t\geq t_0^*$, which together with \eqref{3.63b} yields
\begin{equation}\label{3.73}
\int_\Omega|\nabla z(\cdot,t)|^4\leq C_7
\end{equation}
 for all  $t\geq t_2^*$ and thus completes the proof.
\end{proof}

Thanks to the estimates  of $\int_{\Omega}|\nabla z|^{4}$  and $\int_\Omega|A^\beta u|^2$ ($\beta\in (\frac 12,1)$) established in Lemmas \ref{lem3.8} and \ref{lem3.9}, respectively, we can now derive the boundedness  of $(n,u)$ in  $L^{\infty}(\Omega)\times L^{\infty}(\Omega)$ and $(\nabla z, \nabla w)$ in $L^{q}(\Omega)\times L^{\infty}(\Omega)$ respectively by  the standard smoothing properties of the Neumann heat semigroup.

\begin{lemma}\label{lem3.10}
Let $m_0$ and $t_2^*$ be as in Lemma \ref{lem3.6} and Lemma \ref{lem3.9} respectively. There is $C>0$ such that whenever (\ref{1.8}) is valid with $\int_\Omega n_0<m_0$, we have
\begin{equation}\label{3.74}
\|n(\cdot,t)\|_{L^\infty(\Omega)}\leq C,
\end{equation}
\begin{equation}\label{3.75}
\|v(\cdot,t)\|_{W^{1,\infty}(\Omega)}\leq C,
\end{equation}
\begin{equation}\label{3.76}
\|w(\cdot,t)\|_{W^{1,\infty}(\Omega)}\leq C
\end{equation}
\begin{equation}\label{3.77}
\|u(\cdot,t)\|_{L^\infty(\Omega)}\leq C
\end{equation}
 for all $t>t_2$.
\end{lemma}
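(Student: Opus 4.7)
The plan is to obtain \eqref{3.74}--\eqref{3.77} through a sequence of semigroup-based bootstrap arguments, starting from the $L^\infty$ bound on $u$ and the $L^2$-type bounds for $n$, $\nabla z$, and $\nabla w$ that have been secured on $(t_2^*,\infty)$.

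\textbf{Step 1: $L^\infty$-bound for $u$.} In two spatial dimensions, the embedding $D(A^\beta)\hookrightarrow L^\infty(\Omega)$ holds for $\beta>\tfrac12$. Fixing any such $\beta$ and invoking the estimate $\int_\Omega|A^\beta u(\cdot,t)|^2\leq C(\beta)$ from Lemma \ref{lem3.8} immediately produces \eqref{3.77}.

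\textbf{Step 2: $L^\infty$-bound for $n$ via $L^p$-bootstrap.} Rewriting the first equation of \eqref{3.2} as $n_t=\Delta n+\nabla\cdot F$ with $F:=nvS(\cdot,n,v)\cdot\nabla z-un$, I would apply the variation-of-constants formula
\begin{equation*}
n(\cdot,t)=e^{(t-t_0)\Delta}n(\cdot,t_0)+\int_{t_0}^{t}e^{(t-s)\Delta}\nabla\cdot F(\cdot,s)\,ds
\end{equation*}
for $t_0\in[t_2^*,t-1]$, combined with Lemma \ref{lemma2.1}(iv). Since $|vS|\leq S_0(K)$ by \eqref{1.7b}, Hölder's inequality together with $n\in L^2$, $\nabla z\in L^4$ and $u\in L^\infty$ gives $\|F(\cdot,s)\|_{L^{4/3}}\leq C$ uniformly in $s\geq t_2^*$. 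The singular factor $(t-s)^{-1/2-(1/(4/3)-1/p)}$ is integrable for $p<4$, which upgrades $n$ to $L^p$ for any such $p$. Iterating — each step inserts the new $L^p$-bound on $n$ into the Hölder estimate for $F$ and enlarges the range of admissible exponents — yields $n\in L^q$ for arbitrarily large $q<\infty$, after which a final application of Lemma \ref{lemma2.1}(iv) with $q>2$ and $p=\infty$ delivers \eqref{3.74}.

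\textbf{Step 3: $W^{1,\infty}$-bounds for $w$ and $v$.} With $n,u\in L^\infty$ in hand, the third equation of \eqref{3.2} reads $w_t=\Delta w-w+n-u\cdot\nabla w$, whose source term is already in every $L^p$ thanks to Lemma \ref{lem3.8b}. Applying the variation-of-constants formula with the semigroup $e^{t(\Delta-1)}$ and Lemma \ref{lemma2.1}(ii) with $q$ large enough produces an $L^\infty$-bound for $\nabla w$, hence \eqref{3.76}. For $v$, Lemma \ref{lemma2.6} already gives $\|v\|_{L^\infty}\leq\|v_0\|_{L^\infty}$. Writing $v_t=\Delta v-vw-u\cdot\nabla v$ and using that $vw,\,u\in L^\infty$ on $(t_2^*,\infty)$, an analogous semigroup argument — together with a short-time use of $v_0\in W^{1,\infty}$ transported to the initial slice $t=t_2^*$ via standard parabolic regularity — yields $\|\nabla v(\cdot,t)\|_{L^\infty}\leq C$, establishing \eqref{3.75}.

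\textbf{Main obstacle.} The core difficulty lies in Step 2: the cross-diffusive flux $nvS\cdot\nabla z$ is only controlled through the pair $(\|n\|_{L^2},\|\nabla z\|_{L^4})$, so a one-shot jump to $L^\infty$ is unavailable. The finite iterative bootstrap therefore has to be carried out carefully, with each step verifying that the relevant exponent $-\tfrac12-(\tfrac 1q-\tfrac 1p)$ produced by Lemma \ref{lemma2.1}(iv) remains strictly larger than $-1$, and with $\varepsilon$-independence of all constants — which is automatic because every bound entering the iteration originates from the $\varepsilon$-uniform estimates of Lemmas \ref{lem3.6}--\ref{lem3.9}.
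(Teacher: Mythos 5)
Your argument mirrors the paper's proof: step 1 is identical (embedding $D(A^\beta)\hookrightarrow L^\infty$ together with Lemma \ref{lem3.8}), step 2 spells out the semigroup $L^p$-bootstrap for $n$ that the paper delegates to the cited lemmas of Winkler and Lankeit--Viglialoro starting from precisely the same inputs \eqref{3.52} and \eqref{3.64}, and step 3 recovers $\nabla w$ and $\nabla v$ by the same Neumann heat-semigroup smoothing the paper invokes. The reasoning is sound and essentially the same as the paper's.
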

\begin{proof}[Proof] Due to the continuity of the embedding $D(A^\beta)\hookrightarrow L^\infty(\Omega)$ with $\beta>\frac 12$,
with the help of Lemma \ref{lem3.8} we can find $C>0$ fulfilling $\|u(\cdot,t)\|_{L^\infty(\Omega)}\leq C$ for all $t>t_2^*$.
On the basis of \eqref{3.52} and \eqref{3.64},
 one can adapt an approach illustrated in e.g.~\cite[Lemma 4.4] {Winkler(2016)} or \cite[Lemma 5.1]{Lankeit(2020)} to obtain
\eqref{3.74}. Thereafter invoking  the standard smoothing properties of the Neumann heat semigroup
(see e.g. Lemma \ref{lemma2.1}), we can obtain
  \eqref{3.75} and \eqref{3.76} readily.
\end{proof}

Now we are ready to  prove the first part of Theorem 1.1.
To establish the global boundedness of classical solutions to \eqref{1.6},
  our approach is to make use of the a priori bounds of
 the solution $(n_\varepsilon, v_\varepsilon, w_\varepsilon,  u_\varepsilon, P_\varepsilon) $ to the regularized problem
  \eqref{2.4} on $(0,t_2^*)$, where $t_2^*>0$  is given in Lemma \ref{lem3.9}. Here it should be underlined that these bounds are independent of the regularization index $\varepsilon$.
\begin{lemma}\label{lemma3.11} Let the  assumptions of Theorem 1.1 hold. Then the problem \eqref{1.6} possesses
a global classical solution which is bounded and enjoys decay properties stated in Lemma \ref{lem3.6}.
\end{lemma}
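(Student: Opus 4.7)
The plan is to deduce Lemma \ref{lemma3.11} by passing to the limit $\varepsilon \searrow 0$ in the regularized problem \eqref{2.4}, exploiting the $\varepsilon$-independence of the time $t_2^*$ from Lemma \ref{lem3.9} and of all constants occurring in Lemmas \ref{lem3.6}--\ref{lem3.10}. Since $t_0^* = t_2$ is chosen in the proof of Lemma \ref{lem3.6} independently of $\varepsilon$, so are $t_1^* = t_0^* + 1$ and $t_2^*$, and hence the $L^\infty$-type bounds of Lemma \ref{lem3.10} hold on $[t_2^*, \infty)$ with constants that do not depend on the regularization.

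The first step is to supplement these large-time bounds with $\varepsilon$-uniform bounds on the finite slab $\overline\Omega \times [0, t_2^*]$. Because \eqref{1.7b} together with \eqref{2.6} guarantees $|v_\varepsilon S_\varepsilon(\cdot, n_\varepsilon, v_\varepsilon)| \leq S_0(K)$ uniformly in $\varepsilon$, the $n_\varepsilon$-equation in \eqref{3.2} behaves as a non-singular chemotaxis equation with bounded drift coefficient $v_\varepsilon S_\varepsilon$. A standard bootstrap built on Lemma \ref{lemma2.1} for the Neumann heat semigroup, Lemma \ref{lemma2.2} for the Stokes semigroup, and the Poincar\'e- and Gagliardo--Nirenberg-type inequalities used throughout Section~3 then yields $\varepsilon$-uniform $L^\infty$-bounds for $n_\varepsilon$, $\nabla v_\varepsilon$, $\nabla w_\varepsilon$ and $u_\varepsilon$ on $[0, t_2^*]$; combined with Lemma \ref{lem3.10}, this produces global-in-time uniform boundedness on $[0, \infty)$.

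The main technical step is to upgrade these pointwise bounds to uniform H\"older regularity on every compact subset of $\overline\Omega \times [0, \infty)$. Here I would apply classical parabolic H\"older theory to the divergence-form equations for $n_\varepsilon$, $v_\varepsilon$, $w_\varepsilon$ and Sobolev--Schauder estimates to the Stokes system for $u_\varepsilon$, so as to obtain uniform $C^{\alpha, \alpha/2}$-bounds and, after an additional bootstrap, uniform $C^{2+\alpha, 1+\alpha/2}$-bounds on $\overline\Omega \times [\tau, T]$ for $0 < \tau < T$. This is the delicate part of the argument, since the cross-diffusive flux $n_\varepsilon v_\varepsilon S_\varepsilon \cdot \nabla z_\varepsilon$ must have uniformly H\"older-continuous coefficients; the saving point is that the positivity of $v_0$ combined with the upper bound on $w_\varepsilon$ and the maximum principle applied to the $v_\varepsilon$-equation of \eqref{2.4} keeps $v_\varepsilon$ uniformly bounded away from zero on every compact subset, so that $\nabla z_\varepsilon = -\nabla v_\varepsilon / v_\varepsilon$ remains under uniform control.

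With the compactness bounds in hand, Arzel\`a--Ascoli delivers a sequence $\varepsilon_j \searrow 0$ along which $(n_{\varepsilon_j}, v_{\varepsilon_j}, w_{\varepsilon_j}, u_{\varepsilon_j})$ converges in $C^{2,1}_{\mathrm{loc}}(\overline\Omega \times (0,\infty))$ to a limit $(n, v, w, u)$, and the corresponding pressures $P_{\varepsilon_j}$ converge in an appropriate distributional sense after normalization. Since $\rho_{\varepsilon_j} \to 1$ locally uniformly, $(n, v, w, u, P)$ solves \eqref{1.6} classically in $\Omega \times (0, \infty)$, while attainment of the initial data follows from the uniform H\"older regularity down to $t = 0$. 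Finally, because every constant and every decay rate appearing in Lemma \ref{lem3.6} is $\varepsilon$-independent, the pointwise convergence transfers the exponential estimates \eqref{3.35}--\eqref{3.42} directly to $(n, v, w, u)$, which together with the global $L^\infty$-bounds established above yields both the boundedness and the decay asserted in Lemma \ref{lemma3.11}.
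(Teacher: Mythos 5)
Your overall strategy coincides with the paper's: exploit the $\varepsilon$-independence of $t_2^*$ and of the large-time bounds from Lemmas \ref{lem3.6}--\ref{lem3.10}, supplement these with $\varepsilon$-uniform bounds on the finite slab $[0,t_2^*]$, upgrade to H\"older/Schauder regularity, and pass to the limit by Arzel\`a--Ascoli, transferring the decay by lower semi-continuity. However, there is a circularity in the way you obtain the $\varepsilon$-uniform bounds on $[0,t_2^*]$, and this is precisely the point the paper is careful about.

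You argue that $v_\varepsilon$ stays uniformly bounded away from zero on compacts via the maximum principle applied to the $v_\varepsilon$-equation, using ``the upper bound on $w_\varepsilon$.'' But at the stage where this lower bound on $v_\varepsilon$ is first needed --- namely to control $\nabla z_\varepsilon = -\nabla v_\varepsilon/v_\varepsilon$ in the chemotactic flux of the $n_\varepsilon$-equation, which drives the $L^\infty$-bootstrap you invoke --- no $\varepsilon$-uniform $L^\infty$-bound on $w_\varepsilon$ is yet available; Lemma \ref{lemma3.4} supplies only an $L^2$-bound. Passing from $\|n_\varepsilon\|_{L^1}$ to $\|w_\varepsilon\|_{L^\infty}$ via the $w_\varepsilon$-equation and the Neumann heat semigroup already fails for mere $L^1$-data because the resulting singularity $(t-s)^{-1}$ is non-integrable, so one needs some $L^p$-bound on $n_\varepsilon$ with $p>1$ --- which in turn requires control of $\nabla z_\varepsilon$. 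The paper breaks this circle differently: from the $\varepsilon$-independent $\|w_\varepsilon\|_{L^2}\le C_1(t_2^*)$ of Lemma \ref{lemma3.4}, a Moser--Alikakos iteration applied directly to the $z_\varepsilon$-equation $z_{\varepsilon t}+u_\varepsilon\cdot\nabla z_\varepsilon=\Delta z_\varepsilon-|\nabla z_\varepsilon|^2+w_\varepsilon$ (where the $-|\nabla z_\varepsilon|^2$ term has a helpful sign and only $L^2$-control of $w_\varepsilon$ is required) yields an $\varepsilon$-uniform $L^\infty$-bound for $z_\varepsilon$ on $(0,t_2^*]$, equivalently a lower bound for $v_\varepsilon$; this is the cited Lemma 4.5 of Dai--Liu and is the indispensable starting point before the bootstrap \eqref{3.78} can proceed. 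Your proof proposal should replace the maximum-principle claim with this Moser--Alikakos step (or supply an alternative argument that reaches a uniform lower bound for $v_\varepsilon$ using only the $\varepsilon$-independent quantities $\|w_\varepsilon\|_{L^2}$ and $\|n_\varepsilon\|_{L^1}$); otherwise the chain of estimates does not close.
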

\begin{proof}[Proof] From Lemma \ref{lemma3.4}, there exists $C_1=C_1(t_2^*)$ such that
$
\|w_\varepsilon(\cdot,t)\|_{L^2(\Omega)}\leq C_1
$ for all $t\leq t_2^*$. Thereafter proceeding along with a well-established Moser-Alikakos iterative technique, one can
derive the $t_2^*$-dependent $L^\infty$-bound of $z$ (\cite[Lemma 4.5]{Dai Liu(2023)}), which rules out the singularity of $S$ at $v=0$ on
the interval  $(0,t_2^*)$.
Furthermore, in line with the reasonings of \cite[Lemma 3.3--3.5]{Winkler(2020)} or \cite[Proposition 4.1]{Dai Liu(2023)} we are able to establish
 \begin{equation}\label{3.78}
\|n_\varepsilon(\cdot,t)\|_{L^\infty(\Omega)}+
\|v_\varepsilon(\cdot,t)\|_{W^{1,\infty}(\Omega)}+
\|w_\varepsilon(\cdot,t)\|_{W^{1,\infty}(\Omega)}+\|A^{\beta}u_\varepsilon(\cdot,t)\|_{L^\infty(\Omega)}\leq C_2:=C(t_2^*)
\end{equation}
for all $\beta\in (\frac 12,1)$ and $t\leq t_2^*$, which together with Lemma \ref{lem3.10} implies that \eqref{3.74}--\eqref{3.77}
is valid for all $t>0$.  Now the corresponding  H\"{o}lder regularity of $(n_\varepsilon, v_\varepsilon, w_\varepsilon,  u_\varepsilon)$ can be realized
based on a combination of the parabolic Schauder theory and bootstrap argument (see \cite[Lemma 5.7]{CaoLankeit}). Furthermore, invoking the  Arzel\`{a}-Ascoli theorem,
there exists a subsequence $(\varepsilon_j)_{j\in N} \in(0, 1)$ such that $\varepsilon_j \rightarrow 0$ as $j\rightarrow \infty$ and
upon $\varepsilon_j\searrow 0$, $(n_\varepsilon, v_\varepsilon, w_\varepsilon,u_\varepsilon)$ converges to
 $(n, v,w,u)$ in $C_{loc}^{2+\gamma, 1+\frac \gamma 2}(\overline \Omega \times (0,\infty))$, resulting in a classical solution to the problem \eqref{1.6}
 (\cite[Lemma 5.7]{CaoLankeit}).
Finally one can achieve the decay properties stated in Lemma \ref{lem3.6} for $(n,v,w,u)$ by the lower semi-continuity of the respective
norms (see \cite[Lemma 5.6]{CaoLankeit}).\end{proof}

\section{Asymptotic Behavior}
In this section, we prove large time convergence for each component of the solution to \eqref{1.6} rather
the approximated system \eqref{2.4}.  To this end, we shall turn the information in \eqref{3.37} into
 the decay property of $\|\nabla z(\cdot,t)\|_{L^q(\Omega)}$ for any finite $q$
 by employing the smoothing properties of the Neumann heat semigroup.
\begin{lemma}\label{lemma4.1}
Let $m_0$ and $t_2^*$ be as in Lemma \ref{lem3.6} and Lemma \ref{lem3.9}, respectively. Then for any $4<q<\infty$, there exist $\kappa=\kappa(q)>0$ and $C=C(q)>0$ such that whenever (\ref{1.8}) is valid with $\int_\Omega n_0<m_0$, we have
\begin{equation}\label{4.1}
\|\nabla z(\cdot,t)\|_{L^q(\Omega)}\leq Ce^{-\kappa t}\mbox{ for all }t>t_2^*+1.
\end{equation}
\end{lemma}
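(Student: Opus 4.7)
The plan is to treat the second equation of \eqref{3.2} as a linear Neumann heat equation with forcing $w - u\cdot\nabla z - |\nabla z|^2$ and to insert the decay information for the individual solution components already gathered in Lemmas \ref{lem3.6}--\ref{lem3.10} into the Duhamel representation, relying on the smoothing estimates of Lemma \ref{lemma2.1}. Fix $q \in (4,\infty)$, let $t > t_2^* + 1$, and set $t_0 := t_2^*$. Starting from
\[
\nabla z(\cdot,t) = \nabla e^{(t-t_0)\Delta} z(\cdot,t_0) + \int_{t_0}^{t} \nabla e^{(t-s)\Delta}\bigl[w - u\cdot\nabla z - |\nabla z|^2\bigr](\cdot,s)\,ds,
\]
I would estimate the initial-data contribution through Lemma \ref{lemma2.1}(iii), bound each forcing piece in an appropriately chosen $L^{p_0}(\Omega)$ space via Lemma \ref{lemma2.1}(ii), and combine the resulting temporal integrals through Lemma \ref{lemma2.5}.

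For the initial-data term, Lemma \ref{lemma2.1}(iii) with source exponent $4$ together with the uniform bound $\|\nabla z(\cdot,t_0)\|_{L^4(\Omega)} \leq C$ from Lemma \ref{lem3.9} gives control of order $(1+(t-t_0)^{-(\frac14-\frac1q)})e^{-\lambda_1(t-t_0)}$. For the $w$-forcing, I would write $w = (w-\overline{w}) + \overline{w}$; since $\nabla e^{\tau\Delta}$ annihilates the spatial constant $\overline{w}(s)$ under Neumann boundary conditions, only $w-\overline{w}$ survives, and Lemma \ref{lemma2.1}(ii) with source exponent $2$ bounds $\|\nabla e^{(t-s)\Delta}(w-\overline{w})\|_{L^q}$ by a factor of order $(1+(t-s)^{-1+1/q})e^{-\lambda_1(t-s)}$ times $\|w-\overline{w}\|_{L^2}\leq e^{-\kappa s/2}$ (from Lemma \ref{lem3.6}). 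The drift piece $u\cdot\nabla z$ is handled analogously with source exponent $2$, using $\|u\|_{L^\infty}\leq C$ from Lemma \ref{lem3.10} and the exponential decay of $\|\nabla z\|_{L^2}$.

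The main obstacle lies in the nonlinear term $|\nabla z|^2$, since the $L^4$-control provided by Lemma \ref{lem3.9} is merely a uniform bound rather than a decay statement, and the seemingly natural choice $p_0=1$ would generate a kernel of order $(t-s)^{-3/2+1/q}$ which fails to be locally integrable for any $q>4$. The key trade-off is to pick $p_0\in(1,2)$ strictly smaller than $2q/(q+2)$: the latter inequality ensures $1/2+(1/p_0-1/q)<1$, so that the singular factor supplied by Lemma \ref{lemma2.1}(ii) is integrable, while $p_0<2$ forces $2p_0<4$, allowing interpolation between $\|\nabla z\|_{L^2}\leq e^{-\kappa s/2}$ and $\|\nabla z\|_{L^4}\leq C$ to yield $\|\nabla z(\cdot,s)\|_{L^{2p_0}}\leq Ce^{-\sigma s}$ with some $\sigma=\sigma(p_0)>0$, and hence $\||\nabla z|^2\|_{L^{p_0}}\leq Ce^{-2\sigma s}$. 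With this choice, Lemma \ref{lemma2.5} converts each of the four contributions into an exponentially decaying bound, giving the claim with some $\kappa=\kappa(q)>0$ that deteriorates as $q\to\infty$ because $\sigma(p_0)\to 0$ as $p_0\to 2^-$.
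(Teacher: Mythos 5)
Your strategy is genuinely different from the paper's, but the condition you impose on $p_0$ is backwards, and as written your kernel is not locally integrable. With $n=2$, Lemma~\ref{lemma2.1}(ii) gives the factor $(1+(t-s)^{-\frac12-(\frac1{p_0}-\frac1q)})e^{-\lambda_1(t-s)}$, so integrability near $s=t$ requires
$\frac12+\frac1{p_0}-\frac1q<1$, i.e.\ $\frac1{p_0}<\frac12+\frac1q$, i.e.\ $p_0>\frac{2q}{q+2}$,
which is the opposite of the condition ``$p_0<\frac{2q}{q+2}$'' that you state. With $p_0<\frac{2q}{q+2}$ the exponent satisfies $\frac12+\frac1{p_0}-\frac1q>1$ and the temporal integral diverges. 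The fix is to take $p_0\in\bigl(\frac{2q}{q+2},2\bigr)$, which is nonempty for every $q>4$; the rest of your argument (interpolation of $\|\nabla z\|_{L^{2p_0}}$ between the decaying $L^2$-norm and the bounded $L^4$-norm, then Lemma~\ref{lemma2.5}) goes through, and your observation that the decay rate degenerates as $q\to\infty$ is in fact a consequence of the corrected constraint, since $\frac{2q}{q+2}\to 2^-$ forces $p_0\to 2^-$ and hence the interpolation weight on $L^2$ to vanish.

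For comparison, the paper avoids this tuning altogether by a two-step argument: it first uses the Duhamel formula with the \emph{uniform} bounds $\|\nabla z\|_{L^4}\le C$, $\|w-\overline w\|_{L^2}\le C$, $\|u\|_{L^\infty}\le C$ (all source exponents fixed, all kernels integrable for every $q<\infty$) to conclude $\|\nabla z(\cdot,t)\|_{L^q}\le C(q)$ for each finite $q$, and only afterwards interpolates $\|\nabla z\|_{L^q}\le\|\nabla z\|_{L^{2q}}^{(q-2)/(q-1)}\|\nabla z\|_{L^2}^{1/(q-1)}$, injecting the exponential $L^2$-decay from Lemma~\ref{lem3.6}. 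This cleanly separates ``boundedness in high $L^q$'' from ``decay transferred through interpolation'' and requires no case-by-case choice of an auxiliary exponent. Both routes rely on exactly the same inputs; the paper's is the more robust bookkeeping, while yours (once the inequality is reversed) makes the trade-off between kernel integrability and interpolation weight explicit in a single pass.
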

\begin{proof}[Proof]
Applying the variation-of-constants formula to the second equation of (\ref{3.2}), we have
\begin{equation*}
\begin{split}
z(\cdot,t)
&=e^{(t-t_2)\Delta}z(\cdot,t_2)
-\int_{t_2}^te^{(t-s)\Delta}|\nabla z|^2(\cdot,s)ds\\
&\quad+\int_{t_1}^te^{(t-s)\Delta}w(\cdot,s)ds
-\int_{t_2}^t e^{(t-s)\Delta}(u \cdot\nabla z)(\cdot,s)ds
\end{split}
\end{equation*}
for $t\geq t^*_2$,
which along with the application of Lemma \ref{lemma2.1} leads to
\begin{equation}\label{4.2}
\begin{split}
&\|\nabla z(\cdot,t)\|_{L^q(\Omega)}\\
&\leq C_1e^{-\lambda_1(t-t_2^*)}
\|\nabla z(\cdot,t_2^*)\|_{L^4(\Omega)}
+\int_{t_2^*}^t
\|\nabla e^{(t-s)\Delta}|\nabla z|^2(\cdot,s)\|_{L^q(\Omega)}ds\\
&\quad+\int_{t_2^*}^t\|\nabla e^{(t-s)\Delta}
\big(w(\cdot,s)-\overline{w}(s)\big)\|_{L^q(\Omega)}ds
+\int_{t_2^*}^t\|\nabla e^{(t-s)\Delta}
(u\cdot\nabla z)(\cdot,s)\|_{L^q(\Omega)}ds\\
&\leq C_1e^{-\lambda_1(t-t_2^*)}
\|\nabla z(\cdot,t_2^*)\|_{L^4(\Omega)}
+C_1\int_{t_2^*}^t(1+(t-s)^{-1+\frac1 q})
e^{-\lambda_1(t-s)}\|\nabla z(\cdot,s)\|_{L^4(\Omega)}^2ds\\
&\quad+C_1\int_{t_2^*}^t(1+(t-s)^{-1+\frac1 q})
e^{-\lambda_1(t-s)}
\|w(\cdot,s)-\overline{w}(\cdot,s)\|_{L^2(\Omega)}ds\\
&\quad+C_1\int_{t_2^*}^t(1+(t-s)^{-\frac3 4+\frac1 q})
e^{-\lambda_1(t-s)}\|u(\cdot,t)\|_{L^\infty(\Omega)}
\|\nabla z(\cdot,s)\|_{L^4(\Omega)}ds\\
&\leq C_1\|\nabla z(\cdot,t_2^*)\|_{L^4(\Omega)}e^{-\lambda_1(t-t_2^*)}
+C_1\int_0^\infty(1+\sigma^{-1+\frac1 q})
e^{-\lambda_1\sigma}d\sigma
\cdot\sup\limits_{t\geq t_2^*}
\|\nabla z(\cdot,t)\|_{L^4(\Omega)}^2\\
&\quad+C_1\int_0^\infty(1+\sigma^{-1+\frac1 q})
e^{-\lambda_1\sigma}d\sigma
\cdot\sup\limits_{t\geq t_2^*}
\|w(\cdot,t)-\overline{w}(\cdot,t)\|_{L^2(\Omega)}\\
&\quad+C_1\int_0^\infty(1+\sigma^{-\frac3 4+\frac 1 q})
e^{-\lambda_1\sigma}d\sigma
\cdot\sup\limits_{t\geq t_2^*}\|\nabla z(\cdot,t)\|_{L^4(\Omega)}.
\end{split}
\end{equation}
Together with \eqref{3.64} and \eqref{3.37}, this implies that there exists $C_2(q)>0$ fulfilling
\begin{equation}\label{4.3}
\|\nabla z(\cdot,t)\|_{L^q(\Omega)}\leq C_2(q) \,\mbox{ for all } t>t_2^*+1.
\end{equation}
Now by the interpolation inequality, we have
  \begin{equation*}
\|\nabla z(\cdot,t)\|_{L^q(\Omega)}\leq
\|\nabla z(\cdot,t)\|^{\frac{q-2}{q-1}}_{L^{2q}(\Omega)}
\|\nabla z(\cdot,t)\|^{\frac{1}{q-1}}_{L^2(\Omega)},
\end{equation*}
which together with  \eqref{4.3} and \eqref{3.36} yields  $C_3(q)>0$ such that
 \begin{equation*}
\|\nabla z(\cdot,t)\|_{L^q(\Omega)}\leq C_3(q) e^{-\frac \kappa{2(q-1)}t}
\end{equation*}
  for any $t>t_2^*+1$.
\end{proof}

\begin{lemma}\label{lemma4.3}
Let $m_0>0$ be as in Lemma \ref{lem3.6} and suppose that (\ref{1.8}) is valid with $\int_{\Omega}n_0<m_0$. Then we have
\begin{equation}\label{4.4}
n(\cdot,t)\to\overline{n_0}\mbox{ in }L^\infty(\Omega),\quad w(\cdot,t)\to\overline{n_0}\mbox{ in }L^\infty(\Omega)
\end{equation}
exponentially as $t\to\infty$.
\end{lemma}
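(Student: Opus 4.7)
The plan is to bootstrap the weak decay information provided by Lemma~\ref{lem3.6} and Lemma~\ref{lemma4.1} into $L^\infty$-convergence by means of Duhamel's formula, the smoothing estimates for the Neumann heat semigroup (Lemma~\ref{lemma2.1}), and the convolution lemma (Lemma~\ref{lemma2.5}). As a preliminary step, I would convert the entropy decay \eqref{3.35} into $L^1$-decay via the Csisz\'ar--Kullback--Pinsker inequality $\|n-\overline{n_0}\|_{L^1}^2 \leq C\int_\Omega n\ln(n/\overline{n_0})$, and then interpolate against the $L^\infty$-bound \eqref{3.74} to obtain $\|n(\cdot,t)-\overline{n_0}\|_{L^p}\leq Ce^{-\kappa_p t}$ for every $p\in[1,\infty)$; the same interpolation strategy, combined with \eqref{3.38} and \eqref{3.77}, yields exponential $L^p$-decay of $u$.

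Next, I would write the $n$-equation in divergence form $n_t=\Delta n-\nabla\cdot F$ with $F:=nvS(\cdot,n,v)\cdot\nabla z+nu$, using $\nabla\cdot u=0$. Since $\int_\Omega(n-\overline{n_0})=0$, Duhamel's formula from some $t_0>t_2^*+1$ reads
\begin{equation*}
n(\cdot,t)-\overline{n_0}=e^{(t-t_0)\Delta}\bigl(n(\cdot,t_0)-\overline{n_0}\bigr)-\int_{t_0}^{t}e^{(t-s)\Delta}\nabla\cdot F(\cdot,s)\,ds.
\end{equation*}
The first term decays like $e^{-\lambda_1(t-t_0)}$ by Lemma~\ref{lemma2.1}(i) with $p=\infty$. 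For the Duhamel term, the key pointwise bound $|vS|\leq S_0(K)$ together with \eqref{3.74} gives $\|F(\cdot,s)\|_{L^q}\leq C\bigl(\|\nabla z(\cdot,s)\|_{L^q}+\|u(\cdot,s)\|_{L^q}\bigr)$, which is exponentially decaying thanks to Lemma~\ref{lemma4.1} and the $u$-interpolation above. Choosing $q>2$ so that $\alpha:=\tfrac12+\tfrac1q<1$, Lemma~\ref{lemma2.1}(iv) supplies the smoothing bound and Lemma~\ref{lemma2.5} collapses the convolution to an exponentially decaying quantity, yielding $\|n(\cdot,t)-\overline{n_0}\|_{L^\infty}\to 0$ exponentially.

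For $w$, I would split $w-\overline{n_0}=(w-\overline{w})+(\overline{w}-\overline{n_0})$. The ODE $\overline{w}'=\overline{n_0}-\overline{w}$ obtained by spatial integration gives $|\overline{w}(t)-\overline{n_0}|\leq Ce^{-t}$. For $\tilde w:=w-\overline{n_0}$, the equation $\tilde w_t=\Delta\tilde w-\tilde w+(n-\overline{n_0})-\nabla\cdot(uw)$ (again using $\nabla\cdot u=0$) admits the representation
\begin{equation*}
\tilde w(\cdot,t)=e^{(t-t_0)(\Delta-1)}\tilde w(\cdot,t_0)+\int_{t_0}^{t}e^{(t-s)(\Delta-1)}\bigl[(n-\overline{n_0})-\nabla\cdot(uw)\bigr](\cdot,s)\,ds.
\end{equation*}
The factor $e^{-(t-s)}$ from the $-1$ shift furnishes extra decay; using \eqref{3.76} to bound $w$ in $L^\infty$, the forcing has zero mean (since $\int u\cdot\nabla w=0$) and decays in $L^q$ at exponential rate, so another application of Lemma~\ref{lemma2.1}(i),(iv) combined with Lemma~\ref{lemma2.5} produces $\|\tilde w(\cdot,t)\|_{L^\infty}\to 0$ exponentially.

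The chief technical nuisance is the bookkeeping of rates: the rate $\kappa$ from Lemma~\ref{lem3.6} degrades under each interpolation (by the factor $1/(q-1)$ from Lemma~\ref{lemma4.1}, and again when going from $L^1$ to $L^p$ for $n$), while the singularity $(t-s)^{-1/2-1/q}$ in Lemma~\ref{lemma2.1}(iv) forces $q>2$; one must verify that these choices remain compatible. Additionally, to justify the use of Lemma~\ref{lemma2.1}(iv) at the classical-solution level, the flux $F\cdot\nu$ should vanish on $\partial\Omega$---which is ensured in the approximation \eqref{2.4} by $u_\varepsilon|_{\partial\Omega}=0$ and the compact support of $\rho_\varepsilon$, and is preserved in the limit by the chemotactic boundary condition in \eqref{1.6}.
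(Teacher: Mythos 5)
Your proposal is correct and, for the $n$-component, follows essentially the same route as the paper: Duhamel's formula for $n-\overline{n_0}$, the decomposition of the divergence-form flux $F=nvS\cdot\nabla z+nu$, the $L^\infty$-bound \eqref{3.74} to reduce $\|F\|_{L^q}$ to $\|\nabla z\|_{L^q}+\|u\|_{L^q}$, Lemma~\ref{lemma2.1}(iv) for the semigroup smoothing, Lemma~\ref{lemma4.1} for the decay of $\nabla z$, Gagliardo--Nirenberg interpolation of $u$ between $L^2$ (decaying by \eqref{3.38}) and a bounded higher norm (\eqref{3.63} or \eqref{3.77}), and finally Lemma~\ref{lemma2.5} to collapse the convolutions. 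The Csisz\'ar--Kullback--Pinsker preliminary is redundant: the first Duhamel term already decays at rate $e^{-\lambda_1 t}$ because $n(\cdot,t_0)-\overline{n_0}$ has zero mean and is uniformly bounded in $L^\infty$ by \eqref{3.74}, so no $L^1$-control of the deviation is needed beforehand, nor does it feed into any of the flux estimates. Your worry about $F\cdot\nu|_{\partial\Omega}$ is also unnecessary, since the paper's Lemma~\ref{lemma2.1}(iv) is stated for arbitrary $\omega\in(L^q(\Omega))^n$.

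Where you genuinely depart from the paper is the $w$-component. You run a second Duhamel argument for $\tilde w=w-\overline{n_0}$ with the semigroup $e^{t(\Delta-1)}$, exploiting the explicit $e^{-(t-s)}$ damping from the $-1$ shift. The paper instead avoids a second semigroup computation altogether: it writes $w-\overline{n_0}=(w-\overline w)+(\overline w-\overline{n_0})$, handles $\overline w-\overline{n_0}$ by the explicit ODE $\overline w'=\overline{n_0}-\overline w$ (your step too), and then gets $\|w-\overline w\|_{L^\infty}$ to decay purely by Gagliardo--Nirenberg interpolation between the exponentially decaying $\|w-\overline w\|_{L^2}$ from \eqref{3.37} and the uniform bound on $\nabla w$ from Lemma~\ref{lem3.8b}/\eqref{3.76} --- no Duhamel, no convolution lemma. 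Both arguments are valid; yours has the virtue of re-using the same semigroup machinery for $n$ and $w$ uniformly (and gives an independently clean rate through the $-1$ shift), whereas the paper's interpolation route is shorter given that the a priori $W^{1,\infty}$-bound on $w$ is already in hand.
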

\begin{proof}[Proof]
Applying the variation-of-constants formula for $n$ in \eqref{3.2}, we obtain from Lemma \ref{lemma2.1} that for all $t>t_2^*+2$
\begin{align}\label{4.5}
&\|n(\cdot,t)-\overline{n_0}\|_{L^\infty(\Omega)}\notag\\
&\leq\|e^{(t-t_2^*-1)\Delta}
(n(\cdot,t_2^*+1)-\overline{n_0})\|_{L^\infty(\Omega)}
+\int_{t_2^*+1}^t
\|e^{(t-s)\Delta}\nabla\cdot(nS(\cdot,n,v)v\cdot \nabla z)(\cdot,s)\|_{L^\infty(\Omega)}ds\notag\\
&\quad+\int_{t_2^*+1}^t
\|e^{(t-s)\Delta}\nabla\cdot(un)(\cdot,s)\|_{L^\infty(\Omega)}ds.
\end{align}
From  Lemma \ref{lemma2.1}, \eqref{3.74},  \eqref{4.1}  and  Lemma \ref{lemma2.5},  we infer
\begin{equation}\label{4.6}
\begin{split}
&\int_{t_2^*+1}^t
\|e^{(t-s)\Delta}\nabla\cdot(nS(\cdot,n,v)v\cdot \nabla z)(\cdot,s)\|_{L^\infty(\Omega)}ds\\
\leq &
C_1  S_0(\|v_{0}\|_{L^{\infty}(\Omega) })\int_{t_2^*+1}^t(1+(t-s)^{-\frac3 4})e^{-\lambda_1(t-s)}
\|n\nabla z(\cdot,s)\|_{L^4(\Omega)}ds\\
 \leq& C_1S_0(\|v_{0}\|_{L^{\infty}(\Omega) })\int_{t_2^*+1}^t(1+(t-s)^{-\frac3 4})
e^{-\lambda_1(t-s)}\|n(\cdot,s)\|_{L^\infty(\Omega)}
\|\nabla z(\cdot,s)\|_{L^4(\Omega)}ds\\
\leq &C_2\int_0^t(1+(t-s)^{-\frac3 4})
e^{-\lambda_1(t-s)}e^{-\frac\kappa 6 s}ds\\
\leq &C_3 e^{
-\min
\{
\frac\kappa 6, \lambda_1
\} t
}.
\end{split}
\end{equation}
In addition,
from  Lemma \ref{lemma2.1}, \eqref{3.74},  \eqref{3.38}, \eqref{3.63}  and  Lemma \ref{lemma2.5},  it follows that
\begin{equation}\label{4.7}
\begin{split}
&
\int_{t_2^*+1}^t
\|e^{(t-s)\Delta}\nabla\cdot(un)(\cdot,s)\|_{L^\infty(\Omega)}ds\\
\leq &
C_4\int_{t_2^*+1}^t(1+(t-s)^{-\frac3 4})
e^{-\lambda_1(t-s)}\|n(\cdot,s)\|_{L^\infty(\Omega)}
\|u(\cdot,s)\|_{L^4(\Omega)}ds\\
\leq &
C_5\int_{t_2^*+1}^t(1+(t-s)^{-\frac3 4})
e^{-\lambda_1(t-s)}\|u(\cdot,s)\|_{L^4(\Omega)}ds\\
\leq &
C_6\int_{t_2^*+1}^t(1+(t-s)^{-\frac3 4})
e^{-\lambda_1(t-s)}\|\nabla u(\cdot,s)\|_{L^2(\Omega)}^{\frac12}
\|u(\cdot,s)\|_{L^2(\Omega)}^{\frac12}ds\\
\leq &
C_7\int_0^t(1+(t-s)^{-\frac3 4})
e^{-\lambda_1(t-s)}e^{-\frac\kappa 2  s}ds\\
\leq &
C_8 e^{-\min
\{
\frac\kappa 2, \lambda_1
\} t
}.
\end{split}
 \end{equation}
 Inserting  \eqref{4.7} and \eqref{4.6} into \eqref{4.5},
 we obtain that for all $t>t_2^*+2$
 \begin{equation}
 \label{4.8}
\|n(\cdot,t)-\overline{n_0}\|_{L^\infty(\Omega)}\leq C_9 e^
{-\min
\{
\frac\kappa 6, \lambda_1
\} t
}.
\end{equation}

On the other hand, by the Gagliardo-Nirenberg inequality, we have
\begin{equation}\label{4.9}
\begin{split}
\|w(\cdot,t)-\overline{n_0}\|_{L^\infty(\Omega)}
&\leq\|w(\cdot,t)-\overline{w}(t)\|_{L^\infty(\Omega)}
+|\overline{w}(t)-\overline{n_0}|\\
&\leq\|\nabla w(\cdot,t)\|_{L^2(\Omega)}^{\frac3 4}
\|w(\cdot,t)-\overline{w}(t)\|_{L^2(\Omega)}^{\frac1 4}
+|\overline{w}(t)-\frac m{|\Omega|}|.
\end{split}
\end{equation}
Integrating the third equation of (\ref{3.2}) over $\Omega$ yields
\begin{equation*}
\frac d{dt}(\overline{w}-\frac m{|\Omega|})=-(\overline{w}- \frac m{|\Omega|})
\end{equation*}
and thus
\begin{equation}\label{4.10}
(\overline{w}-\frac m{|\Omega|})(t)=\bigg(\overline{w}(t_2)-\frac m{|\Omega|}\bigg)\cdot e^{-(t-t_2^*)}.
\end{equation}
Inserting \eqref{4.10}, \eqref{3.37} and \eqref{3.63b} into \eqref{4.9}, we arrive at
\begin{equation*}
w(\cdot,t)\to\overline{n_0}\mbox{ in }L^\infty(\Omega)
\end{equation*}
exponentially as $t\to\infty$.
\end{proof}

\begin{lemma}\label{lemma4.3*}
Let $m_0$  be as in Lemma \ref{lem3.6} and $\beta\in (\frac 12,1)$. There exist $\alpha>0$ and $C>0$ such that
\begin{equation}\label{4.11}
\|A^\beta u(\cdot,t)\|_{L^2(\Omega)}\leq Ce^{-\alpha t}\mbox{ for all }t>0
\end{equation}
provided that \eqref{1.8} is valid with $\int_\Omega n_0<m_0$.
\end{lemma}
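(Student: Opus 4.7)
The strategy is to apply the variation-of-constants formula to the Stokes equation, treating the buoyancy and convective terms as inhomogeneities, and then bound each term via the sectorial-semigroup estimates of Lemma \ref{lemma2.2} combined with the exponential decay information already obtained. Rewriting the $u$-equation in \eqref{1.6} as $u_t + Au = \mathcal{P}\bigl[n\nabla\Phi - (u\cdot\nabla)u\bigr]$ and applying Duhamel, we would work with
\[
A^\beta u(t) = A^\beta e^{-tA} u_0 + \int_0^t A^\beta e^{-(t-s)A}\mathcal{P}\bigl[n\nabla\Phi - (u\cdot\nabla)u\bigr](s)\,ds,
\]
and estimate the three contributions separately in $L^2(\Omega)$.

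For the initial term, the hypothesis $u_0 \in D(A^\beta)$ permits the commutation $A^\beta e^{-tA}u_0 = e^{-tA}(A^\beta u_0)$, and Lemma \ref{lemma2.2} then yields $\|A^\beta e^{-tA}u_0\|_{L^2}\leq c e^{-\mu t}\|A^\beta u_0\|_{L^2}$. For the buoyancy term, the key observation is that $\mathcal{P}\nabla\Phi = 0$ (gradients lie in the orthogonal complement of $L^2_\sigma$), so that $\mathcal{P}(n\nabla\Phi) = \mathcal{P}\bigl((n-\overline{n_0})\nabla\Phi\bigr)$. Combined with Lemma \ref{lemma4.3} (which gives $\|n-\overline{n_0}\|_{L^\infty}$ decaying exponentially) and the estimate $\|A^\beta e^{-tA}\mathcal{P}f\|_{L^2}\leq ct^{-\beta}e^{-\mu t}\|f\|_{L^2}$ from Lemma \ref{lemma2.2}, the corresponding integral is integrable near $s=t$ since $\beta<1$, and Lemma \ref{lemma2.5} then produces exponential decay.

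For the convective term, we use $\nabla\cdot u=0$ to recast $(u\cdot\nabla)u = \nabla\cdot(u\otimes u)$, and invoke a version of the Giga--Miyakawa estimate
\[
\|A^\beta e^{-tA}\mathcal{P}\nabla\cdot f\|_{L^2(\Omega)} \leq c\, t^{-\beta - \frac{1}{q}}\, e^{-\mu t}\, \|f\|_{L^q(\Omega)}.
\]
The crucial point is to pick $q > 1/(1-\beta)$ so that $\beta + 1/q < 1$ and the kernel remains integrable. For such $q$, interpolating $\|u\otimes u\|_{L^q}\leq \|u\|_{L^\infty}^{2-2/q}\|u\|_{L^2}^{2/q}$, and invoking the global $L^\infty$-bound $\|u\|_{L^\infty}\leq C$ from Lemma \ref{lem3.10} together with the exponential $L^2$-decay of $u$ provided by \eqref{3.38} (and \eqref{3.78} on $(0,t_2^*)$), yields exponential decay of $\|u\otimes u\|_{L^q}$. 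A final application of Lemma \ref{lemma2.5} collects the contributions into the claimed bound $\|A^\beta u(\cdot,t)\|_{L^2}\leq C e^{-\alpha t}$.

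The principal obstacle is precisely the convective term: because $\beta > \frac{1}{2}$, the naive semigroup estimate with $q=2$ gives the non-integrable singularity $(t-s)^{-\beta-1/2}$. Overcoming this forces us to take $q$ large, which in turn makes it essential that $\|u\|_{L^\infty}$ has already been shown to be globally bounded; without Lemma \ref{lem3.10} one would not be able to close the argument. All other ingredients reduce to routine applications of Lemma \ref{lemma2.5} to combine exponential and algebraic factors.
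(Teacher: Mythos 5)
Your overall framework — Duhamel on the Stokes equation, using $\mathcal{P}\nabla\Phi=0$ to insert $n-\overline{n_0}$, and then treating the three contributions via Lemma~\ref{lemma2.2} and Lemma~\ref{lemma2.5} — is the same as the paper's, and your handling of the initial term and of the buoyancy term is fine. The gap is in the convective term, and the difficulty you flag (``because $\beta>\frac12$ the naive estimate gives the non-integrable singularity $(t-s)^{-\beta-1/2}$'') is real, but your proposed escape route does not work. The Giga--Miyakawa-type estimate
$\|A^\beta e^{-tA}\mathcal{P}\nabla\cdot f\|_{L^2}\le c\,t^{-\beta-1/q}e^{-\mu t}\|f\|_{L^q}$
is only of that form for $q\le 2$, where it follows from the exponent $-\beta-\tfrac12-\tfrac{n}{2}(\tfrac1q-\tfrac12)$ with $n=2$; in that regime $1/q\ge 1/2$, so the singularity is worse, not better. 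For $q>2$ (the case you want) the semigroup gains nothing from the extra integrability of $f$ when the target space is $L^2$ --- one just has $\|A^\beta e^{-tA}\mathcal{P}\nabla\cdot f\|_{L^2}\le c\,t^{-\beta-1/2}e^{-\mu t}\|f\|_{L^q}$ via H\"older in the bounded domain, since the $1/2$ comes from the half-derivative in $\nabla\cdot$ and is not tunable by $q$. Thus $\beta+\tfrac12>1$ always, and you cannot make the kernel integrable by taking $q$ large.

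The paper sidesteps this by never passing to divergence form. It estimates $(u\cdot\nabla)u$ directly in $L^2$ via $\|(u\cdot\nabla)u\|_{L^2}\le\|u\|_{L^\infty}\|\nabla u\|_{L^2}$, which removes the extra half-derivative and leaves only the singularity $(t-s)^{-\beta}$, integrable since $\beta<1$. The price is that $\|u\|_{L^\infty}$ now appears; rather than invoking the uniform $L^\infty$-bound, the paper interpolates $\|u\|_{L^\infty}\le C\|A^\beta u\|_{L^2}^\theta\|u\|_{L^2}^{1-\theta}$ with $\theta=\tfrac{r}{2(r-1)}\in(0,1)$ for suitable $r>2$, so the unknown $\|A^\beta u\|_{L^2}$ re-enters only with a sublinear power. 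Setting $M(t)=e^{\alpha t}\|A^\beta u(\cdot,t)\|_{L^2}$ and feeding the integral inequality through a finite-horizon supremum, one obtains $\sup M\le C+C\sup M^\theta$, which is closed precisely because $\theta<1$. In short: to handle $\beta>\tfrac12$ you must both avoid the divergence-form loss of a half derivative and accept a self-referential inequality, closed via the sublinear exponent — the global $L^\infty(\Omega)$-bound on $u$ from Lemma~\ref{lem3.10} alone is not the mechanism that saves the argument.
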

\begin{proof}[Proof] For any $T\geq t_2^*+3$ and $t_2^*<t< T$, we define
$M(t):=e^{\alpha t}\|A^\beta u(\cdot,t)\|_{L^2(\Omega)}$ with $\alpha:=\min\{\mu,\lambda_1,\frac \kappa 6\}$.
Applying the Helmholtz projector $\mathcal{P}$ to the fourth equation in (\ref{3.2}), we have
\begin{equation}\label{4.12}
u_{t}+A u=-\mathcal{P}[(u\cdot\nabla)u]+\mathcal{P}[n\nabla\Phi].
\end{equation}
Now the variation-of-constants formula associated with \eqref{4.12} represents $u$ according to
\begin{equation}\label{4.13}
u(\cdot,t)=e^{-(t-t_2^*)A}u(\cdot,t_2^*)
+\int_{t_2^*}^te^{-(t-s)A}\mathcal{P}[(n-\overline {n_0})\nabla\Phi-(u\cdot\nabla)u](\cdot,s)ds
\end{equation}
for all $t>t_2^*$, where $\mathcal{P} [\overline {n_0}\nabla\Phi]=0$ is used.

From Lemma \ref{3.8}, it follows that there is $C_1>0$ warranting
\begin{equation}\label{4.14}
\|A^\beta u(\cdot,t)\|_{L^2(\Omega)}\leq C_1\leq  C_2 e^{-\mu t}
\end{equation}
with $C_2:= C_1e^ {\mu(t_2^*+1)}$ for all $t_2^*\leq t\leq t_2^* +1$.

Given $\beta\in (\frac 12,1)$, we can choose $r>2$ suitably close to 2 such that $\frac 1 r\in (1-\beta, \frac 12)$
 and then fix $\theta:=\frac r{2(r-1)} \in(0,1)$.
  Using \cite[Lemma 3.1]{Winkler(cvpde2015)} and the Gagliardo--Nirenberg inequality, we obtain $C_3>0$ and $C_4>0$ such that
  $$
  \|\varphi\|_{L^\infty(\Omega)}\leq C_3\|\varphi\|^\theta_{W^{1,r}(\Omega)} \|\varphi\|_{L^2(\Omega)}^{1-\theta}
  \leq C_4\| A^\beta \varphi\|^\theta_{L^{2}(\Omega)} \|\varphi\|_{L^2(\Omega)}^{1-\theta}
  $$
for all $\varphi \in D(A^\beta) \cap W^{1,r}(\Omega)$, and hence
\begin{equation}\label{4.15}
\begin{split}
\|(u\cdot \nabla) u\|_{L^2(\Omega)}\leq &\|u(\cdot,t)\|_{L^\infty(\Omega)}\|\nabla u(\cdot,t)\|_{L^2(\Omega)}\\
\leq &C_4\| A^\beta u\|^\theta_{L^{2}(\Omega)} \|u\|_{L^2(\Omega)}^{1-\theta}\|\nabla u\|_{L^2(\Omega)}.
\end{split}\end{equation}

As for $t>t_2^*+1$, we invoke Lemma \ref{lemma2.2} to estimate the integrals on the right of \eqref{4.13} as follows:
\begin{align}\label{4.16}
&\|A^\beta u(\cdot,t)\|_{L^2(\Omega)}\notag\\
&\leq C_5(t-t_2^*)^{-\beta}e^{-\mu(t-t_2)}
\|u(\cdot,t_2^*)\|_{L^2(\Omega)}
+\int_{t_2^*}^t
\|A^\beta e^{(t-s)A}
\mathcal{P}[(n-\overline {n_0})\nabla\Phi]\|_{L^2(\Omega)}ds\notag\\
&\quad+\int_{t_2^*}^t
\|A^\beta e^{(t-s)A}
\mathcal{P}[(u\cdot\nabla)u]\|_{L^2(\Omega)}ds\\
&\leq C_5(t-t_2^*)^{-\beta}\|u(\cdot,t_2)\|_{L^2(\Omega)} e^{-\mu(t-t_2)}
+C_6\int_{t_2^*}^t(t-s)^{-\beta}
e^{-\mu(t-s)}
\|\mathcal{P}[(n-\overline {n_0})\nabla\Phi]\|_{L^2(\Omega)}ds \notag\\
&\quad+C_6\int_{t_2^*}^t(t-s)^{-\beta}
e^{-\mu(t-s)}
\|\mathcal{P}[(u\cdot\nabla)u]\|_{L^2(\Omega)}ds \notag\\
&\leq C_7 e^{-\mu t}
+C_7\int_{t_2^*}^t(t-s)^{-\beta}
e^{-\mu(t-s)}\|n-\overline{n_0}\|_{L^2(\Omega)}ds +C_6\int_{t_2^*}^t(t-s)^{-\beta}
e^{-\mu(t-s)}\|(u\cdot\nabla)u\|_{L^2(\Omega)}ds. \notag
\end{align}
Here we  use  \eqref{4.8} and Lemma \ref{lemma2.5} in estimating
\begin{equation}\label{4.17}
\int_{t_2^*}^t(t-s)^{-\beta}
e^{-\mu(t-s)}\|n-\overline{n_0}\|_{L^2(\Omega)}ds \leq
 C_8 e^
{-\alpha
t},
\end{equation}
whereas for the last term  in  \eqref{4.16}, we use \eqref{4.15}, \eqref{3.63} and \eqref{3.38} to  estimate
\begin{equation}\label{4.18}
\begin{split}
&\int_{t_2^*}^t(t-s)^{-\beta}
e^{-\mu(t-s)}
\|(u\cdot\nabla)u\|_{L^2(\Omega)}ds \\
\leq
&C_9\int_{t_2^*}^t(t-s)^{-\beta}
e^{-\mu(t-s)}
\| A^\beta u(\cdot,s)\|^\theta_{L^{2}(\Omega)} \|u(\cdot,s)\|_{L^2(\Omega)}^{1-\theta} ds
\\
\leq &
 C_9\sup_{t_2^*<t<T} M^\theta(t) \cdot \int_{t_2^*}^t (t-s)^{-\beta}
e^{-\mu(t-s)} e^{-\alpha s} ds\\
\leq &
 C_{10} e^{-\alpha t}\cdot \sup_{t_2^*<t<T} M^\theta(t).
\end{split}
\end{equation}
Therefore a combination of  \eqref{4.14},  \eqref{4.16}--\eqref{4.18} yields
$$
\|A^\beta u(\cdot,t)\|_{L^2(\Omega)}
\leq  C_{11}  e^{-\alpha t} + C_{11} e^{-\alpha t}   \sup_{t_2^*<t<T} M^\theta(t).
$$
Due to $\theta<1$, we infer the existence of $C_{12}>0$ such that
 $M(t)\leq C_{12}$, which leads to \eqref{4.11} readily.
\end{proof}

We are now in the position to complete the proof of Theorem 1.1.

{\bf Proof of Theorem 1.1.}   \rm
In light of  Lemmas \ref{lemma4.1},  \ref{lemma4.3} and \ref{lemma4.3*}, to establish the exponential stabilization of the solution to \eqref{1.6}, it suffices to show
the decay property of  $v$ and $\|\nabla z(\cdot,t)\|_{L^\infty(\Omega)}$.
From the decay property of $w-m$ in \eqref{4.4}, it follows that one can find $t_1>0$ sufficiently large such that for all $x \in \Omega $  and $t\geq t_1$,
 $w(x,t)>\frac {3m} 4 $ and thereupon
$$
v_t\leq \triangle v-u\cdot\nabla v-\frac {3m} 4 v.
 $$
 Hence, by means of a straightforward comparison with the ODE
$y'(t)=-\frac {3m} 4 y$ where $y(t_1)=\|v_{0}\|_{L^{\infty}(\Omega)}$,
 we see that
 \begin{equation}\label{4.19}
	\begin{split}	
 v(x,t)\leq  \|v_{0}\|_{L^{\infty}(\Omega)} e^{-\frac {3m} 4 (t-t_1)}.
	\end{split}
\end{equation}

In addition, thanks to \eqref{3.7} and  \eqref{3.64},   the Gagliardo--Nirenberg inequality provides $C_1>0$ with
$$
\|z(\cdot, t)\|_{L^2(\Omega)}\leq C_{GN}\|\nabla z(\cdot,t)\|^{\frac 25}_{L^4(\Omega)}\| z(\cdot,t)\|^{\frac 35}_{L^1(\Omega)(\Omega)}+C_{GN}
\| z(\cdot,t)\|_{L^1(\Omega)}\leq C_1(t+1).
$$
On the  other hand, we also have
\begin{equation}\label{4.9}
\|w(\cdot,t)-\overline{w}(t)\|_{L^6(\Omega)}
\leq \|\nabla w(\cdot,t)\|_{L^2(\Omega)}^{\frac23}
\|w(\cdot,t)-\overline{w}(t)\|_{L^2(\Omega)}^{\frac13}.
\end{equation}
Therefore similarly reasoning as in \eqref{4.2},  applying \eqref{4.1} to $q=6$, Lemmas \ref{lemma4.3} and \ref{lemma2.5}, we can conclude that
there exist $C_i>0\, (i=2,3,4)$ such that
	\begin{equation*}\label{5.11}
		\begin{split}
			&\|\nabla z(\cdot,t)\|_{L^\infty(\Omega)}\\
\leq &C_2(1+(t-t_2^*)^{-1}) e^{-\lambda_1(t-t_2^*)} \|z(\cdot, t_2^*)\|_{L^2(\Omega)}
+
\int_{t_2^*}^{t}
\| \nabla e^{(t-s)\Delta}|\nabla z|^2(\cdot,s)\|_{L^\infty(\Omega)}ds\\
&+\int_{t_2^*}^{t}
\|\nabla e^{(t-s)\Delta} (w(\cdot,s)-\overline w(s))\|_{L^\infty(\Omega)}ds
+\int_{t_2^*}^t\|\nabla e^{(t-s)\Delta} (u\cdot\nabla z)\|_{L^\infty(\Omega)}ds\\
\leq &2 C_2 e^{-\lambda_1(t-t_2^*)}
+C_2\int_{t_2^*}^{t}(1+(t-s)^{-\frac 23}) e^{-\lambda_1(t-s)}\|\nabla z(\cdot,s)\|^2_{L^6(\Omega)}ds\\
			 &	
+ C_2\int_{t_2}^{t}(1+(t-s)^{-\frac 23})e^{-\lambda_1(t-s)}
\|w(\cdot,s)-\overline w(s)\|_{L^6(\Omega)}ds\\
&+ C_2\int_{t_2^*}^{t}(1+(t-s)^{-\frac 23})e^{-\lambda_1(t-s)}
\|u(\cdot,s)\|_{L^\infty(\Omega)}\|\nabla z(\cdot,s)\|_{L^6(\Omega)}
ds\\
		\leq	&	
2 C_2 e^{-\lambda_1(t-t_2^*)}
+C_3
\int_{t_2^*}^{t}(1+(t-s)^{-\frac 23}) e^{-\lambda_1(t-s)} e^{-2\kappa s}ds\\ 
&+C_3
\int_{t_2^*}^{t}(1+(t-s)^{-\frac 2 3}) e^{-\lambda_1(t-s)} e^{-\frac{\kappa} 6 s}ds\\
\leq& C_4
 e^{-\min\{
 \lambda_1,\frac \kappa 6\} t}
 		\end{split}
	\end{equation*}
for all $t>t_2^*+2$, where Lemma 2.4 is used.
This completes the proof of the main result of this paper.

\vspace{12pt}

{\bf Acknowledgments}:
This work was partially supported by the National Natural Science Foundation of China (no.~12071030; third author) and NUS Academic Research Fund (no.~A-0004282-00-00; second and third authors). The second author also acknowledges the hospitality of the Fields Institute for Research in Mathematical Sciences in Toronto where part of this research was carried out.

\end{document}